\newtheorem{thm}{Theorem}
\newtheorem{prop}[thm]{Proposition}
\newtheorem{lemma}[thm]{Lemma}
\newtheorem{cor}[thm]{Corollary}
\theoremstyle{definition}
\newtheorem{defn}{Definition}
\newtheorem{ex}{Example}
\theoremstyle{remark}
\newtheorem{remark}{Remark}
\numberwithin{equation}{section}
\def\R{\mathbb{R}}
\def\Z{\mathbb{Z}}
\begin{document}
%\title[]{Pseudospherical Twisted Columns}
\title[]{Topologically embedded pseudospherical cylinders}
%\title[]{Helicoidal pseudospherical embedded cylinders}

\author{Emilio Musso}
\address{(E. Musso) Dipartimento di Scienze Matematiche, Politecnico di Torino,
Corso Duca degli Abruzzi 24, I-10129 Torino, Italy}
\email{emilio.musso@polito.it}

\author{Lorenzo Nicolodi}
\address{(L. Nicolodi) Di\-par\-ti\-men\-to di Scienze Ma\-te\-ma\-ti\-che, Fisiche e Informatiche,
Uni\-ver\-si\-t\`a di Parma, Parco Area delle Scienze 53/A, I-43124 Parma, Italy}
\email{lorenzo.nicolodi@unipr.it}

\thanks{Authors partially supported by MIUR (Italy) under the PRIN project
\textit{Variet\`a reali e complesse: geometria, topologia e analisi armonica};
and by the GNSAGA of INDAM}

\subjclass[2000]{53C50, 53A30}

%\date{Version of May 15, 2017}
%\date{Version of \today}

%\dedicatory{}

\keywords{Pseudospherical surfaces, sine-Gordon equation, traveling wave solutions, pseudospherical helicoids, topologically embedded pseudospherical cylinders}

\begin{abstract}
The class of traveling wave solutions of the
sine-Gordon equation is known to be in 1-1 correspondence with the class of (necessarily singular)
pseudospherical surfaces in Euclidean space with screw-motion symmetry: the pseudospherical helicoids.
We
%obtain a systematic procedure to
explicitly describe all pseudospherical helicoids
in terms of elliptic functions.
This solves a problem posed by Popov \cite{Po}.
As an application, countably many
continuous families
%of new examples of
of topologically embedded pseudospherical helicoids are constructed.
%
%%%%%%%%%%%%%%%%
%
A (singular) pseudospherical helicoid
%$S\subset \R^3$
is proved to be either
a dense subset of a region bounded by two coaxial cylinders, a topologically
immersed cylinder with helical self-intersections, or a topologically embedded cylinder
with helical singularities, called for short a pseudospherical twisted column.
%
%The pseudospherical helicoids which are
%topologically embedded cylinders (pseudospherical twisted columns)
Pseudospherical twisted columns
are characterized
by four phenomenological invariants: the helicity $\eta\in \mathbb{Z}_2$,
the parity $\epsilon\in \Z_2$, the wave number $\mathfrak n\in \mathbb{N}$, and the aspect
ratio $\mathfrak{d}>0$, up to translations along the screw axis.
A systematic
%constructive iterative
procedure for explicitly
determining all pseudospherical twisted columns from the invariants is provided.
%The theoretical framework of our approach is based in the method of moving frames.
\end{abstract}

\maketitle

\section{Introduction}\label{s:intro}

The study of {\em pseudospherical surfaces}, i.e., surfaces in $\R^3$ with constant Gauss curvature $K = -1$,
is a classical subject in differential geometry dating back to the second half of the 19th century \cite{Bi,Ei}.
The renewed interest in the subject is
principally due to the fact that pseudospherical surfaces constitute an integrable system
governed by the sine-Gordon equation
\begin{equation}\label{sge}
\phi_{ss} -\phi_{tt}=\sin \phi
\end{equation}
\cite{BPT,CaliniIvey,Olverbook,PaBAMS,RSBook,Sa,Te,TU}.
%(classically known as ``the fundamental equation of differential geometry'').
Equation \eqref{sge}
amounts to the integrability conditions (Gauss--Codazzi equations)
for the linear system of Gauss--Weingarten obeyed by the
tangent frame to a pseudospherical surface with respect to curvature line coordinates.
It is well-known that there is a one-to-one
correspondence between local solutions $\phi$ of the sine-Gordon equation
with $0<\phi < \pi$ and local pseudospherical surfaces up to rigid motion.
Although the sine-Gordon
equation has many global solutions defined on the whole $\R^2$, the corresponding surfaces always have
singularities. In fact,
according to the proof of Hilbert's theorem asserting that there is no complete
immersed pseudospherical surface in $\R^3$,
any smooth solution $\phi : \R^2 \to \R$ of the sine-Gordon
equation attains values that are multiples of $\pi$ \cite{Hilbert,PoMa}. This implies
that the map $f_\phi : \R^2 \to \R^3$ corresponding to $\phi$ is smooth but fails to be an immersion at points $p$
where $\sin \phi(p) =0$, i.e., when
$\phi = k\pi$, for some integer $k$.
%At points where $\phi = k\pi$,
At these points
the map $f_\phi$
%is still smooth but
has rank 1.
It then follows that there is a one-to-one correspondence between smooth solutions $\phi : \R^2 \to \R$
of the sine-Gordon equation and smooth maps $f_\phi : \R^2 \to \R^3$ such that
(1) $\text{rank} f_\phi \geq1$ everywhere; and (2)
if $\text{rank} f_\phi =2$ on an open set $U$ of $\R^2$, then ${f_\phi}_{|U}$ is a pseudospherical
immersion (see \cite{PTbook,Po,RSBook,TU} and the references therein).

\vskip0.1cm

%%%%%%%%%%%%%%%%%

In his 2014 monograph \cite[Chapter 3]{Po}, Andrey Popov
%indicated as an open problem
posed the problem of
the explicit
description and computation of the (necessarily singular) pseudospherical surfaces corresponding to
the stationary traveling wave solutions (single-phase solutions) of the sine-Gordon equation
\cite{Bi,RMcL,Ov,Po,RSBook}.
%which contain in particular the $n$-soliton solutions
%(see \cite{Bi} and \cite{RMcL,Ov,Po,RSBook}).
Such surfaces are known to be the pseudospherical surfaces with screw-motion symmetry:
the {\em pseudospherical helicoids} \cite{Bi, Ei}.
Examples of pseudospherical helicoids include the well-known Dini helicoids
%(containing the pseudosphere)
which contain the pseudosphere and
correspond
to the 1-soliton solutions of the sine-Gordon equation.
%
%In this paper, we study the global geometry of {pseudospherical
%helicoids} and answer the question posed by A. Popov.
%
In this paper, we explicitly determine all pseudospherical helicoids
and investigate their global geometry. This leads, in particular, to
the discovery of
countably many continuous families of topologically embedded examples of singular
pseudospherical helicoids, namely topologically embedded pseudospherical cylinders.
%
%explicitly determine all of them.
%solve the problem posed by A. Popov.
Our study is related to recent work on pseudospherical surfaces with
singularities \cite{Brander,Po}
and modern applications of the geometry of pseudospherical helicoids
to the study of modulated wave solutions of certain mathematical models
in nonlinear elasticity \cite{RS1,RSW,RSZ}.
%
%Another important motivation was the search for explicit topologically embedded examples of singular
%pseudospherical helicoids.

%%%%%%%%%%%%%%%%%

%%%%%%%%%%%%%%%%
%In this paper, we study the global geometry of the class of {\em pseudospherical
%helicoids}:
%%of pseudospherical surfaces: that
%the class of pseudospherical surfaces with screw-motion symmetry
%%that of {\em pseudospherical helicoids}
%\cite{Bi,Ei}.
%%
%Pseudospherical helicoids correspond to the class of stationary traveling wave solutions
%(single-phase solutions) of
%the sine-Gordon equation, which in particular contain the $n$-soliton solutions
%(see \cite{Bi} and \cite{RMcL,Ov,Po,RSBook}).
%Examples of pseudospherical helicoids include the well-known Dini helicoids which
%contain the pseudosphere and correspond
%to the 1-soliton solutions of the sine-Gordon equation.
%%
%Our study was motivated by recent work on pseudospherical surfaces with
%singularities \cite{Brander,Po}
%and modern applications of pseudospherical helicoids
%to the study of modulated wave solutions of certain mathematical models
%in nonlinear elasticity \cite{RS1,RSW,RSZ}.
%%
%Another important motivation was the search for explicit topologically embedded examples of singular
%pseudospherical helicoids.
%%%%%%%%%%%%%

\vskip0,1cm
While the local geometry of pseudospherical helicoids was
well understood already by the end of the 19th century \cite{Bi},
little is known about their global geometry (see \cite{Po} for an updated overview).
It is our purpose to address natural global questions such as
(1) find explicit expression for the (necessarily singular) map $f_\phi$
corresponding to a stationary traveling wave solution $\phi$ of the sine-Gordon equation;
and (2) construct examples of singular pseudospherical helicoids
which are topologically embedded cylinders,
other than
%the known examples of
the pseudosphere
and the rotation pseudospherical surfaces of cylindrical type.
Starting from a single-phase solution $\phi$ to the sine-Gordon equation,
we will explicitly compute,
in terms of elliptic functions,
the map $f_\phi$ that provides
a global parametrization of the pseudospherical helicoid corresponding to $\phi$.
With this in hand, we will prove the existence and construct continuous families of
pseudospherical helicoids which are topologically embedded cylinders
and have helical singularities.
Although rather simple from a conceptual point of view, the above geometric problems
present a certain computational difficulty due to the presence of the singularities
and the fact that
the single-phase solutions of the sine-Gordon equation are in
general transcendental elliptic functions. This requires an additional detailed
study of the properties of Jacobian functions
which is an interesting topic on its own
\cite[\S3.3.3]{Po}.
The theoretical approach to the problem is based on the method of moving frames
\cite{Olver1,Olver2,JMNbook}.

As for constant mean curvature (CMC) surfaces, we mention that
helicoidal CMC surfaces were studied by do Carmo and
Dajczer \cite{dCaDa} and further by Roussos \cite{Roussos} and Perdomo \cite{Perdomo}, among others.
Such surfaces, and in particular helicoidal CMC cylinders,
% and their closing conditions
were also studied by Burstall and Kilian \cite{BuKi} using
methods from the theory of equivariant harmonic maps and integrable systems.
Notice that non-rotational helicoidal CMC surfaces are always nonsingular
and
%, with the exception of Delaunay unduloids \cite{Delaunay},
%\footnote{which are embedded CMC surfaces of revolution,}
have self-intersections along helices, which implies they can never be embedded.

\subsection{Description of results}
We will now briefly describe our main results.
A {\it helicoid}, or a {\it helicoidal surface}, is a surface $S$ in $\R^3$,
possibly with singularities, swept out by an appropriately chosen planar curve $\Gamma$
%, possibly singular,
animated by a helicoidal motion around an oriented line.
The orbits of this motion (helices) through the initial profile curve foliate the
helicoid.
A helicoid $S$ is of {\it translational kind} if its profile curve $\Gamma$
is simple,
has a symmetry group consisting of
a nontrivial group of translations along
the axis of the helicoidal motion,
%
%is invariant under a non-trivial group of translations along
%the axis of the helicoidal motion,
%
and has only ordinary cusps as possible singular points (cf. Figures \ref{FIG4} and \ref{FIG6}).
%
%%%%%%%%%%%%%%%%%%%%%%%%%
Associated with a translational helicoid there are four main phenomenological invariants:
the {\it helicity} $\eta\in \Z_2$;
the {\it parity} $\epsilon \in \Z_2$; the {\it wave number} $\mathfrak{n}\in \R^+$; and the
{\it aspect ratio} $\mathfrak{d}\in \R^+$ (cf. Section \ref{s:pre} for the precise definitions).\footnote{Here
$\mathbb Z_2 = \{\pm 1\}$ and $\R^+$ stands for
the set of positive reals.}
Depending on whether the wave number $\mathfrak{n}$ is an irrational, a rational, or a
natural number, a translational helicoid $S$ is a dense subset of a region of $\R^3$ bounded by two coaxial cylinders,
a topologically immersed cylinder with helical self-intersections, or a topologically embedded
cylinder, that is, a closed subset of $\R^3$ homeomorphic to $S^1\times \R$.
The helicoids of the last class are called {\it twisted columns}.
The first main result of the paper will be the following.

\begin{thm}\label{thm:main-1}
All pseudospherical helicoids are of translational kind.
\end{thm}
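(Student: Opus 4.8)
The plan is to reduce the statement to three properties of the profile curve $\Gamma$ of an arbitrary pseudospherical helicoid, exactly matching the definition of \emph{translational kind}: that $\Gamma$ is simple, that it is invariant under a nontrivial discrete group of translations along the screw axis, and that its only singular points are ordinary cusps. To get hold of $\Gamma$, I would first use the correspondence recalled in the Introduction: a pseudospherical helicoid is the image $f_\phi$ of a stationary traveling wave solution $\phi$ of \eqref{sge}. Writing $\phi(s,t)=\psi(\xi)$ for a single phase $\xi=as+bt$ compatible with the screw motion, equation \eqref{sge} collapses to the pendulum-type ODE $(a^2-b^2)\,\psi''=\sin\psi$, with first integral $\tfrac12(a^2-b^2)(\psi')^2=E-\cos\psi$. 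Thus, away from the degenerate separatrix values $E=\pm1$, the phase $\psi$ is a Jacobi elliptic function of $\xi$ with an explicit real period $2T=2T(E)$.

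Next I would make the screw symmetry explicit by passing to coordinates $(u,v)$ adapted to the helicoidal motion, so that the $v$-lines are the orbiting helices and the $u$-lines recover the planar profile $\Gamma(u)=(r(u),z(u))$ in a meridian half-plane. Integrating the Gauss--Weingarten equations along $u$ --- the computation singled out in the Introduction as the technical core --- expresses $r(u)$ and $z(u)$ as explicit quadratures in $\psi$ and the trigonometric functions of the rotation angle. From these formulas I would read off two structural facts. First, the axial coordinate satisfies $z'\ge 0$ with equality only at isolated points, so that $z$ is strictly monotone along $\Gamma$ (as already happens for the tractrix profile of the pseudosphere); a curve with a strictly monotone coordinate cannot self-intersect, which gives simplicity. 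Second, when $\xi$ advances by the period $2T$ of $\psi$, the radius $r$ returns to its value while $z$ increases by a fixed constant $\Delta z>0$, so $\Gamma(u+L)=\Gamma(u)+(0,\Delta z)$. This exhibits the required nontrivial translation symmetry and, along the way, pins down the aspect ratio $\mathfrak d$ and the wave number $\mathfrak n$.

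It remains to analyze the singular points. Since the helix direction $\partial_v f_\phi$ never degenerates, the non-immersion points are confined to the profile direction and occur exactly where $\sin\psi=0$, i.e. $\psi\equiv0\pmod\pi$; at such a parameter $u_0$ one has $\Gamma'(u_0)=0$. Here the first integral is decisive: at $\psi=k\pi$ it gives $(\psi')^2=2\,(E-(-1)^k)/(a^2-b^2)$, which is nonzero precisely outside the separatrix --- and on the separatrix no such points are reached at all, corresponding to the smooth Dini/pseudosphere regime. Differentiating the quadratures for $r$ and $z$ repeatedly by means of the ODE, I would then compute the Taylor expansion of $\Gamma$ at $u_0$ and check that $\Gamma''(u_0)\ne0$ while $\Gamma''(u_0)$ and $\Gamma'''(u_0)$ are linearly independent; this is exactly the condition for an ordinary (semicubical) cusp, with the cusp tangent pointing radially so that $z'$ has a double zero there, consistent with the monotonicity used above. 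Assembling simplicity, translation symmetry, and the cusp property shows that $\Gamma$ has the form required, hence every pseudospherical helicoid is of translational kind.

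I expect the main obstacle to be the cusp analysis together with the bookkeeping of the elliptic period. Proving that every non-immersion point is an \emph{ordinary} cusp, rather than a higher-order singularity, requires the precise local expansions of the quadratures for $r$ and $z$ near the zeros of $\sin\psi$, and this is where the finer properties of the Jacobi functions enter. Likewise, controlling the sign of $z'$ and evaluating the exact increment $\Delta z$ over a period rests on a careful study of the period integral, which must be carried out uniformly across the librational and rotational regimes and for both signs of $a^2-b^2$.
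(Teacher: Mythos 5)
Your proposal is correct in outline and follows essentially the same route as the paper: there the pendulum reduction is integrated explicitly in Jacobi elliptic functions via moving frames (Theorems \ref{R2} and \ref{R3}), and Lemmas 8 and 9 then unwind the screw motion to the planar profile and verify precisely your three points --- strict monotonicity of the axial coordinate (simplicity), elliptic (quasi-)periodicity (the discrete translation group and the wavelength), and ordinary cusps at the parameters where $\mathrm{sn}$ vanishes --- separately in the magnetic ($\mu>1$) and electric ($\mu\in(0,1)$) regimes, with the branch bookkeeping you anticipate absorbed into the analytic angle determination $\theta_{\mu,r}$. One side remark of yours is inaccurate, though immaterial since the case is excluded by the paper's standing assumption $\mu\neq 1$, $r\neq 0$: on the separatrix the kink solution still passes through $\psi=0$ with $\psi'\neq 0$, so the Dini surfaces are not singularity-free (the pseudosphere has a cuspidal edge); they are excluded from the theorem not because they are smooth, but because their tractrix-like profile has no translational symmetry.
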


Our second main result characterizes pseudo\-spherical twisted columns
in terms of the four phenomenological invariants and
provides an effective method for explicitly constructing all of them. We will prove the
following.

\begin{thm}\label{thm:main}
For given $(\eta,\epsilon,\mathfrak{n},\mathfrak{d})$ $\in$
$\mathbb{Z}_2\times \mathbb{Z}_2\times \mathbb{N}\times \R^+$,
there exists a unique congruence class of pseudospherical twisted columns
with
{\em helicity} $\eta$, {\em parity} $\epsilon$, {\em wave number}
 $\mathfrak{n}$, and {\em aspect ratio} $\mathfrak{d}$.
\end{thm}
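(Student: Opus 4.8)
The plan is to reduce the statement to an explicit inversion problem for the parametrization of pseudospherical helicoids by single-phase solutions of the sine-Gordon equation, and then to separate the discrete invariants $(\eta,\epsilon)$ from the continuous/arithmetic data $(\mathfrak{n},\mathfrak{d})$. First I would fix the explicit moving-frames parametrization $f_\phi$ of a pseudospherical helicoid in terms of a traveling-wave solution $\phi$. Writing $\phi(s,t)=\psi(\xi)$ with $\xi$ the traveling coordinate reduces the sine-Gordon equation to a pendulum-type ODE whose first integral is $(\psi')^2=\tfrac{2}{\mu}(C-\cos\psi)$, so that $\psi$ is a Jacobi amplitude function depending on an elliptic modulus $k\in(0,1)$ together with the screw parameters (pitch and angular rate). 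By Theorem \ref{thm:main-1} the resulting surface is always of translational kind, hence carries the four invariants $(\eta,\epsilon,\mathfrak{n},\mathfrak{d})$. The discrete invariants are the easy ones: the helicity $\eta\in\Z_2$ is the handedness of the screw motion (the sign of the pitch relative to the rotation) and the parity $\epsilon\in\Z_2$ is the discrete symmetry type of the profile; both are realized by the obvious sign choices in the frame and are preserved under the direct isometries defining congruence. It therefore suffices to fix $(\eta,\epsilon)$ and analyze the remaining continuous parameters.

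Second I would compute $\mathfrak{n}$ and $\mathfrak{d}$ explicitly as functions of the continuous parameters. The period of the profile curve $\Gamma$ is a complete elliptic integral (a multiple of $K(k)$), and the angular holonomy of the screw motion accumulated over one such period is another elliptic integral; the wave number $\mathfrak{n}$ is, up to normalization, the ratio of $2\pi$ to this angular increment, while the aspect ratio $\mathfrak{d}$ is the ratio of the radial width of $\Gamma$ to the pitch. This produces a map $P\colon(k,\text{pitch})\mapsto(\mathfrak{n},\mathfrak{d})$ into $\R^+\times\R^+$. The core analytic claim is that $P$ is a homeomorphism onto $\R^+\times\R^+$: a monotonicity argument in $k$, using the standard monotonicity and the limits $K(k)\to\pi/2$ as $k\to 0$ and $K(k)\to\infty$ as $k\to 1$ (and likewise for $E(k)$), shows that $\mathfrak{n}$ sweeps out all of $\R^+$, while the scaling freedom in the pitch lets $\mathfrak{d}$ range over all of $\R^+$; non-vanishing of the Jacobian of $P$ upgrades local to global invertibility. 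The twisted columns are exactly the topologically embedded helicoids, which by the trichotomy recalled just before the theorem occur precisely when $\mathfrak{n}\in\mathbb{N}$; restricting $P^{-1}$ to $\mathbb{N}\times\R^+$ then returns, for each prescribed $(\mathfrak{n},\mathfrak{d})\in\mathbb{N}\times\R^+$, a unique choice of continuous parameters.

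Existence now follows by assembling the fixed $(\eta,\epsilon)$ with the parameters returned by $P^{-1}(\mathfrak{n},\mathfrak{d})$ and forming $f_\phi$; uniqueness follows from the rigidity built into the moving-frames construction. Indeed, two pseudospherical twisted columns with identical $(\eta,\epsilon,\mathfrak{n},\mathfrak{d})$ arise from the same continuous parameters, by injectivity of $P$, and from the same discrete data, hence from the same single-phase solution $\phi$ up to a translation of $\xi$ and the symmetries already accounted for by $(\eta,\epsilon)$. Since $f_\phi$ is reconstructed from $\phi$ by integrating the Gauss--Weingarten system, whose solution is unique up to the choice of initial frame, the two surfaces differ by a direct isometry of $\R^3$ together with a translation along the screw axis, which is exactly the congruence asserted. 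I expect the main obstacle to be precisely the bijectivity of $P$: establishing the strict monotonicity and the exact limiting values of the elliptic integrals defining $\mathfrak{n}$ and $\mathfrak{d}$, so as to guarantee both that every natural number is attained as a wave number and that the fibers are singletons, is the delicate transcendental part flagged in the introduction as the detailed study of Jacobian functions.
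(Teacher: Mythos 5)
Your outline follows the paper's broad strategy (express the invariants as elliptic-integral functions of the traveling-wave parameters, then invert onto $\mathbb{N}\times\R^+$), but two of your key steps are genuinely wrong. First, the parity $\epsilon$ is not ``realized by the obvious sign choices in the frame'' and cannot be fixed independently of the continuous parameters: in the paper it is the number of cusps (mod $2$) in a fundamental domain of the planar profile, and Lemmas 8 and 9 show it is \emph{forced} by which side of the pendulum separatrix the solution lies on --- magnetic type ($\mu>1$, rotating pendulum) always has $\epsilon=-1$, electric type ($\mu\in(0,1)$, librating pendulum) always has $\epsilon=+1$, with structurally different formulas for $\mathfrak{w}$ and $\mathfrak{d}$ (different arguments in the elliptic integrals of the third kind, and the aspect ratio referred to the inner radius in one case and the outer radius in the other). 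So realizing both values of $\epsilon$ requires running the entire existence-and-uniqueness argument twice, once on each parameter region; your single map $P$ with $(\eta,\epsilon)$ ``fixed in advance'' does not capture this, and it is precisely why the paper needs two separate modular-curve lemmas (Lemmas 10 and 11).

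Second, your surjectivity argument for $\mathfrak{d}$ rests on ``the scaling freedom in the pitch,'' but no such freedom exists: the constraint $K=-1$ fixes the scale of $\R^3$, so the pitch is itself a transcendental function of the two parameters $(\mu,r)$ (namely $\mathfrak{p}_{\mu,r}=2\pi\cosh(r)\sinh(r)/(m+\sinh^2(r))$), not an independently tunable parameter. Relatedly, non-vanishing of the Jacobian of $P$ does not upgrade local to global invertibility without an additional properness or monotonicity input. The paper circumvents both problems by an explicit elimination: the condition $\mathfrak{p}=n\mathfrak{w}$ together with $\mathfrak{d}_{\mu,r}=d$ yields $r=\varrho_{n,d}(m)$ in closed form, and substituting back reduces everything to a single one-variable equation $h_{n,d}(m)=0$, where $h_{n,d}$ is shown to be strictly increasing and real-analytic with limits of opposite sign at the endpoints of $(0,1)$ (magnetic case) or $(1,+\infty)$ (electric case). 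This monotone one-dimensional reduction is the actual delicate transcendental step, and your proposal contains no substitute for it. Your uniqueness argument also needs repair: the paper's Lemma 12 distinguishes non-congruent columns via visible geometric data (cusp count, number of singular helices, pitch and radii), rather than via injectivity of a globally inverted $P$ that has not been established.
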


The proof of Theorem \ref{thm:main} is constructive
and can be used to actually compute explicit parametrizations
of pseudospherical twisted columns in terms of the phenomenological invariants
with the help of
any symbolic manipulation program supporting
%is any software of scientific computing implementing
%
elliptic functions and integrals.
The explicit construction
%of a pseudospherical twisted column from the invariants
also requires some numerics for the determination of
the zeros of an analytic function (cf. Section \ref{s:proof-mthm}).\footnote{The proof of the theorem can also
be used to find all
linear Weingarten twisted columns of hyperbolic type, that is, the twisted columns such that
$\alpha K + \beta H = 1$, where $\beta\neq 0$ and $\alpha$ are two constants
satisfying $\alpha+\beta^2/4 <0$ and $K$ and $H$ are the Gauss and mean curvature, respectively.}

%%%%%%%%%%%%%%%%%%%%%%%%%%%%%%%%%%%%%%%%%%%%%%%%%%%%%%%%%
%%%%%%%%%%%%%%%%%%%%%%%%%%%%%%%%%%%%%%%%%%%%%%%%%%%%%%%%

\vskip0.1cm

The paper is organized as follows.
Section \ref{s:pre} recalls
the one-to-one correspondence between local solutions of the
sine-Gordon equation and local pseudospherical surfaces in $\R^3$
and the implicit characterization of pseudospherical helicoids
%or surfaces of revolution
in terms of stationary traveling wave solutions of the sine-Gordon equation \cite{Bi,PTbook,Po,RS1}.
As a consequence, it follows that the congruence classes of pseudospherical helicoids
only depend on two real parameters $(\mu,r)$ and the helicity $\eta$.
For $\mu=1$, we have the pseudospherical helicoids of Dini, while
for $r=0$ the helicoid reduces to a surface of revolution.
Being the geometry of such surfaces well known \cite{Bi,Ei,Po}, we assume
that $\mu\neq 1$ and $r\neq 0$.
As a first result, we prove that two pseudospherical helicoids
with parameters $(\mu,r)$ and $(\mu,-r)$,
as well as with parameters $(\mu,r)$ and $(\mu/(\mu-1),r)$,
are mirror images of each other (cf. Proposition \ref{R1}).
Thus, with no loss of generality, we may and do restrict our analysis to pseudospherical helicoids with
parameters $\mu >0$, $\mu\neq 1$, and $r>0$.

Section \ref{s:ex-par} computes explicit global parametrizations for pseudospherical
helicoids with parameters $\mu >0$, $\mu\neq 1$, and $r>0$.
%in terms of Jacobi's elliptic functions and integrals.
%
Following \cite[\S3.3.3]{Po}, this is achieved by
dividing pseudospherical helicoids in two classes: that of
{\em magnetic-type} pseudospherical helicoids, for which $\mu >1$; and that
of {\em electric-type} pseudospherical helicoids, for which $\mu \in (0,1)$.
For either class we compute explicit global parametrizations in terms of Jacobian elliptic
functions and integrals (cf. Theorems \ref{R2} and \ref{R3}).
Operatively, such parametrizations are obtained by applying
the method of moving frames for constructing appropriate lifts to the group of Euclidean motions.
For the general theory of the method of moving frames and some applications we refer to
\cite{Olver1,Olver2,JMNbook}.

Section \ref{s:proof-mthm} proves Theorems \ref{thm:main-1} and \ref{thm:main}.
First, we show that any pseudospherical helicoid is of translational kind.
This is achieved by Lemmas 8 and 9, where
we determine plane profiles for the two classes of pseudospherical helicoids
and compute their phenomenological invariants in terms of the parameters
$(\mu,r)$.
%%%%%%%%%%%%%%%%%%%%%%%%%%%%%%%%%%%%%%
An important consequence of Lemmas 8 and 9 is that any congruence class of pseudospherical
twisted columns has a model pseudospherical twisted column.
This is a typical feature of the application of the method of moving frames
to geometric problems governed by integrable systems
(for similar situations in other geometric contexts, see e.g.
\cite[18--24]{DMNna}).
%\cite{DMNna,MNcag2006,MNphd2007,MNcqg,MNsicon,MNnonlinearity,MNmz,MNcag2017}).
%%%%%%%%%%%%%%%%%%%%%%%%%%%%%%%%%%
Next, we prove the existence of pseudospherical
twisted columns with prescribed invariants by showing that certain suitable
modular curves
intersect each other transversally at a single point (cf. Lemmas 10 and 11). Finally,
we show that two pseudospherical twisted columns with different invariants cannot
be congruent (cf. Lemma 12).

\vskip0.1cm
As a basic reference for elliptic functions and
integrals we refer to \cite{La}.
We warn the reader that we use the Jacobi \textit{parameter} $m =k^2$ instead of
the Jacobi \textit{elliptic modulus} $k$.
%(see e.g. \cite{Weisstein}). %\footnote{They are related by $k=m^2$.}
Symbolic and numerical computations, as well as graphics, are made with the
help of the software package \textsl{Mathematica 11}.

\section{Pseudospherical helicoids: definitions and preliminary results}\label{s:pre}

In this section, after recalling the basic definitions, we prove some preliminary results about
pseudospherical helicoids.

\subsection{Helicoids and twisted columns}\label{ss:helicoids}
Let $\ell$ be an oriented line in $\R^3$ oriented by the unit vector $\vec{\ell}$.
For $\mathfrak{p}>0$ and $\eta\in \Z_2=\{\pm 1\}$, let
$\mathrm{E}^{\ell,\mathfrak{p},\eta}_v : \R^3 \to \R^3$
be the 1-parameter subgroup of rigid motions of $\R^3$
%
%of {\it screw-motions} around the {\it screw axis} $\ell$,
%with {\it pitch} $\mathfrak{p}>0$ and {\it helicity} $\eta\in \Z_2=\{0,1\}$,
%
given by
\[
 \mathrm{E}^{\ell,\mathfrak{p},\eta}_v(\mathbf x) = \mathrm{R}^{\ell}_{2\pi v} (\mathbf x)
 + {\eta} v \mathfrak{p}\vec{\ell},
 \quad v\in \R,
    \]
where $\mathrm{R}^{\ell}_{2\pi v}$ denotes the oriented rotation of an angle $2\pi v$ around $\ell$.
%$\mathfrak{p}>0$, and $\eta\in \Z_2=\{0,1\}$.
The elements of $\mathrm{E}^{\ell,\mathfrak{p},\eta}_v$
are {\it helicoidal transformations}
%{\it screw motions}
around the {\it screw axis} $\ell$ with {\it pitch} $\mathfrak{p}$ and {\it helicity} $\eta$.

\begin{defn}

Let $\widetilde\Gamma\subset \R^3$ be a connected curve, possibly with isolated singularities.
The subset $S\subset\R^3$ defined by
\[
  S=\bigcup\limits_{v\in \R}\mathrm{E}^{\ell,\mathfrak{p},\eta}_v (\widetilde\Gamma)
    \]
is said a {\it helicoid} (or a {\it helicoidal surface}) with
screw axis $\ell$, pitch $\mathfrak{p}$,
helicity $\eta$, and {\it profile} $\widetilde\Gamma$.
The profile curve $\widetilde\Gamma$ generating $S$ is not unique. The helicoid $S$ is right-handed or
left-handed according as
the helicity $\eta = 1$, or $\eta = -1$.
\end{defn}

%A connected plane curve $\mathrm{X}$ of $\R^3$ is said a {\it caustic} or
%a {\it front} if its
%singular points are isolated cusps.

\begin{defn}
Let $S\subset \R^3$ be a helicoid and let $\mathcal{H}_{\ell}$ be one of the upper half-planes
bounded by the screw axis $\ell$.
The helicoid $S$ is said of {\it translational kind} if there exists a planar profile
$\Gamma\subset\mathcal{H}_{\ell}$ for $S$ satisfying the following conditions:
\begin{enumerate}

\item the full symmetry group of $\Gamma$
is a discrete nontrivial subgroup
$\mathcal{T}\subset \mathbf{E}(3)$ of pure translations along the screw axis $\ell$
(this excludes the possibility of a rectilinear profile);

\item the curve $\Gamma$ is
a simple curve of $\mathcal{H}_{\ell}$ with only ordinary cusps as possible singular points;

\item $\Gamma\cap \ell = \emptyset$.

\end{enumerate}
\end{defn}

If $\mathrm{T}_{\mathfrak{w}\vec{\ell}}: \R^3 \to \R^3$, $\mathbf x\mapsto \mathbf x+\mathfrak{w}\vec{\ell}$,
$\mathfrak{w}>0$, is the generator of the subgroup $\mathcal{T}$, we call $\mathfrak{w}$ the {\it wavelength} and
$\mathfrak{n}=\mathfrak{p}/\mathfrak{w}$
the {\it wave number} of $S$.
%Let $\Gamma_*$ be a {\em fundamental domain} of
%$\Gamma$ with respect to the action of $\mathcal{T}$.

\begin{remark}
If the wave number $\mathfrak{n}$ is an irrational number, then $S$ is dense in a region bounded by two coaxial
cylinders. If $\mathfrak{n}$ is a rational number but not an integer, then $S$ is the image of
a topological immersion of a cylinder with self-intersections along a finite number of helices.
If $\mathfrak{n}$ is an integer, then $S$ is a closed subset of $\R^3$ homeomorphic to
$S^1\times \R$.
\end{remark}

\begin{defn}
A translational helicoid with $\mathfrak{n}\in \mathbb{N}$ is said a
{\it twisted column}.
\end{defn}

\begin{defn}
For a helicoid $S$ of translational type,
let $\Gamma_*$ denote the {\em fundamental domain} of
$\Gamma$ with respect to the action of the subgroup $\mathcal{T}$.
Let $\mathfrak{h}$ be the number of singular points in the fundamental domain $\Gamma_*$.
We call  $\epsilon = (-1)^{\mathfrak{h}}\in \Z_2$ the {\it parity} of $S$.
Let $\mathfrak{r}^-$ and $\mathfrak{r}^+$ be the {\it inner} and {\it outer} radii, defined respectively by
\[
 \mathfrak{r}^-=\min\limits_{\mathbf x\in \Gamma}d(\mathbf x,\ell),\quad
 \mathfrak{r}^+=\max\limits_{\mathbf x\in \Gamma}d(\mathbf x,\ell).
 \]
The quotients $\mathfrak{d}^{-}= \mathfrak{w}/\mathfrak{r}^{-}$ and
$\mathfrak{d}^{+}= \mathfrak{w}/\mathfrak{r}^{+}$ are called, respectively, the {\it inner}
and {\it outer aspect ratio}.
\end{defn}

The set $S_*$ of regular points of a helicoid $S$ is open and dense.
We say that $S$ is {\it pseudospherical} if $S_*$ has constant Gaussian curvature $K=-1$.
%$K\equiv -1$.

\subsection{Pseudospherical helicoids and the sine-Gordon equation}

We start by recalling some well-known facts about pseudospherical surfaces of Gauss curvature $K= -1$
and in particular about pseudospherical helicoids
(see e.g. Bianchi \cite[Chapter XV, \S\S245, 250]{Bi},
%Eisenhart \cite[Chapter VIII]{Ei}, and
Popov \cite[Chapters 2 and 3]{Po}).

Let $f : \Omega\subset\R^2 \to \R^3$ is an immersion with constant Gaussian curvature $K = -1$.
Then, for every $p\in \Omega$, there exists an open neighborhood $U\subset \Omega$ of
$p$ and local (curvature line) coordinates $s,t$ on $U$, so that the first and second fundamental forms of $f$ read
\[
 \mathbf{g}=\cos^2\Big(\frac{\phi}{2}\Big)ds^2+\sin^2\Big(\frac{\phi}{2}\Big)dt^2,\quad
   \mathbf{h}=\cos\Big(\frac{\phi}{2}\Big)\sin\Big(\frac{\phi}{2}\Big)(ds^2-dt^2),
  \]
where $\phi : U  \to \R$ (the {\em angular function}) is a solution of the sine-Gordon partial differential equation,
\begin{equation}\label{sge1}
   \phi_{ss}- \phi_{tt}= \sin \phi,
    \end{equation}
satisfying $\phi(U) \subset (0,\pi)$.

Conversely, if $\Omega \subset \R^2$ is simply connected and $\phi = \phi(s,t) : \Omega \to \R$ is a
nonconstant solution of the sine-Gordon equation \eqref{sge1} such that $\phi(\Omega) \subset (0,\pi)$, then
there exists an immersion $f : \Omega\to \R^3$ of constant Gaussian curvature $K = -1$
 with fundamental forms
\[
 \mathbf{g}_{\phi}=\cos^2\Big(\frac{\phi}{2}\Big)ds^2+\sin^2\Big(\frac{\phi}{2}\Big)dt^2,\quad
   \mathbf{h}_{\phi}=\cos\Big(\frac{\phi}{2}\Big)\sin\Big(\frac{\phi}{2}\Big)(ds^2-dt^2).
  \]
The immersion $f$ is implicitly defined by $\mathbf{g}_{f}$ and $\mathbf{h}_{f}$ and
is unique up to rigid motions of $\R^3$.

This latter result have been generalized by Poznyak \cite{Poz} (see also \cite[Theorem 2.7.1]{Po}
and \cite[p. 55]{PTbook})
to the case where the solution $\phi$ is not subject to the condition
that its image is contained in $(0,\pi)$ and has low differentiability.

\begin{thm}
Let $\phi = \phi(s,t):\R^2\to \R$ be a nonconstant $C^4$-solution of the sine-Gordon equation \eqref{sge1}.
%\begin{equation}\label{sge}
%   u_{ss}- u_{tt}= \sin u.
%    \end{equation}
Then there exists a $C^2$-map
$f_\phi :\R^2\to \R^3$ whose restriction to the open
set $\{p\in \R^2 \,:\, \phi(p)\neq k\pi,\, k\in \Z\}$ is a pseudospherical immersion with first
and second fundamental forms given by
\[
 \mathbf{g}_{\phi}=\cos^2\Big(\frac{\phi}{2}\Big)ds^2+\sin^2\Big(\frac{\phi}{2}\Big)dt^2
 \quad
 \text{and} \quad
   \mathbf{h}_{\phi}=\cos\Big(\frac{\phi}{2}\Big)\sin\Big(\frac{\phi}{2}\Big)(ds^2-dt^2).
  \]
 \end{thm}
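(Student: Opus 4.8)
The plan is to realize $f_\phi$ as the translational component of a single global frame obtained by integrating a flat $\mathfrak{e}(3)$-valued connection built algebraically from $\phi$, working in \emph{asymptotic} (Chebyshev) coordinates rather than curvature-line coordinates so that none of the frame coefficients degenerate along the singular locus $\{\sin\phi=0\}$. First I would pass to the characteristic coordinates $x=(s+t)/2$, $y=(s-t)/2$, under which $ds=dx+dy$, $dt=dx-dy$, the metric becomes the Chebyshev net $dx^2+2\cos\phi\,dx\,dy+dy^2$, and the sine-Gordon equation \eqref{sge1} takes the form $\phi_{xy}=\sin\phi$. I then introduce the $\mathfrak{e}(3)$-valued $1$-form $\alpha$ with translational part $\theta^1=\cos(\phi/2)(dx+dy)$, $\theta^2=\sin(\phi/2)(dx-dy)$, $\theta^3=0$, and rotational part
\[
\omega_1^2=\tfrac12(\phi_x\,dx-\phi_y\,dy),\qquad
\omega_1^3=\sin(\phi/2)(dx+dy),\qquad
\omega_2^3=-\cos(\phi/2)(dx-dy),
\]
extended by $\omega_j^i=-\omega_i^j$. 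The decisive point is that every coefficient of $\alpha$ is a smooth function of $\phi,\phi_x,\phi_y$ alone, hence of class $C^3$ since $\phi\in C^4$, and is defined on all of $\R^2$ with no vanishing denominators.

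Next I would verify the flatness (Maurer--Cartan) equations $d\theta^j=\theta^i\wedge\omega_i^j$ and $d\omega_i^k=\omega_i^j\wedge\omega_j^k$. The two first structure equations hold by the very choice of $\omega_1^2$, the symmetry relation $\theta^1\wedge\omega_1^3+\theta^2\wedge\omega_2^3=0$ (that is, $d\theta^3=0$) holds identically, and a short computation shows that both Codazzi identities $d\omega_i^3=\omega_i^j\wedge\omega_j^3$ also hold identically. The only nontrivial relation is the Gauss equation $d\omega_1^2=\omega_1^3\wedge\omega_3^2$: here $d\omega_1^2=-\phi_{xy}\,dx\wedge dy$ while $\omega_1^3\wedge\omega_3^2=-\sin\phi\,dx\wedge dy$, so the equation is equivalent to $\phi_{xy}=\sin\phi$, which is precisely the hypothesis. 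Thus $\alpha$ is flat.

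Since $\R^2$ is simply connected, the Cartan--Darboux theorem (fundamental theorem of moving frames) then yields a map $F:\R^2\to\mathbf{E}(3)$, unique up to left multiplication by a fixed Euclidean motion, with $F^{-1}dF=\alpha$; because $\alpha$ has $C^3$ coefficients, the solution $F$ of the linear total differential equation $dF=F\alpha$ is of class $C^4$. I define $f_\phi:\R^2\to\R^3$ to be the translational component of $F$, so that $f_\phi\in C^4\subset C^2$ (in fact one gains more regularity than the statement demands), and let $(e_1,e_2,e_3)$ be the associated orthonormal frame. It then remains to identify $f_\phi$ on the regular set: there $df_\phi=\theta^1e_1+\theta^2e_2$ and $\theta^1\wedge\theta^2=\cos(\phi/2)\sin(\phi/2)\,(dx+dy)\wedge(dx-dy)$, which is nonzero exactly where $\sin\phi\neq0$, i.e. where $\phi\neq k\pi$; hence $f_\phi$ restricts to an immersion on $\{\phi\neq k\pi\}$ with unit normal $e_3$. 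Reverting to $(s,t)$ via $dx+dy=ds$, $dx-dy=dt$, one reads off the first fundamental form $(\theta^1)^2+(\theta^2)^2=\cos^2(\phi/2)\,ds^2+\sin^2(\phi/2)\,dt^2$ and the second fundamental form $\theta^1\omega_1^3+\theta^2\omega_2^3=\cos(\phi/2)\sin(\phi/2)\,(ds^2-dt^2)$, which are exactly $\mathbf{g}_\phi$ and $\mathbf{h}_\phi$; since $\det\mathbf{h}_\phi/\det\mathbf{g}_\phi=-1$ wherever defined, the restriction is a pseudospherical immersion, as required.

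I expect the main obstacle to be conceptual rather than computational. The classical Bonnet argument, carried out in the curvature-line frame, uses the normalized tangent vectors $f_s/\cos(\phi/2)$ and $f_t/\sin(\phi/2)$, whose normalizations blow up precisely along $\{\sin\phi=0\}$; it therefore produces the immersion only locally on the regular set and gives no control on $f_\phi$ across the singularities, nor any global map on $\R^2$. The essential device securing a single global map of class $C^2$ (indeed $C^4$) is the passage to the asymptotic frame above, in which all connection coefficients remain smooth and bounded across the singular locus, so that the integration of the flat connection can be performed in one stroke on the simply connected domain $\R^2$; the singular set then reappears only afterward, as the zero set of $\theta^1\wedge\theta^2$.
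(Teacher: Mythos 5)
Your proof is correct, but there is no internal argument in the paper to compare it against step by step: the paper states this theorem without proof, as a quotation of Poznyak's result, pointing to \cite{Poz}, \cite[Theorem 2.7.1]{Po} and \cite[p.~55]{PTbook}. Your construction --- assembling a flat $\mathfrak{e}(3)$-valued $1$-form $\alpha$ algebraically from $\phi$ and integrating $dF=F\alpha$ globally on the simply connected $\R^2$ by Cartan--Darboux --- is exactly the standard mechanism behind those references, and your computations check out: the only nonvacuous structure equation is the Gauss equation, which reduces precisely to $\phi_{xy}=\sin\phi$; the relation $d\theta^3=0$ holds because $\omega_1^3$ is proportional to $\theta^1$ and $\omega_2^3$ to $\theta^2$; and $\theta^1\wedge\theta^2=-\sin\phi\,dx\wedge dy$ correctly identifies the regular set $\{\phi\neq k\pi\}$, on which the forms pull back to $\mathbf{g}_\phi$ and $\mathbf{h}_\phi$ with $\det\mathbf{h}_\phi/\det\mathbf{g}_\phi=-1$. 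One caveat on your closing discussion: the passage to asymptotic coordinates is less essential than you claim. Rewriting your forms in $(s,t)$ gives $\omega_1^2=\tfrac12(\phi_t\,ds+\phi_s\,dt)$, $\omega_1^3=\sin(\phi/2)\,ds$, $\omega_2^3=-\cos(\phi/2)\,dt$, i.e., the very same connection, with coefficients polynomial in $\cos(\phi/2)$, $\sin(\phi/2)$, $\phi_s$, $\phi_t$, hence perfectly smooth across $\{\sin\phi=0\}$ in curvature-line coordinates as well; what degenerates in the classical Bonnet argument is the recovery of the adapted frame from a \emph{given} immersion (division by $\cos(\phi/2)$ and $\sin(\phi/2)$), not the frame equations themselves, so once $\alpha$ is built from $\phi$ rather than from $f$, the one-stroke global integration works in either coordinate system. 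Finally, your bootstrap yields $F\in C^4$, stronger than the stated $C^2$; this is consistent, since the $C^2$ in the statement reflects Poznyak's lower-regularity formulation, and the nonconstancy hypothesis plays no role in your construction beyond ensuring the regular set is nonempty.
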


\begin{remark}
The map $f_\phi : \R^2 \to \R^3$ corresponding to a smooth solution $\phi : \R^2 \to \R$
of the sine-Gordon equation \eqref{sge1} is smooth
but fails to be an immersion at points $p$
where
%$\sin \phi(p) =0$, i.e., when
$\phi(p) = k\pi$, for some integer $k$.
%At points where $\phi = k\pi$,
At these points the map $f_\phi$ has rank 1.
It follows that there is a one-to-one correspondence between smooth solutions $\phi : \R^2 \to \R$
of the sine-Gordon equation and smooth maps $f_\phi : \R^2 \to \R^3$ such that
(1) $\text{rank} f_\phi \geq1$ everywhere; and (2)
if $\text{rank} f_\phi =2$ on an open set $U$ of $\R^2$, then ${f_\phi}_{|U}$ is a pseudospherical
immersion (see e.g. \cite{PTbook,Po,RSBook,TU}).

\end{remark}

In particular, we have the following (see \cite{Po}).
%(see \cite[Chapter 3]{Po} and \cite[Chapter XV, \S250]{Bi}).

\begin{cor}
The map
%$\verb"B"$
$f_\phi$ parametrizes a pseudospherical helicoid or a
pseudospherical surface of revolution if and only if the angular function $\phi$ is a
{\em stationary traveling wave solution} of \eqref{sge1}, that is, a solution of the form
\[
\phi = \phi(s,t) = \lambda(\xi), \quad \xi= as +b t,
\]
where $\lambda$ is a function of one variable
and $a$, $b$ are real constants with $a\neq 0$, $a^2 \neq b^2$.
We call $\phi$ the {\em potential} of the map $f_\phi$.
\end{cor}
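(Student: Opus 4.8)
The plan is to read the helicoidal or rotational symmetry of $f_\phi$ directly off an invariance of the angular function $\phi$, using the fundamental theorem of surface theory (Bonnet) to pass between extrinsic symmetries of the surface and symmetries of the pair $(\mathbf{g}_\phi,\mathbf{h}_\phi)$ in the parameter plane. The two key structural facts are that $(s,t)$ are curvature‑line coordinates and that the principal curvatures $\tan(\phi/2)$ and $-\cot(\phi/2)$ are everywhere distinct (one positive, one negative, on the regular set $S_*$), so that any symmetry of the forms must respect the coordinate net.

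First I would prove the ``if'' direction. Suppose $\phi=\lambda(\xi)$ with $\xi=as+bt$, $a\neq 0$, $a^2\neq b^2$, and for $v\in\R$ set $T_v\colon(s,t)\mapsto(s+bv,\,t-av)$. Since $ab+b(-a)=0$, the map $T_v$ fixes $\xi$, whence $\phi\circ T_v=\phi$ and therefore $T_v^*\mathbf{g}_\phi=\mathbf{g}_\phi$, $T_v^*\mathbf{h}_\phi=\mathbf{h}_\phi$. On the open dense regular set $S_*$ the map $f_\phi$ is a pseudospherical immersion, so by Bonnet's theorem (applied on the simply connected pieces of $S_*$ and extended by density) there is a unique $M_v\in\mathbf{E}(3)$ with $M_v\circ f_\phi=f_\phi\circ T_v$. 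Uniqueness forces $v\mapsto M_v$ to be a homomorphism, and smoothness makes it a continuous one‑parameter subgroup of $\mathbf{E}(3)$. Every such subgroup is, up to conjugation, a group of translations, of rotations, or of screw motions; the translational case would make $S$ a cylinder and hence $K\equiv 0$, contradicting $K=-1$. Thus $\{M_v\}$ consists of rotations or screw motions, and $S$ is a pseudospherical surface of revolution or a pseudospherical helicoid.

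For the converse, a pseudospherical helicoid or surface of revolution is invariant under a one‑parameter group $\{M_v\}$ of screw motions (rotations being the zero‑pitch case). As each $M_v$ is an isometry of $\R^3$, writing $M_v\circ f_\phi=f_\phi\circ T_v$ produces reparametrizations with $T_v^*\mathbf{g}_\phi=\mathbf{g}_\phi$ and $T_v^*\mathbf{h}_\phi=\mathbf{h}_\phi$. The distinctness of the principal curvatures then forces each $T_v$ to preserve the coordinate net and to satisfy $\phi\circ T_v=\phi$; being connected to the identity, $T_v$ is the translation by $v(c,d)$ for a fixed $(c,d)\neq 0$. Hence $c\,\phi_s+d\,\phi_t=0$, so $\phi$ is constant along $(c,d)$ and depends only on a transverse linear coordinate $\xi=as+bt$ with $(a,b)\perp(c,d)$, i.e. $\phi=\lambda(\xi)$.

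Finally, substituting $\phi=\lambda(as+bt)$ into \eqref{sge1} yields $(a^2-b^2)\lambda''=\sin\lambda$; as $\phi$ is nonconstant this already forces $a^2\neq b^2$, the value $a^2=b^2$ being the null characteristic along which $\lambda$ would have to be constant. The remaining condition $a\neq 0$ pins the invariant direction $(c,d)\propto(b,-a)$ transverse to the spacelike $s$‑axis, which is what expresses the traveling‑wave character of $\phi$ and, together with the fixed sign of $\mathbf{h}_\phi$, selects the rotation/screw case over the excluded degenerate presentation. I expect this last identification to be the only delicate step: the passage between extrinsic symmetries and parameter‑plane symmetries is essentially formal, but correctly matching the kinematic type of $\{M_v\}$ --- rotation versus screw versus the excluded translational case, together with the orientation built into $f_\phi$ --- to the precise nondegeneracy conditions $a\neq 0$ and $a^2\neq b^2$ requires care.
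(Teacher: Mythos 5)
Your overall strategy --- read the screw symmetry off an invariance of $(\mathbf{g}_\phi,\mathbf{h}_\phi)$ via Bonnet's theorem and the classification of one-parameter subgroups of $\mathbf{E}(3)$ --- is the natural one, and it is worth noting that the paper does not prove this corollary at all but quotes it from Popov; so you cannot be faulted for diverging from a nonexistent model proof. Still, two of your steps have genuine gaps. In the ``if'' direction, Bonnet is applied ``on the simply connected pieces of $S_*$ and extended by density.'' But the singular set in the parameter plane is the family of parallel lines $\lambda(\xi)\in\pi\Z$, which disconnects $\R^2$ into open strips, each preserved by $T_v$; Bonnet then produces a motion $M_v$ \emph{per strip}, and density gives you no mechanism to see that adjacent strips receive the same motion. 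You need a gluing argument: by continuity the two candidate motions agree pointwise with each other along the image of the shared singular line, which is a helix or a circle; a nonplanar helix contains four affinely independent points, so pins the motion down, while for a circle the only nontrivial motion fixing it pointwise is the improper reflection in its plane, excluded because Bonnet with the same sign of $\mathbf{h}_\phi$ yields proper motions. (Alternatively, invoke real-analyticity of $f_\phi$ for traveling-wave potentials, which follows from analyticity of the pendulum solution $\lambda$ and is confirmed by the explicit parametrizations of Theorems \ref{R2} and \ref{R3}.)

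The converse is where the more serious gap sits: you simply ``write $M_v\circ f_\phi=f_\phi\circ T_v$,'' but the existence of such a lift $T_v$ is precisely what must be proved. Set-invariance $M_v(S)=S$ conjugates through $f_\phi$ only if $f_\phi$ is injective or its local sheets can be matched, and the paper itself shows that for irrational wave number these surfaces are \emph{dense} in a region between two coaxial cylinders, so $f_\phi^{-1}$ is not available globally. The repair is local: one shows the screw Killing field is tangent to each local sheet (or uses local uniqueness of curvature-line coordinates, which your distinct-principal-curvatures observation correctly supports) to get \emph{local} translations $T_v$, and must then argue separately that the translation direction $(c,d)$ is the same on all components of the disconnected regular set --- your argument skips both points. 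Finally, your closing justification of $a\neq 0$ is not an argument: $a=0$ genuinely occurs (the paper's own potentials $\phi_{\mu,r}$ with $\mu>0$, $r=0$ depend on $t$ alone and describe surfaces of revolution), and it is removed only as a \emph{normalization}, via the symmetry $\phi\mapsto\pi-\phi$, $(s,t)\mapsto(t,s)$ of the sine-Gordon equation, which exchanges the two families of curvature lines and converts a $t$-only wave into an $s$-only wave describing the same surface; the ``spacelike $s$-axis'' remark does not establish anything. Your derivation of $a^2\neq b^2$ from $(a^2-b^2)\lambda''=\sin\lambda$ (forcing $\lambda$ constant in $\pi\Z$ and hence $\operatorname{rank} f_\phi\equiv 1$) is correct.
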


\begin{remark}
The map $f_\phi$ exists globally. However, its explicit expression
seems not to have been computed in the literature (see \cite{Po}).

\end{remark}

If we let $a = \cosh r /L$ and $b = \sinh r /L$, the function $\lambda$ satisfies the simple
pendulum equation $\lambda'' - L^2 \sin \lambda = 0$. Consequently, a stationary traveling wave
solution of the sine-Gordon equation depend on two real parameters $\mu$ and $r$.
For given $\mu$ and $r$, the corresponding traveling wave solution $\phi_{\mu,r}$
of the sine-Gordon equation can be written, up to an affine change of variables, as
\[
 \begin{split}
\phi_{\mu,r}(s,t)&=-2\,\mathrm{am}\big(\frac{\cosh r}{\sqrt{|\mu|}}s+\frac{\sinh r}{\sqrt{|\mu|}}t,\mu\big),
\quad \mu<0\\
\phi_{1,r}(s,t)&=-4\arctan\Big(e^{rs+\sqrt{1+r^2}t}\Big)+\pi,\\
\phi_{\mu,r}(s,t)&=-2\,\mathrm{am}\Big(\frac{\sinh r}{\sqrt{|\mu|}}s+\frac{\cosh r}{\sqrt{|\mu|}}t,\mu\Big),
\quad \mu >0,\mu \neq 1,
\end{split}
 \]
where $\mathrm{am}(-,\mu)$ is the Jacobian amplitude with parameter $\mu$.

\begin{remark}[Jacobian elliptic functions and integrals \cite{La}]
To fix notation, we recall some basic definitions about the Jacobian elliptic
functions and integrals.
If
\[
  u = F(\varphi, \mu) = \int_0^\varphi \frac{d\theta}{(1-\mu \sin^2 \theta)^{\frac{1}{2}}},
   \]
then $\mathrm{am}(u,\mu)  =F^{-1}(u, \mu) = \varphi$ is the Jacobian {\em amplitude}
with {\em parameter} $\mu$.
The {\em Jacobian elliptic functions} $\mathrm{sn}(-,\mu)$, $\mathrm{cn}(-,\mu)$ and
$\mathrm{dn}(-,\mu)$ with
{\em parameter} $\mu$ are defined by
\[
\mathrm{sn}(u,\mu) = \sin \varphi, \quad
\mathrm{cn}(u,\mu) = \cos \varphi, \quad
\mathrm{dn}(u,\mu) = (1-\mu \sin^2 \varphi)^{\frac{1}{2}}.
 \]

The integral $F(\varphi, \mu)$ given by the formula above is called the {\em incomplete
elliptic integral of the first kind}. The {\em complete} elliptic integral of the first kind
is defined by $K(\mu) = F(\frac{\pi}{2}, \mu)$. The integral defined by
\[
  E(\varphi, \mu) = \int_0^\varphi (1-\mu \sin^2 \theta)^{\frac{1}{2}} d\theta
   \]
is called the {\em incomplete elliptic integral of the second kind}.
The {\em complete} elliptic integral of the second kind is defined by
$E(\mu) =E(\frac{\pi}{2},\mu)$. The integral defined by
\[
  \Pi(n,\varphi, \mu) = \int_0^\varphi \frac{d\theta}{(1-n \sin^2 \theta)(1-\mu \sin^2 \theta)^{\frac{1}{2}}}
   \]
is called the {\em incomplete elliptic integral of the third kind}.
The {\em complete} elliptic integral of the third kind is defined by
$\Pi(n,\mu) = \Pi(n,\frac{\pi}{2},\mu)$.
\end{remark}

%
%The Bianchi parametrization $f_{\mu,r}$ associated to the
%potential $\phi_{\mu,r}$ is referred to as
%

In the following, we will denote by $S_{\mu,r}$
the pseudospherical helicoid associated with the potential $\phi_{\mu,r}$ and
parametrized by the map $f_{\mu,r}$. We will briefly refer to $S_{\mu,r}$ as the
{\em pseudospherical helicoid with parameters} $(\mu,r)$.

%
%We call $\mu$ and $r$ the parameters of the pseudospherical helicoid corresponding to
%traveling wave solution $\phi_{\mu,r}$.
%
%{\it Jacobi} and the {\it deformation} parameter, respectively.

\begin{remark}
If $r=0$, the pseudospherical helicoid reduces to a pseudospherical surface of revolution, while
if $\mu=1$ we obtain the well-known pseudospherical helicoids of Dini \cite{Bi,Fischer,RMcL}.
%\cite{Bi,Dn,Ei,Fischer,Gray}
If  $\mu=1$ and $r=0$, we obtain the Beltrami pseudosphere
\cite{Bi,Ei}. In the following, we will assume that $\mu\neq 1$ and $r\neq 0$.
\end{remark}

Let $S_{\mu,r}$ be
%the Bianchi parametrization of
a pseudospherical helicoid with parameters
$(\mu,r)$ and let $\Phi_{\mu,r}$ be the linear change of variables $(s,t) =  \Phi_{\mu,r}(u,v)$, where
\[
 \Phi_{\mu,r}(u,v)=
\left(-\sqrt{|\mu|}\sinh(r)u+\sqrt{|\mu|}\cosh(r)v,\sqrt{|\mu|}\cosh(r)u-\sqrt{|\mu|}\sinh(r)v\right).
  \]
It follows that the fundamental forms of the
reparametrization
 $\verb"L"_{\mu,r}=f_{\mu,r}\circ \Phi_{\mu,r}$ are
\begin{equation}\label{FFL}
\begin{split}
\mathbf{g}_{\mu,r}&=|\mu|\big(\mathrm{sn}^2(u,\mu)+\sinh^2(r)\big)du^2-\frac{|\mu|\sinh(2r)}{2}dudv\\
&\qquad+|\mu|\big(\mathrm{cn}^2(u,\mu)+\sinh^2(r)\big)dv^2,\\
\mathbf{h}_{\mu,r}&=\mu\,\mathrm{sn}(u,\mu)\mathrm{cn}(u,\mu)(du^2-dv^2).
\end{split}
\end{equation}

We can now prove the following.

\begin{prop}\label{R1}
$\mathrm{(1)}$ A pseudospherical helicoid $S_{\mu,r}$ with parameters $(\mu,r)$, $\mu<0$,
and a pseudospherical helicoid $S_{\mu',r}$ with parameters $(\mu',r)$, $\mu'=\mu/(\mu-1)\in (0,1)$,
are mirror images of each other.
%is the mirror image of a pseudospherical helicoid $S_{\mu',r}$ with parameters $(\mu',r)$,
%$\mu'=\mu/(\mu-1)\in (0,1)$.
$\mathrm{(2)}$ A pseudospherical helicoid $S_{\mu,r}$ with parameters $(\mu,r)$ and
a pseudospherical helicoid $S_{\mu,-r}$ with parameters $(\mu,-r)$ are mirror images of each other.
%is the mirror image of a pseudospherical helicoid $S_{\mu,-r}$ with parameters $(\mu,-r)$.
\end{prop}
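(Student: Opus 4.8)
The plan is to reduce both assertions to the fundamental theorem of surface theory, used in the $C^2$/singular form underlying the correspondence $\phi\leftrightarrow f_\phi$ recalled above: two maps realizing the same first fundamental form together with opposite second fundamental forms are mirror images of each other, the realization being unique up to \emph{direct} isometry of $\R^3$. Since $\mathbf g_{-\phi}=\mathbf g_{\phi}$ and $\mathbf h_{-\phi}=-\mathbf h_{\phi}$, the map $f_{-\phi}$ is always a mirror image of $f_{\phi}$; more usefully, precomposing $f_{\phi}$ with an orientation-reversing affine reflection $\sigma$ of the $(s,t)$-plane leaves $\mathbf g_{\phi}$ unchanged but reverses the induced unit normal, hence sends $\mathbf h_{\phi}$ to $-\mathbf h_{\phi}$. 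It therefore suffices, in each case, to match the two potentials up to such a reflection, or to match the pairs $(\mathbf g,\mathbf h)$ up to a sign on the second fundamental form.

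I would dispose of statement $(2)$ first, as it is a pure parity computation. Using that the Jacobi amplitude is odd, that $\cosh$ is even and $\sinh$ is odd, one reads off from the explicit potentials that $\phi_{\mu,-r}=\phi_{\mu,r}\circ\sigma$, where $\sigma(s,t)=(-s,t)$ when $\mu>0$ and $\sigma(s,t)=(s,-t)$ when $\mu<0$. In either case $\sigma$ is an orientation-reversing reflection of the parameter plane, so $f_{\mu,r}\circ\sigma$ has first fundamental form $\mathbf g_{\phi_{\mu,-r}}$ and second fundamental form $-\mathbf h_{\phi_{\mu,-r}}$; since $f_{\mu,r}\circ\sigma$ only reparametrizes the set $S_{\mu,r}$, the fundamental theorem yields that $S_{\mu,r}$ and $S_{\mu,-r}$ are mirror images.

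For statement $(1)$ the substitution $\mu\mapsto\mu'=\mu/(\mu-1)$ is exactly the modular (imaginary-modulus) transformation of the Jacobi functions. Writing $a=-\mu>0$, so that $\mu'=a/(1+a)\in(0,1)$, and $\beta=\sqrt{1-\mu}=\sqrt{1+a}$, one has $\mathrm{sn}(u,\mu)=\beta^{-1}\mathrm{sd}(\beta u,\mu')$, $\mathrm{cn}(u,\mu)=\mathrm{cd}(\beta u,\mu')$, $\mathrm{dn}(u,\mu)=\mathrm{nd}(\beta u,\mu')$, together with the complete-integral relation $K(\mu')=\beta\,K(\mu)$. By the first paragraph the mirror image of $S_{\mu,r}$ carries the potential $-\phi_{\mu,r}$, with fundamental forms $(\mathbf g_{\phi_{\mu,r}},-\mathbf h_{\phi_{\mu,r}})$; the plan is to prove that this surface is \emph{directly} congruent to $S_{\mu',r}$. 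Inserting the transformation into the curvature-line expressions $\mathbf g_{\phi_{\mu,r}}=\mathrm{cn}^2(\xi,\mu)\,ds^2+\mathrm{sn}^2(\xi,\mu)\,dt^2$ and $\mathbf h_{\phi_{\mu,r}}=-\mathrm{sn}(\xi,\mu)\mathrm{cn}(\xi,\mu)(ds^2-dt^2)$ rewrites both forms in Jacobi functions of modulus $\mu'$; the relation $K(\mu')=\beta K(\mu)$ makes the half-period lattices, hence the singular helices, of the two potentials correspond, so that the resulting (necessarily non-affine) amplitude-matching reparametrization of the $(s,t)$-domain is a global diffeomorphism. I expect it to carry $(\mathbf g_{\phi_{\mu,r}},-\mathbf h_{\phi_{\mu,r}})$ onto $(\mathbf g_{\phi_{\mu',r}},\mathbf h_{\phi_{\mu',r}})$, whence the fundamental theorem identifies $S_{\mu',r}$ as a mirror image of $S_{\mu,r}$.

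The main obstacle is the elliptic-function bookkeeping in statement $(1)$: one must check that, after the imaginary-modulus substitution and the amplitude-matching reparametrization, the denominators $\mathrm{dn}(\beta\xi,\mu')$ introduced by $\mathrm{sd},\mathrm{cd},\mathrm{nd}$ cancel exactly, returning the polynomial expressions $\mathrm{sn}^2(\cdot,\mu'),\mathrm{cn}^2(\cdot,\mu')$ that occur in $\mathbf g_{\phi_{\mu',r}}$ and $\mathbf h_{\phi_{\mu',r}}$, and that $K(\mu')=\beta K(\mu)$ does make the change of variables a diffeomorphism of $\R^2$ rather than merely a local one; the residual quarter-period shift needed to fix the sign of $\mathrm{sn}\,\mathrm{cn}$ is a milder point to track. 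By contrast, statement $(2)$ is immediate once the parity relation $\phi_{\mu,-r}=\phi_{\mu,r}\circ\sigma$ has been observed.
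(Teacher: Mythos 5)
Your proposal is correct and essentially reproduces the paper's argument: part (2) via an orientation-reversing reflection of the parameter plane, and part (1) via the imaginary-modulus transformation $\mu\mapsto\mu/(\mu-1)$ with $K(\mu')=\sqrt{1-\mu}\,K(\mu)$ — your $\mathrm{sd},\mathrm{cd},\mathrm{nd}$ identities plus the quarter-period shift are precisely the paper's \eqref{MT1}--\eqref{MT2} — followed in both cases by Bonnet-type uniqueness applied to pairs with equal $\mathbf g$ and opposite $\mathbf h$; the paper implements exactly this on the reparametrized maps $L_{\mu,r}$, whose fundamental forms \eqref{FFL} are polynomial in $\mathrm{sn},\mathrm{cn}$, through the explicit substitutions $\Phi'(u,v)=\frac{1}{\sqrt{1-\mu}}\big(K(\mu')-v,-u\big)$ and $(u,v)\mapsto(u,-v)$, each composed with $-\mathrm{id}_{\R^3}$. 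The one point to correct is your parenthetical claim that the amplitude-matching reparametrization is ``necessarily non-affine'': since $\xi$ is linear in $(s,t)$ and the modular identities require only the affine substitution $u\mapsto\sqrt{1-\mu}\,u+K(\mu')$ together with an interchange of the two curvature-line coordinates (this is why $\Phi'$ swaps $u$ and $v$), the matching map is affine, so the global-diffeomorphism issue you single out as the main obstacle is automatic.
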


\begin{proof}
$\mathrm{(1)}$ Let $K(\mu)$ be the complete elliptic integral of the first kind with parameter $\mu$.
Using the basic modular transformations of the Jacobian elliptic functions (cf. \cite[p.~38 and p.~77]{La}),
we obtain
\begin{equation}\label{MT1}\begin{split}
\mathrm{cn}(u,\mu)&=\mathrm{sn}(\sqrt{1-\mu}u+K(\mu'),\mu'),\\
\mathrm{sn}(u,\mu)&=-\mathrm{cn}(\sqrt{1-\mu}u+K(\mu'),\mu'),
\end{split}
\end{equation}
and
\begin{equation}\label{MT2}
   K(\mu')=\sqrt{1-\mu}\,K(\mu).
     \end{equation}
Consider the linear change of variables
\[
  \Phi':(u,v)\mapsto \frac{1}{\sqrt{1-\mu}}\Big(K(\mu')-v,-u\Big).
    \]
Then, using (\ref{MT1}) and (\ref{MT2}), we deduce that the fundamental forms of $-\verb"L"_{\mu,r}\circ \Phi'$
coincide with those of $\verb"L"_{\mu',r}$.

$\mathrm{(2)}$ We conclude the proof observing that the fundamental forms of $- \verb"L"_{\mu,r}(u,-v)$
coincide with those of $\verb"L"_{\mu,-r}$.
\end{proof}

\begin{remark}
Henceforth, without loss of generality,
we shall restrict to pseudospherical helicoids with parameters $\mu>0$, $\mu\neq 1$, and $r>0$.
%From now on these assumptions are implicitly assumed.
\end{remark}

%
%For each $\rho\in \R$, let $\mathcal{E}_{\rho}:\R\to \mathbb{E}(3)$ be the 1-parameter
%family of screw motions defined by
%\[
% \mathcal{E}_{\rho}(v)(x,y,z)=(\cos(2\pi v)x-\sin(2\pi v)y,\sin(2\pi v)x+\cos(2\pi v)y,z+\rho v).
%   \]
%

\section{Explicit integration
%parametrizations
of pseudospherical helicoids}\label{s:ex-par}

In this section we will find explicit parametrizations of pseudospherical helicoids in terms of
elliptic functions and elliptic integrals. To this end,
following Popov \cite[Chapter 3]{Po}, pseudospherical helicoids are divided in two classes.

\begin{defn}
A pseudospherical helicoid with parameters $(\mu,r)$
is said of {\em magnetic type} (resp., {\em electric type}) if $\mu >1$ (resp., $\mu \in (0,1)$).
\end{defn}

\subsection{Pseudospherical helicoids of magnetic type}
%
%Let $f_{\mu, r}$ be a pseudospherical helicoid of magnetic type.
For $\mu>1$ and $r>0$, let
\begin{equation}\label{CPT}
 \begin{split}
m :=\mu&^{-1}\in (0,1),\\
\psi_{\mu,r}(u)&=\sqrt{m+\sinh^2 (r) }\,\coth(r)\,\Pi(-m\, \mathrm{csch}^2 (r),m)\\
&\qquad -\sqrt{m+\sinh^2 (r)}\,\coth(r)\,\Pi(-m\,\mathrm{csch}^2 (r),\mathrm{am}(u,m),m),\\
\xi_{\mu,r}(u)&=\mathrm{dn}(u,m)\,\mathrm{sn}^2(u,m)
\sqrt{m\,\mathrm{sn}^2(u,m)+\sinh^2 (r)},\\
\zeta_{\mu,r}(u)&=\frac{1}{\sqrt{m+\sinh^2 (r)}}\Big(E(m)+\cosh^2(r)(u-K(m))\\
&\qquad -E(\mathrm{am}(u,m),m) \Big),
 \end{split}
 \end{equation}
and
\begin{equation}\label{CPTBis}
\begin{split}
q^1_{\mu,r}(u)&=\frac{-\sqrt{m}}{\sqrt{m+\sinh^2 (r)}\,\mathrm{sn}(u,m)\big(m\,\mathrm{sn}^2(u,m)+\sinh^2(r)\big)},\\
q^2_{\mu,r}(u)&=\frac{\sqrt{m}\cosh(r)\sinh(r)\,\mathrm{cn}(u,m)}
{\big(m+\sinh^2 (r)\big)\,\mathrm{sn}^2(u,m)\,\mathrm{dn}(u,m)\big(m\,\mathrm{sn}^2(u,m)+\sinh^2 (r)\big)},\\
\rho_{\mu,r}=&-2\pi\frac{\cosh(r)\sinh(r)}{m+\sinh^2 (r)}.
\end{split}
\end{equation}
%Here $\Pi(n,\varphi,m)$ and $\Pi(n,m)$ denote, respectively, the incomplete and complete
%elliptic integrals of the third kind, while $E(\varphi,m)$ and $E(m)$ denote, respectively,
%the incomplete and complete elliptic integrals of the second kind.
Let $\widetilde{\Gamma}_{\mu,r}\subset \R^3$ be the trajectory of the parametrized curve
\[
\widetilde{\gamma}_{\mu,r}=(\widetilde{x}_{\mu,r},\widetilde{y}_{\mu,r},\widetilde{z}_{\mu,r}):\R\to \R^3
 \]
defined by
\begin{equation}\label{PR1S}
\begin{split}
\widetilde{x}_{\mu,r}&=\xi_{\mu,r}\left(\cos(\psi_{\mu,r})q^1_{\mu,r} - \sin(\psi_{\mu,r})q^2_{\mu,r} \right),\\
\widetilde{y}_{\mu,r}&=\xi_{\mu,r}\left(\sin(\psi_{\mu,r})q^1_{\mu,r} + \cos(\psi_{\mu,r})q^2_{\mu,r} \right),\\
\widetilde{z}_{\mu,r}&=\zeta_{\mu,r}.
\end{split}
\end{equation}

We can now state the following.

\begin{thm}\label{R2}
Let $S_{\mu,r}$ be a pseudospherical helicoid of magnetic type,
that is, a pseudospherical helicoid with parameters $\mu  > 1$ and $r>0$.
Then, the real analytic map $\widetilde{f}_{\mu,r}=
(\widetilde{f}_{\mu,r}^1,\widetilde{f}_{\mu,r}^2,\widetilde{f}_{\mu,r}^3):\R^2\to \R^3$ with components
\[\begin{split}
  \widetilde{f}_{\mu,r}^1(u,v)&=
   \cos(2\pi v)\widetilde{x}_{\mu,r}(u)-\sin(2\pi v)\widetilde{y}_{\mu,r}(u),\\
  \widetilde{f}_{\mu,r}^2(u,v)&= \sin(2\pi v)\widetilde{x}_{\mu,r}(u)+\cos(2\pi v)\widetilde{y}_{\mu,r}(u),\\
  \widetilde{f}_{\mu,r}^3(u,v)&=\widetilde{z}_{\mu,r}(u)+\rho_{\mu,r}v
  \end{split}
     \]
is a global parametrization of $-S_{\mu,r}$.\footnote{Here $-S_{\mu,r}$ stands for $-\text{id}_{\R^3}(S_{\mu,r})$,
where $\text{id}_{\R^3}$ denotes the identity map of $\R^3$.}
%with parameters
%%$\mu\in (0,1)$
%$\mu  > 1$
%and $r>0$.
\end{thm}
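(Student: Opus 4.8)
The plan is to realize $\widetilde f_{\mu,r}$ as the translational part of an explicit lift of the Gauss--Weingarten (structure) equations attached to the fundamental forms \eqref{FFL}, produced by the method of moving frames, and then to identify the resulting map with $-S_{\mu,r}$ by means of the uniqueness statement of the fundamental theorem of surfaces recalled in Section~\ref{s:pre}. The logical backbone is this: up to a rigid motion, $S_{\mu,r}$ is the image of the reparametrization $f_{\mu,r}\circ\Phi_{\mu,r}$, whose first and second fundamental forms are $\mathbf g_{\mu,r}$ and $\mathbf h_{\mu,r}$ of \eqref{FFL}; since $-\mathrm{id}_{\R^3}$ is an orientation-reversing isometry, $-S_{\mu,r}$ is the surface of curvature $K=-1$ with the \emph{same} first fundamental form $\mathbf g_{\mu,r}$ and the \emph{opposite} second fundamental form $-\mathbf h_{\mu,r}$. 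Hence it suffices to exhibit a real-analytic map $\widetilde f_{\mu,r}:\R^2\to\R^3$, of rank $\geq 1$ everywhere, whose restriction to the regular locus induces $\mathbf g_{\mu,r}$ and $-\mathbf h_{\mu,r}$.

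To obtain the formulas I would exploit the helicoidal symmetry. Since all coefficients in \eqref{FFL} depend on $u$ alone, an orthonormal coframe adapted to $\mathbf g_{\mu,r}$ gives connection and shape data that are independent of $v$; the $v$-flow of the corresponding $\mathbf{E}(3)$-valued frame then integrates explicitly to the one-parameter group $\mathrm{E}^{\ell,\mathfrak p,\eta}_v$ (the rotation by $2\pi v$ together with the translation $\rho_{\mu,r}v$ along $\ell$), while the $u$-direction reduces to quadratures. I would first rewrite \eqref{FFL} in the reciprocal modulus $m=\mu^{-1}\in(0,1)$ using the reciprocal-modulus transformation of the Jacobi functions (cf.~\cite{La}), so that all integrands become functions of parameter $m$. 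Integrating the height component yields an incomplete elliptic integral of the second kind, namely $\zeta_{\mu,r}$ in \eqref{CPT}; the angle swept by the frame along the profile yields an incomplete elliptic integral of the third kind, namely $\psi_{\mu,r}$; and the radial data $\xi_{\mu,r},q^1_{\mu,r},q^2_{\mu,r}$ come out algebraic in $\mathrm{sn},\mathrm{cn},\mathrm{dn}$. The computations rest on the differentiation rules $\tfrac{d}{du}\mathrm{am}(u,m)=\mathrm{dn}(u,m)$, $\tfrac{d}{du}E(\mathrm{am}(u,m),m)=\mathrm{dn}^2(u,m)$, and the analogous formula for $\Pi$; assembling the integrated frame and reading off its translational part reproduces \eqref{PR1S} and the stated components of $\widetilde f_{\mu,r}$.

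With the explicit map in hand, the verification on the regular locus $\{\mathrm{sn}(u,m)\neq 0\}$ that the induced forms are $\mathbf g_{\mu,r}$ and $-\mathbf h_{\mu,r}$ (equivalently, $K=-1$ with second fundamental form diagonal and proportional to $du^2-dv^2$) is direct differentiation, lengthy but routine. The genuinely delicate point --- and the step I expect to be the main obstacle --- is global real-analyticity across the singular lines $\mathrm{sn}(u,m)=0$, where $f_{\mu,r}$ drops to rank $1$. There $q^1_{\mu,r}$ has a simple pole and $q^2_{\mu,r}$ a double pole, and $\psi_{\mu,r},\zeta_{\mu,r}$ are normalized by the complete integrals $K(m),E(m)$ exactly so as to remain finite; one must show that the double zero of the prefactor $\xi_{\mu,r}=\mathrm{dn}(u,m)\,\mathrm{sn}^2(u,m)\sqrt{m\,\mathrm{sn}^2(u,m)+\sinh^2 r}$ cancels these poles, so that the products $\xi_{\mu,r}q^1_{\mu,r}$ and $\xi_{\mu,r}q^2_{\mu,r}$ --- hence $\widetilde x_{\mu,r}$ and $\widetilde y_{\mu,r}$ --- extend real-analytically. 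A Taylor expansion of $\mathrm{sn},\mathrm{cn},\mathrm{dn}$ at the half-periods makes the cancellation and the residual rank-one behaviour explicit.

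Finally, having matched the fundamental forms on the dense regular set and established global real-analyticity, I would invoke the uniqueness recalled in Section~\ref{s:pre}: the regular part of $\widetilde f_{\mu,r}$ is then congruent to the regular part of $-(f_{\mu,r}\circ\Phi_{\mu,r})$, and since both maps are real analytic and agree up to this rigid motion on a dense set, they agree everywhere. This identifies the image of $\widetilde f_{\mu,r}$ with $-S_{\mu,r}$ and shows the parametrization is global. I expect that neither the frame integration nor any single fundamental-form computation is the crux, but rather the analytic continuation through the singularities in the third step.
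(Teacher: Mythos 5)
Your proposal follows essentially the same route as the paper: after the reciprocal-modulus transformation $m=\mu^{-1}$ and a linear reparametrization, one checks that $\widetilde f_{\mu,r}$ induces the first fundamental form of $\verb"L"_{\mu,r}=f_{\mu,r}\circ\Phi_{\mu,r}$ and the opposite of its second fundamental form, and then concludes by Bonnet-type uniqueness together with real-analytic continuation across the rank-one lines (the paper performs the form computation symbolically and leaves the pole cancellation in $\xi_{\mu,r}q^1_{\mu,r}$, $\xi_{\mu,r}q^2_{\mu,r}$ and the continuation implicit, which you correctly flag and handle). Your additional moving-frames derivation of the formulas matches the paper's stated method of obtaining them, so the proposal is correct and in agreement with the published argument.
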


%\begin{remark}
%According to our previous notation,
%$\mathrm{E}^{z,\rho_{\mu,r},1}_v$  is the 1-parameter
%subgroup of screw motions around the $z$-axis defined by
%\[
%\mathrm{E}^{z,\rho_{\mu,r},1}_v \Big((x,y,z)\Big)
%=\left(\cos(2\pi v)x-\sin(2\pi v)y,\sin(2\pi v)x+\cos(2\pi v)y,z+\rho_{\mu,r} v\right).
%   \]
%\end{remark}

\begin{proof}
Using any software of symbolic computation implementing elliptic functions such as \textsl{Mathematica},
one can see that the coefficients of the fundamental forms of $\widetilde{f}_{\mu,r}$ are
\begin{equation}\label{FNDFR1}
\begin{split}
\widetilde{g}^{\mu,r}_{11}&=m\,\mathrm{sn}^2(u,m)+\sinh^2(r),\\
\widetilde{g}^{\mu,r}_{22}&= \frac{4\pi^2\big(1+\cosh(2r)-2m\mathrm{sn}^2(u,m)\big)}{-1+2m+\cosh(2r)},\\
\widetilde{g}^{\mu,r}_{12}&=\frac{-\sqrt 2\pi \sinh(2r)}{\sqrt{-1+2m+\cosh(2r)}},\\
\widetilde{h}^{\mu,r}_{11}&=-\sqrt{m}\,\mathrm{dn}(u,m)\mathrm{sn}(u,m),\\
\widetilde{h}^{\mu,r}_{22}&=\frac{8\pi^2 \sqrt{m}\,\mathrm{dn}(u,m)\mathrm{sn}(u,m)}{-1+2m+\cosh(2r)},\\
\widetilde{h}^{\mu,r}_{12}&=0.
\end{split}
\end{equation}
Using the parameter
%modular
transformations of Jacobian functions (see \cite[p. 77]{La})
%\cite[p. 573]{AS})
\[
  \mathrm{sn}(u,h)=\frac{1}{\sqrt{h}}\,\mathrm{sn}(u \sqrt{h}, h^{-1}),\quad \mathrm{cn}(u,h)
  =\mathrm{dn}(u \sqrt{h},h^{-1})
   \]
and taking into account the expressions (\ref{FFL}), the coefficients $g_{ij}^{\mu,r}$ and $h_{ij}^{\mu,r}$ of the
fundamental forms of $\verb"L"_{\mu,r}$ can be written as
\begin{equation}\label{FFL2}\begin{split}
g^{\mu,r}_{11}&=\mathrm{sn}^2(u/\sqrt{m},m)+\frac{1}{m}\sinh^2 (r) ,\\
g^{\mu,r}_{22}&=\frac{1}{m}\big(\mathrm{dn}^2(u/\sqrt{m},m)+\sinh^2 (r) \big) ,\\
g^{\mu,r}_{12}&=-\frac{1}{m}\sinh(r)\cosh(r),\\
h^{\mu,r}_{11}&=- h^{\mu,r}_{22} =\frac{1}{\sqrt{m}}\,\mathrm{dn}(u/\sqrt{m},m)\,\mathrm{sn}(u/\sqrt{m},m) ,\\
h^{\mu,r}_{12}&=0.
\end{split}
\end{equation}
Next, consider the linear change of variables
\[
  \Phi'': \R^2 \ni (u,v)\mapsto \sqrt{m}\left(u,\frac{2\sqrt{2}\pi}{\sqrt{2m-1+\cosh(2r)}}v \right)\in \R^2.
    \]
From (\ref{FFL2}) it follows that the first quadratic form of the reparametrization
$L_{\mu,r}\circ \Phi''$ coincide with the first quadratic form of $\widetilde{f}_{\mu,r}$ and that the second quadratic form of $L_{\mu,r}\circ \Phi''$ is the opposite of the second quadratic form of $\widetilde{f}_{\mu,r}$. This proves the result.
\end{proof}

%\begin{figure}[ht]
%\begin{center}
%\includegraphics[height=4.5cm,width=9cm]{FG1.pdf}
%\caption{The graphs of the functions $\psi_{\frac{1}{2},1}$ (thick line)
%and $\widehat{\psi}_{\frac{1}{2},1}$ (thin line).}\label{FIG1}
%\end{center}
%\end{figure}

\subsection{Pseudospherical helicoids of electric type}

For given $\mu\in (0,1)$ and $r>0$, let
\begin{equation}\label{CPBIS}
\begin{split}
m:=\mu &^{-1}>1,\\
\psi_{\mu,r}(u)&=-\frac{\sqrt{m+\sinh^2 (r)}\,\coth(r)}{\sqrt{m}}
\Pi(-\mathrm{csch}^2(r),\mathrm{am}(\sqrt{m}u,\frac{1}{m}),\frac{1}{m}),\\
\zeta_{\mu,r}(u)&=\frac{(\cosh^2(r)+m-1)u-
\sqrt{m}\,E(\mathrm{am}(\sqrt{m}u,\frac{1}{m}),\frac{1}{m})}{\sqrt{m+\sinh^2(r)}},\\
\end{split}
\end{equation}
Note that $\psi_{\mu,r}$ is quasi-periodic, with quasi-period $2K(1/m)/\sqrt{m}$.
Next, define the real-analytic functions
\begin{equation}\label{CPBIS}
\begin{split}
\xi_{\mu,r}(u)&=\sqrt{\frac{m}{m\,\mathrm{sn}^2(u,m)+\sinh^2(r)}},\\
q^1_{\mu,r}(u)&=\frac{\cosh(r)\sinh(r)}{m+\sinh^2(r)}\mathrm{cn}(u,m),\\
q^2_{\mu,r}(u)&= \frac{\mathrm{dn}(u,m)\,\mathrm{sn}(u,m)}{\sqrt{m+\sinh^2(r)}},
\end{split}
\end{equation}
and the constant
$$\\
\rho_{\mu,r}=-2\pi\frac{\cosh(r)\sinh(r)}{m+\sinh^2(r)}.
$$
Let $\widetilde{\Gamma}_{\mu,r}\subset \R^3$ be the trajectory of the parametrized curve
\[
 \widetilde{\gamma}_{\mu,r}=(\widetilde{x}_{\mu,r},\widetilde{y}_{\mu,r},
  \widetilde{z}_{\mu,r}):\R\to \R^3,
  \]
defined by
\begin{equation}\label{PR1S}
 \begin{split}
\widetilde{x}_{\mu,r}&=\xi_{\mu,r}\left(\cos(\psi_{\mu,r})q^1_{\mu,r} - \sin(\psi_{\mu,r})q^2_{\mu,r} \right),\\
\widetilde{y}_{\mu,r}&=\xi_{\mu,r}\left(\sin(\psi_{\mu,r})q^1_{\mu,r} + \cos(\psi_{\mu,r})q^2_{\mu,r} \right),\\
\widetilde{z}_{\mu,r}&=\zeta_{\mu,r}.
 \end{split}
   \end{equation}

We are in a position to prove the following.

\begin{thm}\label{R3}
Let $S_{\mu,r}$ be a pseudospherical helicoid of electric type, that is, a
pseudospherical helicoid with parameters $\mu\in (0,1)$ and $r>0$.
Then, the real-analytic map $\widetilde{f}_{\mu,r}=
(\widetilde{f}_{\mu,r}^1,\widetilde{f}_{\mu,r}^2,\widetilde{f}_{\mu,r}^3):\R^2\to \R^3$ with components
\[\begin{split}
  \widetilde{f}_{\mu,r}^1(u,v)&=
   \cos(2\pi v)\widetilde{x}_{\mu,r}(u)-\sin(2\pi v)\widetilde{y}_{\mu,r}(u),\\
  \widetilde{f}_{\mu,r}^2(u,v)&= \sin(2\pi v)\widetilde{x}_{\mu,r}(u)+\cos(2\pi v)\widetilde{y}_{\mu,r}(u),\\
  \widetilde{f}_{\mu,r}^3(u,v)&=\widetilde{z}_{\mu,r}(u)+\rho_{\mu,r}v
  \end{split}
     \]
is a global parametrization of $-S_{\mu,r}$.
%with parameters $\mu\in (0,1)$ and $r>0$.
\end{thm}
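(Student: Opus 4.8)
The plan is to argue exactly as in the proof of Theorem \ref{R2}, now in the electric regime $\mu\in(0,1)$, where $m:=\mu^{-1}>1$. By construction $\widetilde{f}_{\mu,r}$ is the orbit of the profile curve $\widetilde{\gamma}_{\mu,r}$ under the one-parameter helicoidal group with screw axis the $z$-axis and pitch $|\rho_{\mu,r}|$, so it is automatically a helicoidal map; moreover it is real-analytic on all of $\R^2$, since for $m>1$ and $r>0$ one has $m\,\mathrm{sn}^2(u,m)\leq 1$, whence the denominator $m\,\mathrm{sn}^2(u,m)+\sinh^2(r)$ entering $\xi_{\mu,r}$ stays bounded away from $0$, while $\psi_{\mu,r}$ and $\zeta_{\mu,r}$ are everywhere defined. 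The content of the statement is therefore the identification of this helicoid, up to rigid motion, with the antipodal image $-S_{\mu,r}$, and this I would obtain by matching fundamental forms through the fundamental theorem of surface theory (Bonnet's theorem).

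First I would compute, with the aid of a symbolic package implementing elliptic functions, the coefficients of the first and second fundamental forms of $\widetilde{f}_{\mu,r}$. The essential inputs are the derivative rule $\frac{d}{du}\mathrm{am}(u,m)=\mathrm{dn}(u,m)$ and the defining derivatives of the incomplete elliptic integrals $E$ and $\Pi$, which were built into $\zeta_{\mu,r}$ and $\psi_{\mu,r}$ precisely so that $\zeta_{\mu,r}'$ and $\psi_{\mu,r}'$ become algebraic in $\mathrm{sn}$, $\mathrm{cn}$, $\mathrm{dn}$; together with the Jacobi identities $\mathrm{sn}^2+\mathrm{cn}^2=1$ and $\mathrm{dn}^2+m\,\mathrm{sn}^2=1$ these reduce every coefficient to a rational function of the Jacobian functions. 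Because the planar rotation matrix built from $(\cos\psi_{\mu,r},-\sin\psi_{\mu,r};\sin\psi_{\mu,r},\cos\psi_{\mu,r})$ is orthogonal, all dependence on $\psi_{\mu,r}$ drops out of the metric; mirroring \eqref{FNDFR1} I expect to find $\widetilde{h}_{12}^{\mu,r}=0$ and $\widetilde{h}_{22}^{\mu,r}$ proportional to $-\widetilde{h}_{11}^{\mu,r}$ up to the scaling in the $v$-variable.

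Next I would transform the fundamental forms \eqref{FFL} of the reparametrization $\mathtt{L}_{\mu,r}=f_{\mu,r}\circ\Phi_{\mu,r}$ into a directly comparable shape. Here the two presentations sit at different Jacobi parameters: \eqref{FFL} is written with parameter $\mu=1/m<1$, whereas the forms just computed carry parameter $m>1$. Applying the parameter transformations $\mathrm{sn}(u,m)=m^{-1/2}\mathrm{sn}(u\sqrt m,1/m)$ and $\mathrm{cn}(u,m)=\mathrm{dn}(u\sqrt m,1/m)$ of \cite[p.~77]{La} brings both sets of coefficients to the common parameter $\mu$. I would then exhibit an explicit linear change of variables $\Phi''$ of the same type as in the magnetic case (a rescaling of $u$ and of $v$) for which the first fundamental form of $\mathtt{L}_{\mu,r}\circ\Phi''$ coincides with that of $\widetilde{f}_{\mu,r}$ while its second fundamental form is the exact opposite. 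This sign reversal on the second form is what produces the antipodal map and accounts for the $-S_{\mu,r}$ in the statement.

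Finally, on the open dense set where both maps are immersions, the fundamental theorem of surface theory yields a rigid motion carrying $\mathtt{L}_{\mu,r}\circ\Phi''$ onto $-\widetilde{f}_{\mu,r}$ (equal first forms and opposite second forms force congruence up to $-\mathrm{id}_{\R^3}$); by real-analyticity this congruence extends across the isolated singular points to all of $\R^2$, so that $\widetilde{f}_{\mu,r}$ globally parametrizes $-S_{\mu,r}$. The main obstacle is the symbolic identity verification of the two preceding steps, and within it two delicate points: confirming that the cross term generated by the pitch $\rho_{\mu,r}v$ together with the helicoidal rotation reproduces exactly the $-\tfrac1m\sinh(r)\cosh(r)$ cross term inherited from \eqref{FFL}, and keeping the sign and scaling bookkeeping precise enough that the second forms come out as genuine negatives rather than merely proportional, since it is this that pins down the antipodal image $-S_{\mu,r}$.
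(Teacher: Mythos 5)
Your proposal is correct and takes essentially the same route as the paper: the paper's own proof simply observes that the fundamental forms of $\widetilde{f}_{\mu,r}$ in the electric case again coincide with \eqref{FNDFR1} and then repeats verbatim the argument of Theorem \ref{R2} (parameter transformation of the Jacobian functions to bring \eqref{FFL} to a comparable shape, an explicit linear reparametrization $\Phi''$ under which the first forms agree and the second forms are opposite, whence the surface is $-S_{\mu,r}$). Your extra remarks---global real-analyticity because $m\,\mathrm{sn}^2(u,m)+\sinh^2(r)\geq \sinh^2(r)>0$ for $m>1$, and the explicit appeal to Bonnet's theorem on the regular set followed by analytic continuation across the singularities---merely spell out steps the paper leaves implicit.
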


\begin{proof}
Since the fundamental forms of $\widetilde{f}_{\mu,r}$ are as in (\ref{FNDFR1}), we can argue
as in the proof of Theorem \ref{R2}.
\end{proof}

\begin{remark}
Observe, however, that the profile curves computed in the two theorems above are not planar (see Figure \ref{FIG2}).
\end{remark}

\begin{figure}[ht]
\begin{center}
\includegraphics[height=6cm,width=6cm]{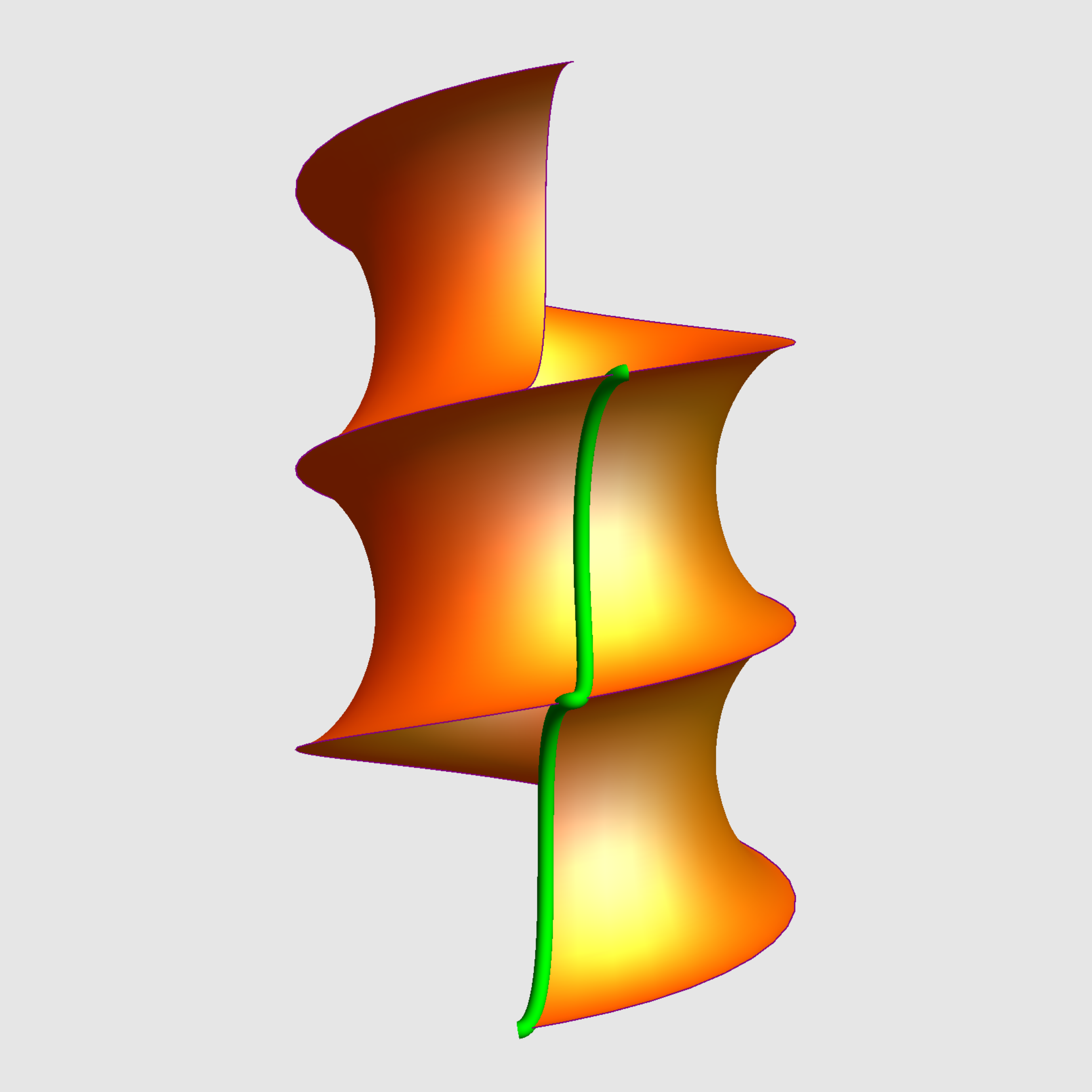}
\includegraphics[height=6cm,width=6cm]{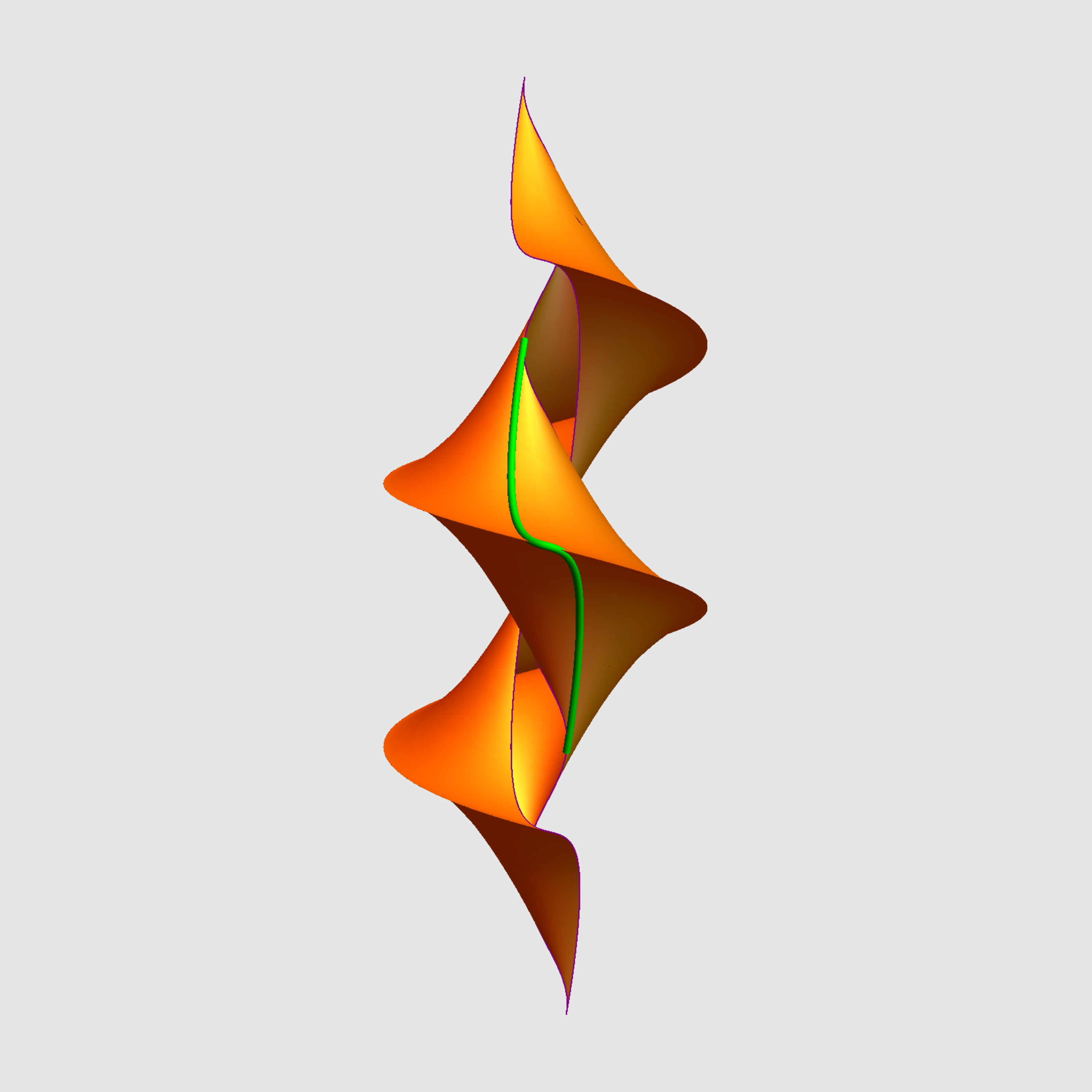}
\caption{The fundamental domains of pseudospherical twisted columns of magnetic (on the left)
and electric (on the right) types and their non-planar profiles.}\label{FIG2}
\end{center}
\end{figure}

\section{The proof of the main theorems}\label{s:proof-mthm}

This section proves Theorems \ref{thm:main-1} and \ref{thm:main}. The proof
is divided into five lemmas.
Lemmas 8 and 9 (cf. Section \ref{ss:plane-profine}) prove that
pseudospherical helicoids of magnetic
and electric type are of translational kind (hence Theorem \ref{thm:main-1})
by determining in both cases
the planar profiles of the pseudospherical helicoids
and by computing
%for computing
the corresponding phenomenological invariants in terms of the parameters.
Lemmas 10 and 11
(cf. Section \ref{ss:ps-twisted}) prove
the existence of pseudospherical twisted columns of magnetic and electric type with
prescribed wave number and aspect ratio.
Finally, Lemma 12
(cf. Section \ref{ss:conclusion}) proves that
pseudospherical twisted columns with different invariants cannot be congruent.

\subsection{Planar profiles and geometric invariants of pseudospherical helicoids}\label{ss:plane-profine}

We start our analysis with pseudospherical helicoids of magnetic type.

\begin{lemma}
Any pseudospherical helicoid of magnetic type with parameters $\mu >1$ and $r>0$ is of translational kind.
Moreover, it has parity $\epsilon =-1$ and its pitch, wavelength and aspect ratio are given, respectively,  by
\[\begin{split}
\mathfrak{p}_{\mu,r}&=2\pi\frac{\cosh(r)\sinh(r)}{m+\sinh^2(r)},\\
\mathfrak{w}_{\mu,r}&=2\frac{\cosh^2(r)\Pi(-\sinh^2(r),m)-E(m)}{\sqrt{m+\sinh^2(r)}},\\
\mathfrak{d}_{\mu,r}&=\frac{2\sqrt{m+\sinh^2(r)}\big(\cosh^2(r)\Pi(-\sinh^2 r,m)-E(m)\big)}{\sqrt{m(1-m)}},
\end{split}\]
where $m=\mu^{-1}\in (0,1)$
\end{lemma}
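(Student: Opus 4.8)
The plan is to read off the helicoidal structure directly from the explicit parametrization $\widetilde{f}_{\mu,r}$ of Theorem \ref{R2} and then to exhibit an explicit planar profile by ``unwinding'' the screw motion. Writing $\widetilde{f}_{\mu,r}(u,v)=\mathrm{R}^{\ell}_{2\pi v}\big(\widetilde{\gamma}_{\mu,r}(u)\big)+\rho_{\mu,r}\,v\,\vec{\ell}$ with $\ell$ the $z$-axis, one recognizes the action of the screw group $\mathrm{E}^{\ell,\mathfrak{p},\eta}_{v}$; since $\rho_{\mu,r}=-2\pi\cosh(r)\sinh(r)/(m+\sinh^2 r)<0$ for $r>0$, this reads off the helicity $\eta=-1$ and the pitch $\mathfrak{p}_{\mu,r}=|\rho_{\mu,r}|=2\pi\cosh(r)\sinh(r)/(m+\sinh^2 r)$, the first asserted formula (note $-S_{\mu,r}$ is congruent to $S_{\mu,r}$ via $-\mathrm{id}_{\R^3}$, which preserves all metric invariants and the parity). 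Passing to cylindrical coordinates about $\ell$, the radial distance of the profile is $\widetilde{\mathfrak{r}}(u)=\big((\widetilde{x}_{\mu,r})^2+(\widetilde{y}_{\mu,r})^2\big)^{1/2}=\xi_{\mu,r}(u)\big((q^1_{\mu,r})^2+(q^2_{\mu,r})^2\big)^{1/2}$ and its polar angle is the continuous function $\Theta(u)=\psi_{\mu,r}(u)+\arg\!\big(q^1_{\mu,r}(u)+i\,q^2_{\mu,r}(u)\big)$. I would then define the planar curve $\Gamma_{\mu,r}\subset\mathcal{H}_{\ell}$ by $u\mapsto\big(\widetilde{\mathfrak{r}}(u),z^*(u)\big)$ with $z^*(u)=\widetilde{z}_{\mu,r}(u)-\tfrac{\rho_{\mu,r}}{2\pi}\Theta(u)$, and check that applying $\mathrm{E}^{\ell,\mathfrak{p},\eta}_{v}$ at $v=\Theta(u)/2\pi$ sends the point of $\Gamma_{\mu,r}$ with parameter $u$ exactly to $\widetilde{\gamma}_{\mu,r}(u)$; hence $\Gamma_{\mu,r}$ and $\widetilde{\Gamma}_{\mu,r}$ generate the same surface and $\Gamma_{\mu,r}$ is a planar profile.

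The core computation is the quasi-period analysis in $u$. Using $\mathrm{am}(u+2K(m),m)=\pi+\mathrm{am}(u,m)$ together with $E(\varphi+\pi,m)=E(\varphi,m)+2E(m)$ and $\Pi(n,\varphi+\pi,m)=\Pi(n,\varphi,m)+2\Pi(n,m)$, and the fact that $(q^1_{\mu,r},q^2_{\mu,r})\mapsto-(q^1_{\mu,r},q^2_{\mu,r})$ while $\xi_{\mu,r}$ and $\widetilde{\mathfrak{r}}$ are invariant under $u\mapsto u+2K(m)$, I would show that $\widetilde{\mathfrak{r}}$ is $2K(m)$-periodic (and not $K(m)$-periodic, since it depends on $\mathrm{sn}^2,\mathrm{cn}^2,\mathrm{dn}^2$) and that $z^*(u+2K(m))-z^*(u)$ is a constant, equal after simplification to $\mathfrak{w}_{\mu,r}$. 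This exhibits $\mathrm{T}_{\mathfrak{w}_{\mu,r}\vec{\ell}}$ as the generator of the symmetry group $\mathcal{T}$ of $\Gamma_{\mu,r}$, a nontrivial discrete group of translations along $\ell$; since $\Gamma_{\mu,r}$ is not rectilinear, this gives condition (1). Collapsing the increment to the stated closed form requires the reciprocity identity for the complete integral of the third kind, valid because the two characteristics satisfy $(-m\,\mathrm{csch}^2 r)(-\sinh^2 r)=m$, namely
\[
\Pi(-m\,\mathrm{csch}^2 r,m)+\Pi(-\sinh^2 r,m)=K(m)+\frac{\pi\tanh r}{2\sqrt{m+\sinh^2 r}};
\]
substituting this turns the third-kind term produced by $\psi_{\mu,r}$ (characteristic $-m\,\mathrm{csch}^2 r$) into the one with characteristic $-\sinh^2 r$ appearing in $\mathfrak{w}_{\mu,r}$.

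It remains to verify conditions (2)--(3), the parity, and the radii. For condition (3), an explicit simplification gives $\widetilde{\mathfrak{r}}(u)^2=\tfrac{m}{(m+\sinh^2 r)(m\,\mathrm{sn}^2(u,m)+\sinh^2 r)}\big(\mathrm{dn}^2(u,m)\mathrm{sn}^2(u,m)+\tfrac{\cosh^2 r\sinh^2 r}{m+\sinh^2 r}\mathrm{cn}^2(u,m)\big)$, bounded below by a positive constant, so $\Gamma_{\mu,r}\cap\ell=\emptyset$. For the singular set I would compute the determinant of the first fundamental form from \eqref{FNDFR1}, obtaining $\det\widetilde{g}^{\mu,r}=\tfrac{4\pi^2 m\,\mathrm{sn}^2(u,m)(1-m\,\mathrm{sn}^2(u,m))}{m+\sinh^2 r}$, which vanishes precisely when $\mathrm{sn}(u,m)=0$, i.e. at $u\in 2K(m)\Z$. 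Thus the fundamental domain $[0,2K(m))$ contains exactly one singular point, whence $\mathfrak{h}=1$ and the parity is $\epsilon=(-1)^1=-1$; a local Taylor expansion of $u\mapsto(\widetilde{\mathfrak{r}}(u),z^*(u))$ there shows the profile has an ordinary cusp. Simplicity (condition (2)) follows by checking $z^*$ is strictly increasing on the open period, so $\Gamma_{\mu,r}$ is a graph over $\ell$ between consecutive cusps. Finally, from the displayed expression the radial extrema over a period occur at $\mathrm{sn}^2=0$ and $\mathrm{sn}^2=1$, giving $\mathfrak{r}^+=\sqrt{m}\cosh r/(m+\sinh^2 r)$ and $\mathfrak{r}^-=\sqrt{m(1-m)}/(m+\sinh^2 r)$, so the inner aspect ratio $\mathfrak{d}_{\mu,r}=\mathfrak{w}_{\mu,r}/\mathfrak{r}^-$ reduces to the stated formula.

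The main obstacle is the second step: carrying the $\psi_{\mu,r}$-driven angular increment through the unwinding and reconciling the two distinct third-kind characteristics via the reciprocity identity, so that the height increment closes up to exactly $\mathfrak{w}_{\mu,r}$. The positivity $\cosh^2 r\,\Pi(-\sinh^2 r,m)>E(m)$ (ensuring $\mathfrak{w}_{\mu,r}>0$), the monotonicity of $\widetilde{\mathfrak{r}}^2$ in $\mathrm{sn}^2$ needed for the radii, and the strict monotonicity of $z^*$ needed for simplicity are the remaining analytic points; all rest on elementary monotonicity estimates for the elliptic integrals in \cite{La}.
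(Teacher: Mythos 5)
Your proposal is correct and follows essentially the same route as the paper's proof: unwinding the screw motion of the Theorem~\ref{R2} parametrization via a continuous angular determination (your $\Theta$ is exactly $2\pi\theta_{\mu,r}$, your $\arg$-branch playing the role of the paper's $\arctan$ plus the step correction $\pi([u/2K(m)]+\delta_-(u))$), reading the pitch from $\rho_{\mu,r}$, extracting the wavelength from the $2K(m)$-quasi-period increment, locating one ordinary cusp per period, and forming $\mathfrak{d}=\mathfrak{w}/\mathfrak{r}^-$ --- with the added merit that you make explicit two steps the paper leaves implicit, namely the reciprocity identity $\Pi(-m\,\mathrm{csch}^2 r,m)+\Pi(-\sinh^2 r,m)=K(m)+\frac{\pi\tanh r}{2\sqrt{m+\sinh^2 r}}$ (valid since the characteristics multiply to $m$), which indeed makes the height increment close up to exactly $\mathfrak{w}_{\mu,r}$, and the computation $\det\widetilde{g}^{\mu,r}=4\pi^2 m\,\mathrm{sn}^2(u,m)\big(1-m\,\mathrm{sn}^2(u,m)\big)/(m+\sinh^2 r)$ identifying the singular set. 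Note also that your inner radius $\mathfrak{r}^-=\sqrt{m(1-m)}/(m+\sinh^2 r)$ is the value consistent with the lemma's stated aspect-ratio formula (and with a direct evaluation of the profile at $u=K(m)$), so it corrects a misplaced radical in the paper's intermediate expression $\mathfrak{r}^-_{\mu,r}=\sqrt{m(1-m)/(m+\sinh^2 r)}$.
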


\begin{proof}
The profile $\widehat{\Gamma}_{\mu,r}$, although not a plane curve, does not intersect the screw axis.
Therefore, for each $u\in \R$, we may consider the oriented upper half-plane $\mathcal{H}_{\mu,r}(u)$
bounded by the screw axis and passing through the point $\widetilde{\gamma}_{\mu,r}(u)$.
Let $2\pi \theta_{\mu,r}:\R\to \R$ be an analytic determination of the amplitude of the angle between the
oriented upper half-plane
$\{ (x,y,z) \in \R^3 \mid y=0,\, x>0\}$
%$y=0$, $x>0$
%$Oxz$,
%$x>0$ in the coordinate $xz$-plane
and the oriented half-plane $\mathcal{H}_{\mu,r}(u)$.
Denote by $[a]$ the integer part of a real number and by $\delta_-:\R\to \R$ the unit step function
\[
 \begin{split}
  \delta_-(u)&=1,\quad u\ge 0,\\
   \delta_-(u)&=0,\quad u< 0.
    \end{split}\]
%Then we may take
We can take as $\theta_{\mu,r}$ the unique smooth function
%$\theta_{\mu,r}$,
defined on the whole real line, such that,
for each $u\in \mathbb R \setminus\{2hK(m), \, h\in \mathbb Z\}$,
\[
 \theta_{\mu,r}(u)=\frac{1}{2\pi}\left(\psi_{\mu,r}(u)+
 \arctan\left(\frac{q^2_{\mu,r}(u)}{q^1_{\mu,r}(u)}\right)+
  \pi([u/2K(m)]+\delta_-(u))\right).
    \]

\begin{figure}[ht]
\begin{center}
\includegraphics[height=4.5cm,width=8cm]{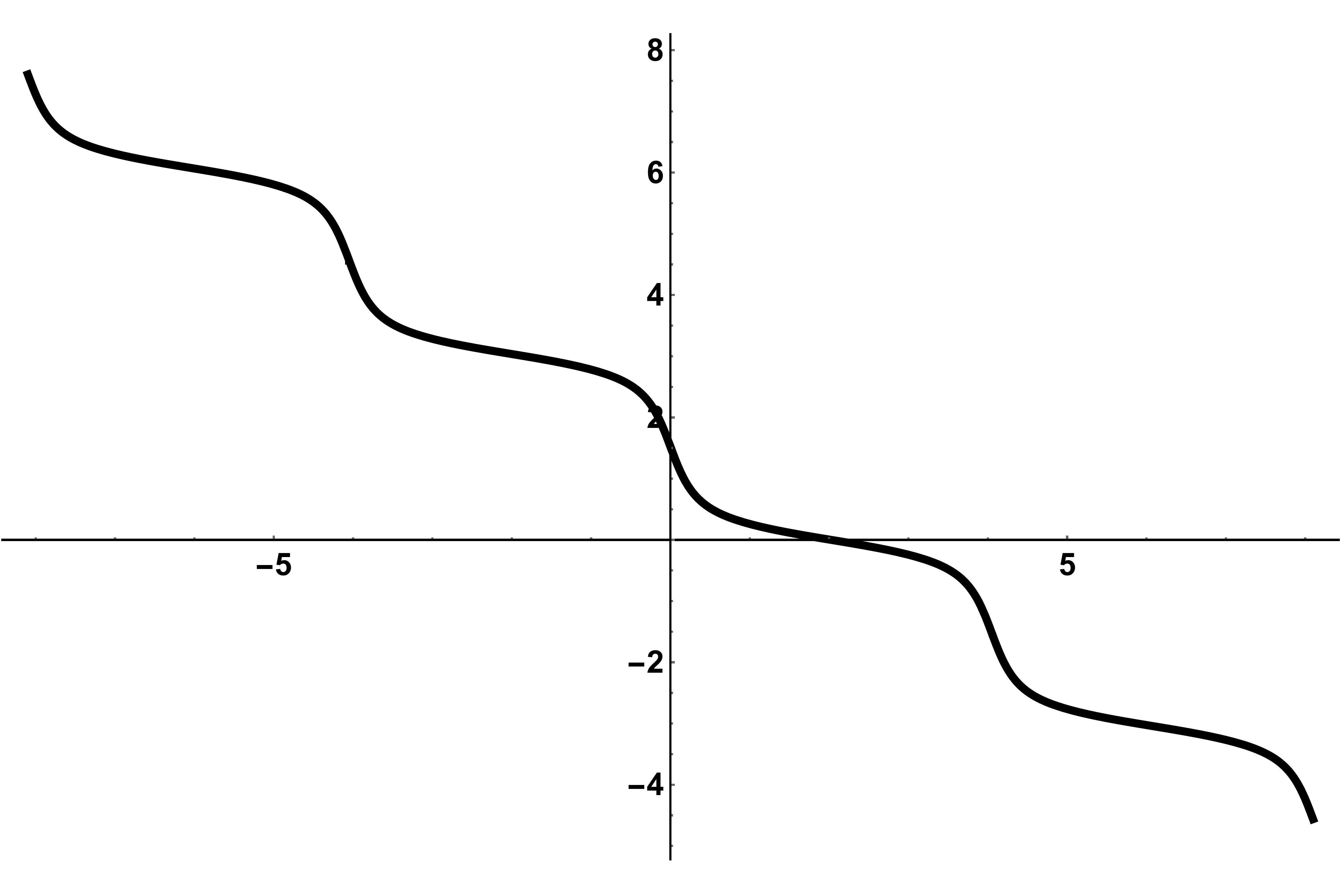}
\caption{The graph of the angular function $\theta_{\mu,r}$, $\mu\approx 1.50742$ and $r\approx 0.208$.}\label{FIG3}
\end{center}
\end{figure}

Consequently, the plane profile of the helicoid is the real-analytic curve $\Gamma_{\mu,r}$ in the upper
half-plane $\{ (x,y,z) \in \R^3 \mid y=0,\, x>0\}$,
%$xz$, $x>0$,
parametrized by the map
\[
 \gamma_{\mu,r}=\left(q_{\mu,r},0,
\zeta_{\mu,r} - \rho_{\mu,r}\theta_{\mu,r}\right),
  \]
where
$$
  q_{\mu,r}=\xi_{\mu,r}\sqrt{(q_{\mu,r}^1)^2+(q_{\mu,r}^2)^2}.
    $$
Correspondingly, we obtain the following parametrization of the helicoid in terms of the plane profile
$\Gamma_{\mu,r}$,
{\small
\begin{equation}\label{ppp}
f_{\mu,r}(u,v)=\Big(q_{\mu,r}(u)\cos(2\pi v),
q_{\mu,r}(u)\sin(2\pi v),\zeta_{\mu,r}(u)-\rho_{\mu,r}\theta_{\mu,r}(u) +\rho_{\mu,r} v\Big).
\end{equation}
}
It then follows that the pitch of the helicoid is
$$
   \mathfrak{p}_{\mu,r}=-\rho_{\mu,r}=2\pi\frac{\cosh(r)\sinh(r)}{m+\sinh^2(r)}.
      $$
\begin{figure}[ht]
\begin{center}
\includegraphics[height=6cm,width=3cm]{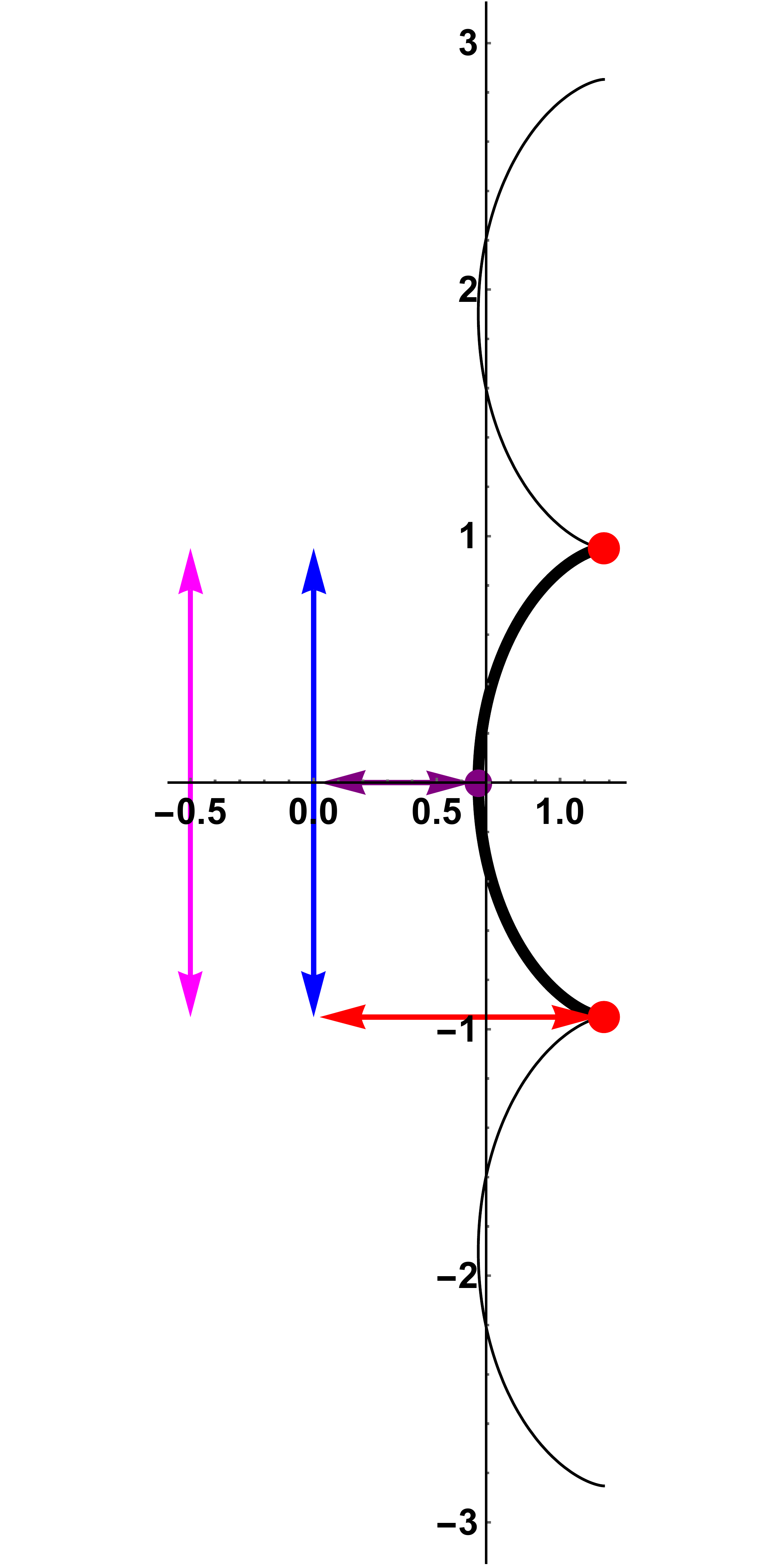}
\includegraphics[height=6cm,width=3cm]{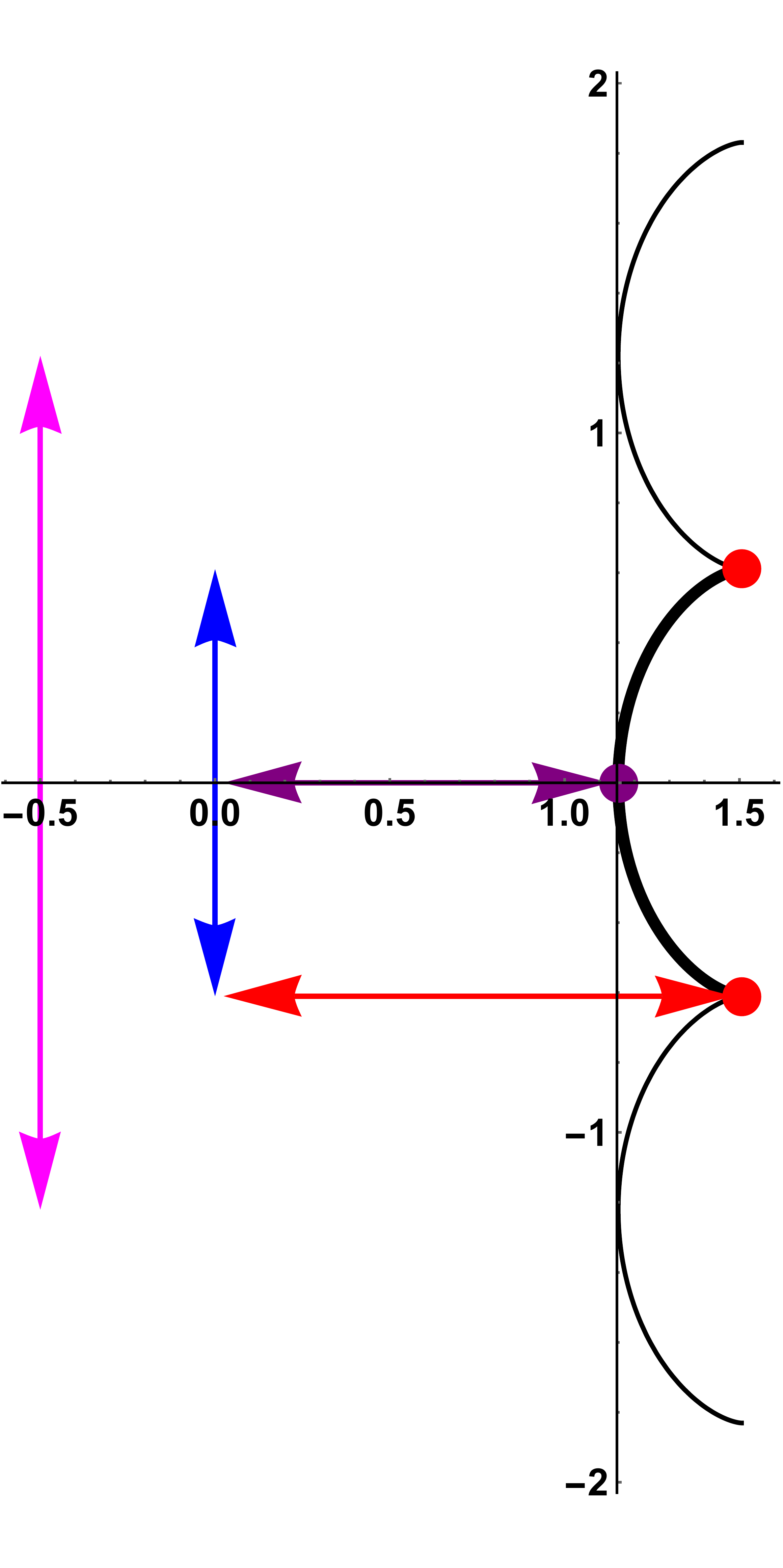}
\caption{The planar profiles of two twisted columns of magnetic type with wave numbers 1 and 2,
respectively.}\label{FIG4}
\end{center}
\end{figure}
The first component of $\gamma_{\mu,r}$ is periodic with minimal period $\omega_{\mu}=2K(m)$
and the third component is a strictly decreasing quasi-periodic function with quasi-period
$\omega_{\mu}$.
Consequently, $\Gamma_{\mu,r}$ is invariant under the translational group along the $z$-axis
and its wavelength is
\[
 \mathfrak{w}_{\mu,r}=d[\gamma_{\mu,r}(2K(m)),\gamma_{\mu,r}(0)]=
2\frac{\cosh^2(r)\Pi(-\sinh^2 (r),m)-E(m)}{\sqrt{m+\sinh^2 (r)}}.
     \]
The profile has countably many cusps of order 1 (ordinary cusps)
%the first kind,
located at the points
$$
\gamma_{\mu,r}(2nK(m))=\gamma_{\mu,r}(0)+n\mathfrak{w}_{\mu,r}\vec{k},\quad n\in \Z,
 $$
where $\vec{k}$ denotes the unit $z$-coordinate vector.
All the other points are regular. This implies that the helicoid is of translational type with parity $\epsilon=-1$.
%At the singular points, the profile reaches the maximum distance from the screw axis.
%Hence the outer radius is
%$$
% \mathfrak{r}^{+}_{\mu,r}=\frac{\sqrt{m}\cosh(r)}{m+\sinh^2(r)}.
%  $$
The profile reaches the minimum distance from the screw axis at the points
$$
 \gamma_{\mu}(K(m)+2nK(m))=\gamma_{\mu,r}(K(m))+n\mathfrak{w}_{\mu,r}\vec{k}.
  $$
The inner radius is thus
$$
  \mathfrak{r}^{-}_{\mu,r}=\sqrt{\frac{m(1-m)}{m+\sinh^2(r)}}.
   $$
This implies that the aspect
ratio\footnote{Since $\epsilon=-1$, the aspect ratio is $\mathfrak{w}/\mathfrak{r}^-$.}
is
$$
   \mathfrak{d}_{\mu,r}=\frac{2\sqrt{m+\sinh^2(r)}\big(\cosh^2(r)\Pi(-\sinh^2(r),m)-E(m)\big)}{\sqrt{m(1-m)}},
     $$
which completes the proof.
\end{proof}

As for the pseudospherical helicoids of electric type, we have the following.

\begin{lemma}
A pseudospherical helicoid of electric type with parameters $\mu\in (0,1)$ and $r>0$ is of translational kind.
Moreover, It has parity $\epsilon=1$ and its pitch, wavelength and aspect ratio are given, respectively, by
\[
\begin{split}
\mathfrak{p}_{\mu,r}& = 2\pi\frac{\cosh(r)\sinh(r)}{m+\sinh^2(r)},\\
\mathfrak{w}_{\mu,r}& = 2\frac{(m+\cosh^2(r)-1)K(\frac{1}{m})-mE(\frac{1}{m})-\cosh^2(r)\Pi(-\mathrm{csch}^2(r),\frac{1}{m})}
{\sqrt{m(m+\sinh^2(r))}},\\
\mathfrak{d}_{\mu,r}& = \frac{2\sqrt{m+\sinh^2(r)}}{m\cosh(r)}
\Big((m+\cosh^2 (r) -1)K(\frac{1}{m})-mE(\frac{1}{m}) \\
&\qquad -\cosh^2(r)\,\Pi(-\mathrm{csch}^2(r),\frac{1}{m})\Big),
\end{split}
\]
where $m=\mu^{-1}>1$
\end{lemma}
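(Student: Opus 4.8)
The plan is to mirror, step by step, the argument of the preceding (magnetic-type) lemma, the only genuinely new feature being a second family of singularities arising from the zeros of $\mathrm{dn}$. Starting from the global parametrization $\widetilde f_{\mu,r}$ of $-S_{\mu,r}$ furnished by Theorem \ref{R3}, I first note that the space profile $\widetilde\Gamma_{\mu,r}$ avoids the screw axis, since $\xi_{\mu,r}^2\big((q^1_{\mu,r})^2+(q^2_{\mu,r})^2\big)$ is bounded below by a positive constant for $r>0$, $m>1$. I then introduce the analytic angular function $\theta_{\mu,r}$ measuring the azimuth of the half-plane through $\widetilde\gamma_{\mu,r}(u)$ (assembled from $\psi_{\mu,r}$, $\arctan(q^2_{\mu,r}/q^1_{\mu,r})$, and an integer-step correction, exactly as in the magnetic case) and untwist the surface into the planar profile $\gamma_{\mu,r}=\big(q_{\mu,r},0,\zeta_{\mu,r}-\rho_{\mu,r}\theta_{\mu,r}\big)$ with $q_{\mu,r}=\xi_{\mu,r}\sqrt{(q^1_{\mu,r})^2+(q^2_{\mu,r})^2}$. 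The pitch is then read off as $\mathfrak{p}_{\mu,r}=-\rho_{\mu,r}=2\pi\cosh(r)\sinh(r)/(m+\sinh^2(r))$.

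Next I analyse periodicity via the reciprocal-modulus transformation $\mathrm{sn}(u,m)=\mathrm{sn}(\sqrt m\,u,1/m)/\sqrt m$, $\mathrm{cn}(u,m)=\mathrm{dn}(\sqrt m\,u,1/m)$, $\mathrm{dn}(u,m)=\mathrm{cn}(\sqrt m\,u,1/m)$, which rewrites every ingredient in terms of the genuine modulus $1/m\in(0,1)$. This shows $q_{\mu,r}$ is periodic of minimal period $T=2K(1/m)/\sqrt m$ (the inner and outer radii below differ, so no smaller period occurs) and that the height is quasi-periodic with the same quasi-period, whence $\Gamma_{\mu,r}$ is invariant under a nontrivial translation group along the axis. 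The wavelength is the net height gain of $\gamma_{\mu,r}$ over one period $T$; using the quasi-period laws $\mathrm{am}(w+2K)=\mathrm{am}(w)+\pi$, $E(\varphi+\pi)=E(\varphi)+2E$ and $\Pi(n,\varphi+\pi)=\Pi(n,\varphi)+2\Pi(n)$ (all at modulus $1/m$), the increment of $\zeta_{\mu,r}$ yields the $K(1/m)$ and $E(1/m)$ terms, while the increment of $\psi_{\mu,r}$, entering through $-\rho_{\mu,r}\theta_{\mu,r}$, yields exactly $-\cosh^2(r)\,\Pi(-\mathrm{csch}^2(r),1/m)$. Since $q^1_{\mu,r}$ is a positive multiple of $\mathrm{cn}(u,m)=\mathrm{dn}(\sqrt m\,u,1/m)>0$, the planar vector $(q^1_{\mu,r},q^2_{\mu,r})$ never leaves the right half-plane, so its azimuthal winding over a period vanishes and no integer multiple of the pitch enters; assembling the two increments gives the stated $\mathfrak{w}_{\mu,r}$.

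The parity is where the electric case departs from the magnetic one, and this is the main obstacle. The surface singularities are the zeros of $\widetilde h^{\mu,r}_{11}=-\sqrt m\,\mathrm{dn}(u,m)\mathrm{sn}(u,m)$; in contrast to $m\in(0,1)$, now $\mathrm{dn}(\cdot,m)$ also vanishes, so the singular set is $\{u=nK(1/m)/\sqrt m\}$, giving two singularities per fundamental domain, at the $\mathrm{sn}$-zero $u=0$ and the $\mathrm{dn}$-zero $u=K(1/m)/\sqrt m$. To identify both as ordinary cusps I exploit the reflection symmetries of the Jacobian functions about these points: in each case $q_{\mu,r}$ is even and $\zeta_{\mu,r}-\rho_{\mu,r}\theta_{\mu,r}$ is odd about the singular value, so the profile has local form $(\text{const}+a\,t^2,\,0,\,b\,t+c\,t^3)$, and the ordinary-cusp condition reduces to $b=0$, i.e. the vanishing of the height derivative. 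This is verified by direct computation of $\zeta'_{\mu,r}$, $\psi'_{\mu,r}$ and $(\arctan(q^2_{\mu,r}/q^1_{\mu,r}))'$ at both points: the rotational contribution $\rho_{\mu,r}\theta'_{\mu,r}$ cancels $\zeta'_{\mu,r}$ exactly, the common value being $\sinh^2(r)/\sqrt{m+\sinh^2(r)}$ at $u=0$ and $\cosh^2(r)/\sqrt{m+\sinh^2(r)}$ at $u=K(1/m)/\sqrt m$. Confirming that this new $\mathrm{dn}$-type singularity is a genuine ordinary cusp (rather than a higher-order or regular point) is the one step with no magnetic analogue and is the crux of the argument; it establishes $\mathfrak{h}=2$, hence $\epsilon=(-1)^2=+1$, while simplicity of $\Gamma_{\mu,r}$ follows from the strict monotonicity of $q_{\mu,r}$ and of the height between consecutive cusps.

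Finally, for the aspect ratio I show that $q_{\mu,r}^2$ is strictly monotone in $S=\mathrm{sn}^2(u,m)$ on $[0,1/m]$: writing $b=\sinh^2 r$ and $A=m+b$, a direct differentiation collapses the numerator of $(d/dS)(q_{\mu,r}^2)$ to $-A(b+mS)^2$, so $q_{\mu,r}^2$ is strictly decreasing. Hence the extreme radii occur exactly at the two cusps, giving outer radius $\mathfrak{r}^+=q_{\mu,r}(0)=\sqrt m\,\cosh(r)/(m+\sinh^2(r))$ and inner radius $\mathfrak{r}^-=\sinh(r)\sqrt{m-1}/(m+\sinh^2(r))$. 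Since $\epsilon=+1$, the relevant quantity is the outer aspect ratio $\mathfrak{d}_{\mu,r}=\mathfrak{w}_{\mu,r}/\mathfrak{r}^+$, which upon substitution yields the stated formula and completes the proof.
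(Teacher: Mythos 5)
Your proposal is correct and follows essentially the same route as the paper, whose own proof simply untwists the surface via the angular function $\theta_{\mu,r}$, reads off the pitch from $\rho_{\mu,r}$, computes the wavelength from the quasi-period $2K(1/m)/\sqrt{m}$, locates the two cusps per fundamental domain (at the zeros of $\mathrm{sn}$ and of $\mathrm{dn}$) to get $\epsilon=+1$, and divides $\mathfrak{w}_{\mu,r}$ by the outer radius $\sqrt{m}\cosh(r)/(m+\sinh^2(r))$. In fact you supply more detail than the paper (which just says the reasoning is ``quite similar'' to the magnetic case): your cancellation $\zeta'_{\mu,r}=\rho_{\mu,r}\theta'_{\mu,r}$ at both singular values and the collapse of $(d/dS)(q_{\mu,r}^2)$ to the constant $-m/(m+\sinh^2 r)$ both check out, and your only cosmetic deviation --- including an integer-step correction in $\theta_{\mu,r}$ --- is harmless since, as you note, $q^1_{\mu,r}>0$ makes it vanish, which is why the paper omits it in the electric case.
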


\begin{proof}
The reasoning is quite similar to the one in the previous lemma.
The profile $\widetilde{\Gamma}_{\mu,r}$ does not intersect the screw axis and the function
$$
 2\pi\theta_{\mu,r}(u)=
 %\frac{1}{2\pi}
 \left(\psi_{\mu,r}(u)+
    \arctan\left(\frac{q^2_{\mu,r}(u)}{q^1_{\mu,r}(u)}\right)\right)
     $$
is an analytic determination of the amplitude of the angle formed by the upper
half-plane $\{ (x,y,z) \in \R^3 \mid y=0,\, x>0\}$
%the coordinate half-plane $xz$, $x>0$,
and the upper half-plane bounded by the screw axis and passing through the point
$\widetilde{\gamma}_{\mu,r}(u)$.
Therefore, the plane profile is the real-analytic curve $\Gamma_{\mu,r}$ in the
upper half-plane $\{ (x,y,z) \in \R^3 \mid y=0,\, x>0\}$,
%half plane $Oxz$, $x>0$,
parametrized by
$$
 \gamma_{\mu,r}=\left(q_{\mu,r},0,
  \zeta_{\mu,r}-\rho_{\mu,r}\theta_{\mu,r}\right),
   $$
where
$$
 q_{\mu,r}=\xi_{\mu,r}\sqrt{(q_{\mu,r}^1)^2+(q_{\mu,r}^2)^2}.
   $$
Therefore, the parametrization of the helicoid in terms of the plane profile is
{\small
\begin{equation}\label{ppp1}
%\begin{split}
f_{\mu,r}(u,v)=\Big(q_{\mu,r}(u)\cos(2\pi v),
q_{\mu,r}(u)\sin(2\pi v),\zeta_{\mu,r}(u)-\rho_{\mu,r}\theta_{\mu,r}(u)+\rho_{\mu,r}v  \Big).
%\end{split}
\end{equation}
}
From this it follows that the pitch of the helicoid is
$$
  \mathfrak{p}_{\mu,r}=-\rho_{\mu,r}=2\pi\frac{\cosh(r)\sinh(r)}{m+\sinh^2(r)}.
   $$

\begin{figure}[ht]
\begin{center}
\includegraphics[height=4.5cm,width=8cm]{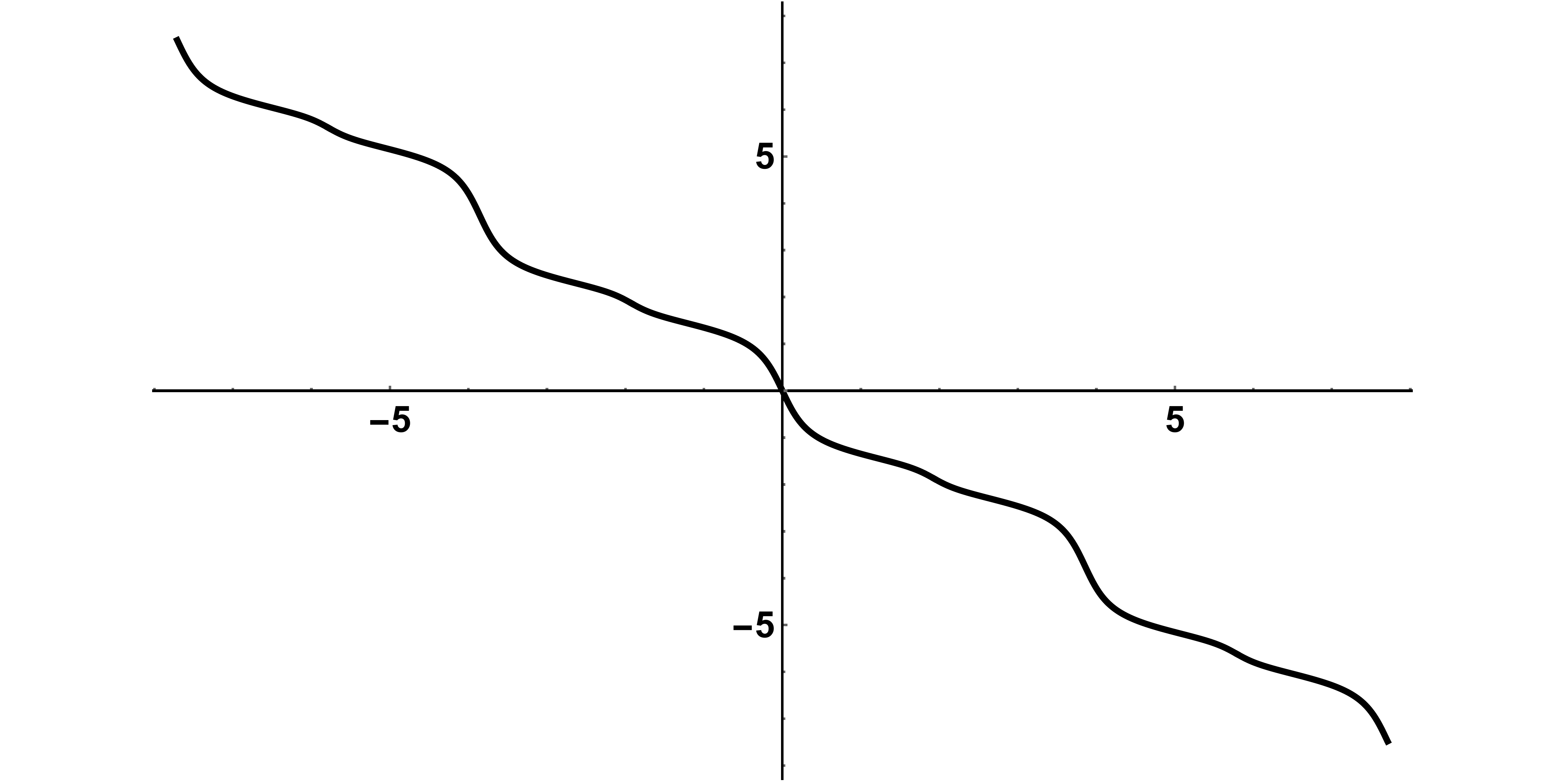}
\caption{The graph of the angular function $\theta_{\mu,r}$,
$\mu\approx 0.7707$ and $r\approx 0.20892$.}\label{FIG5}
\end{center}
\end{figure}

The first component of $\gamma_{\mu,r}$ is periodic with minimal period
$\omega_{\mu}=2K(1/m)/\sqrt{m}$
while the third component is a strictly decreasing quasi-periodic function with
quasi-period $\omega_{\mu}$. Consequently, $\Gamma_{\mu,r}$ is invariant by
the translational group along the $z$-axis and its
wavelength is
\[
 \begin{split}
 \mathfrak{w}_{\mu,r}& =d[\gamma_{\mu,r}(\omega_{\mu}),\gamma_{\mu,r}(0)]\\
& = 2\frac{(m+\cosh^2(r)-1)K( \frac{1}{m})-mE( \frac{1}{m})-\cosh^2(r)\Pi(-\mathrm{csch}^2(r), \frac{1}{m})}
{\sqrt{m(m+\sinh^2(r))}}.
  \end{split}
  \]
The profile possesses countably many cusps of order 1 (ordinary cusps)
%the first kind,
located at the points
\[
  \gamma_{\mu,r}(n\omega_{\mu})=\gamma_{\mu,r}(0)+n\mathfrak{w}_{m,r}\vec{k},\quad n\in \Z
   \]
and
\[
\gamma_{\mu,r}\big(\frac{\omega_{\mu}}{2}
  + n\omega_{mu}\big)=\gamma_{\mu,r}\big(\frac{\omega_{\mu}}{2}\big)+n\mathfrak{w}_{m,r}\vec{k},
   \quad n\in \Z
     \]
All the other points are regular. This shows that pseudospherical helicoids of electric
type are of translational kind and have parity $\epsilon=1$. The profile reaches the maximal
distance from the screw axis at the points
\[
\gamma_{\mu}(n\omega_{\mu})=\gamma_{\mu,r}(0)+n\mathfrak{w}_{\mu,r}\vec{k}.
   \]
The outer radius is then
\[
   \mathfrak{r}^{+}_{\mu,r}=\frac{\sqrt{m}\cosh(r)}{m+\sinh^2(r)}.
     \]
This implies that the aspect
ratio\footnote{Since $\epsilon=1$, the aspect ratio is $\mathfrak{w}/\mathfrak{r}^+$.}
is
\[
\begin{split}
   \mathfrak{d}_{\mu,r}=&\frac{2\sqrt{m+\sinh^2(r)}}{m\cosh(r)}
\Big((m+\cosh^2(r)-1)K( \frac{1}{m})-mE( \frac{1}{m})\\
   \qquad &-\cosh^2(r)\Pi(-\mathrm{csch}^2(r), \frac{1}{m})\Big),
   \end{split}
   \]
which proves the claim.
\end{proof}

\begin{figure}[ht]
\begin{center}
\includegraphics[height=6cm,width=3cm]{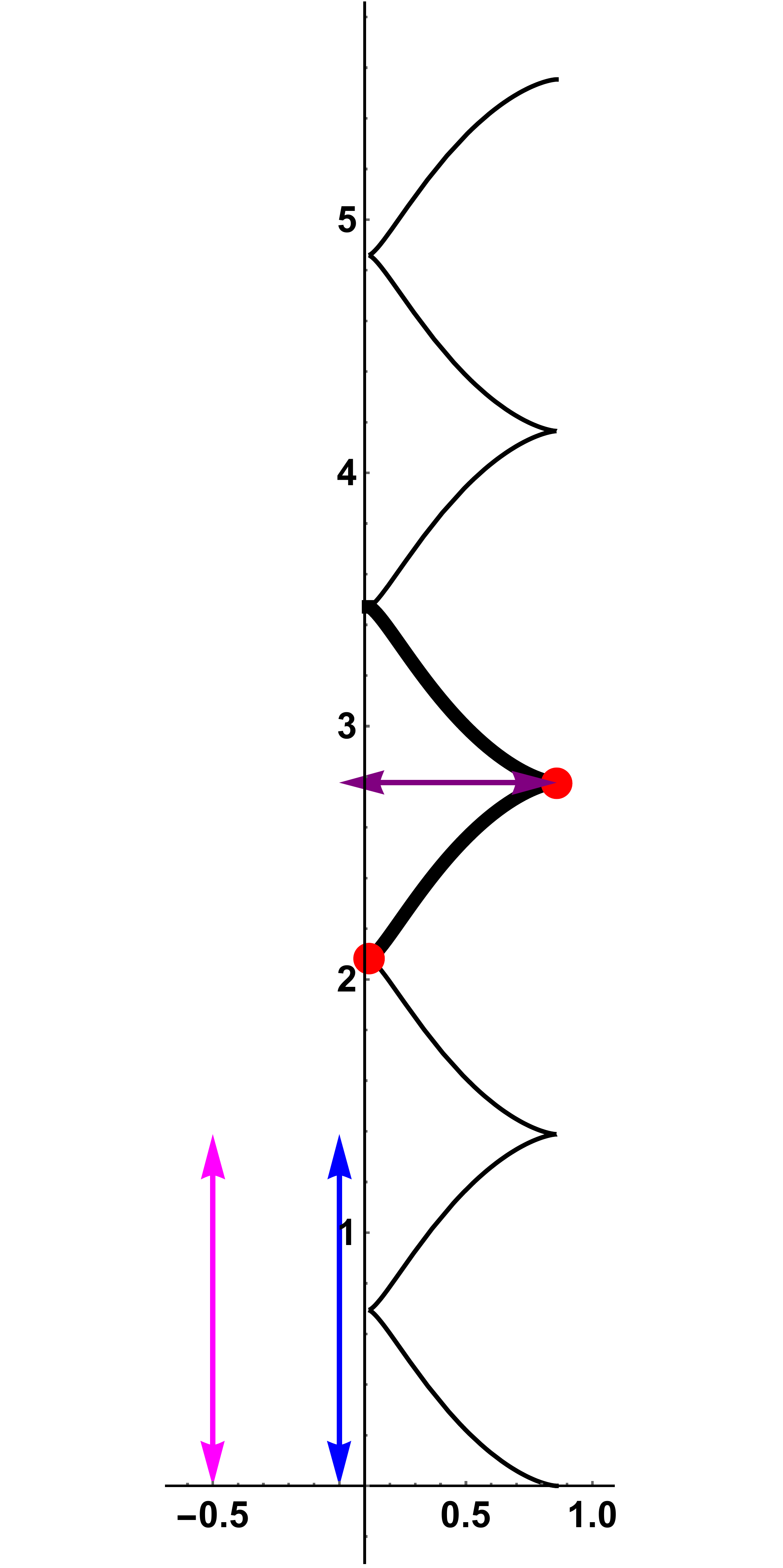}
\includegraphics[height=6cm,width=3cm]{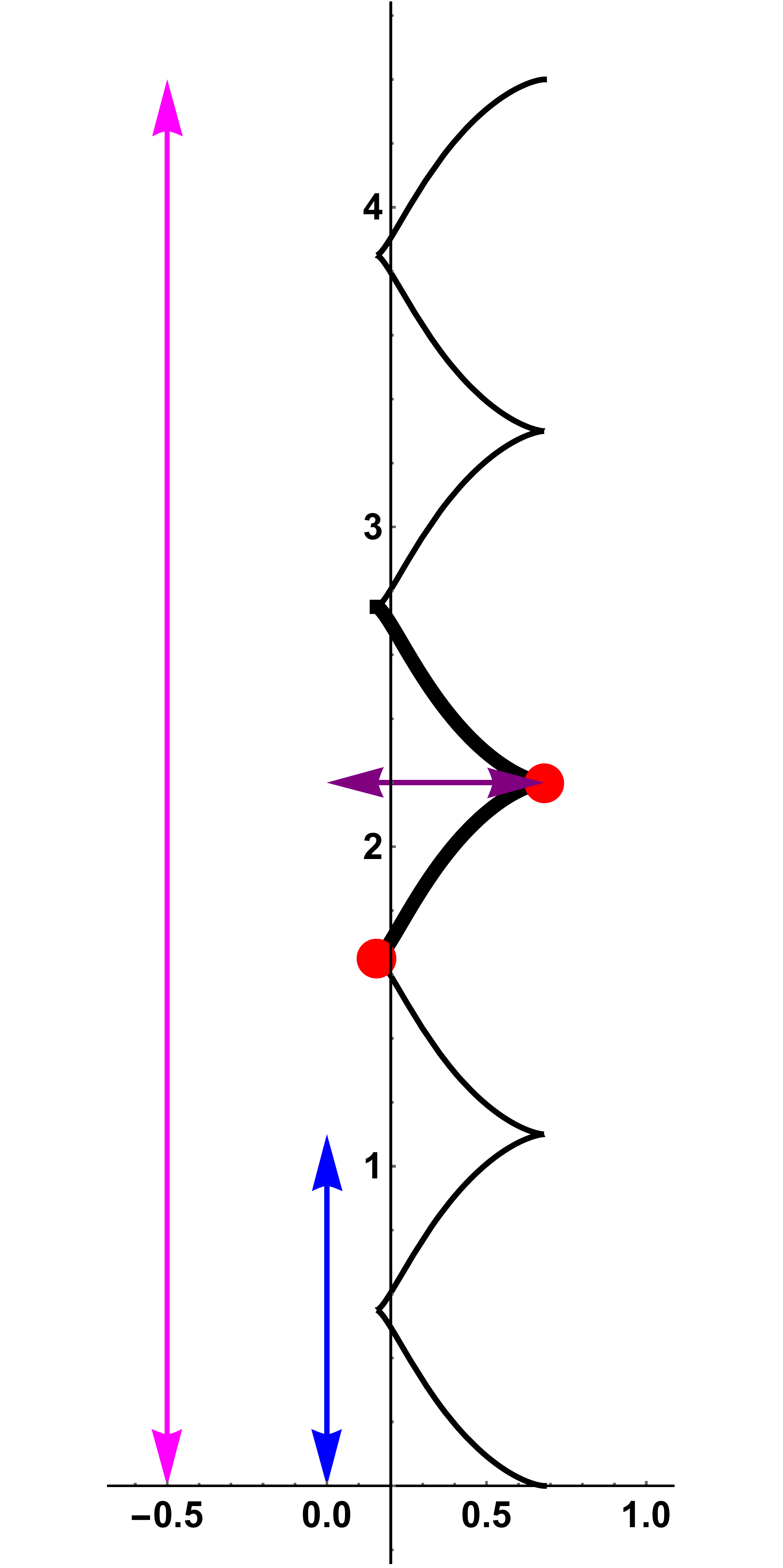}
\caption{The planar profiles of two twisted columns of electric type with wave numbers 1 and 4, respectively.}\label{FIG6}
\end{center}
\end{figure}

%\begin{remark}
%Note that despite the similarities, the plane profiles of the pseudospherical twisted columns are
%not congruent with the profiles of the pseudospherical surfaces of revolution.
%\end{remark}

\subsection{Existence of pseudospherical twisted columns with prescribed invariants}\label{ss:ps-twisted}

We start by giving the following.

\begin{defn}
Let
\[
  \mathcal{M}_-= \R^+ \times (0,1), \quad   \mathcal{M}_+= \R^+ \times (1,+\infty).
      \]
%For every $n,d\in \R^+$,
For a positive integer $n\in \mathbb N$ and a positive real number $d \in \mathbb R^+$,
let
%we define the {\it modular curves} by
\[
 \begin{split}
  \mathcal{C}^{\pm}_n&=\{(r,m)\in \mathcal{M}_{\pm}\mid \mathfrak{p}_{\mu,r} = n\mathfrak{w}_{\mu,r}\},\\
%\mathcal{D}^{\pm}_d&=\{(r,m)\in \mathcal{M}_{\pm}\mid \mathfrak{d}_{\mu,r}-d\mathfrak{r}^{\pm}_{\mu,r}=0\}
%\mathcal{D}^{\pm}_d&=\{(r,m)\in \mathcal{M}_{\pm}\mid \mathfrak{p}_{\mu,r}-d\mathfrak{r}^{\pm}_{\mu,r}=0\},
\mathcal{D}^{\pm}_d&=\{(r,m)\in \mathcal{M}_{\pm}\mid \mathfrak{d}_{\mu,r} = d\},
\end{split}
    \]
where,
depending on whether $\mathcal{C}^{-}_n$, $\mathcal{D}^{-}_d$
or $\mathcal{C}^{+}_n$, $\mathcal{D}^{+}_d$ are considered,
$\mathfrak{p}_{\mu,r}$, $\mathfrak{w}_{\mu,r}$ and $\mathfrak{d}_{\mu,r}$ are as in Lemma 8
or Lemma 9 above. Accordingly, we call
$\mathcal{C}^{-}_n$, $\mathcal{D}^{-}_d$ and $\mathcal{C}^{+}_n$, $\mathcal{D}^{+}_d$
the {\em modular curves of magnetic type} and of {\em electric type}, respectively.
\end{defn}

\begin{remark}
From Lemmas 8 and 9, it follows that a pseudospherical helicoid with parity $\epsilon=-1$
(resp., $\epsilon =1$) is
a twisted column if and only if $(r,m)\in \mathcal{C}^{-}_n$ (resp., $(r,m)\in \mathcal{C}^{+}_n$),
where $n\in \mathbb{N}$
is its wave number.\footnote{Recall that in Lemmas 8 and 9, $m$ stands for $\mu^{-1}$.} Consequently, a
pseudospherical helicoid with parameters $(\mu,r)$ is a twisted column with parity $\epsilon=-1$
(resp., $\epsilon =1$), wave number $\mathfrak{n}$, and aspect ratio $\mathfrak{d}$ if and only
if $(r,m)\in \mathcal{C}^{-}_{\mathfrak{n}}\cap \mathcal{D}^{-}_{\mathfrak{d}}$ (resp.,
$(r,m)\in \mathcal{C}^{+}_{\mathfrak{n}}\cap \mathcal{D}^{+}_{\mathfrak{d}}$).
\end{remark}

\begin{figure}[ht]
\begin{center}
\includegraphics[height=6cm,width=6cm]{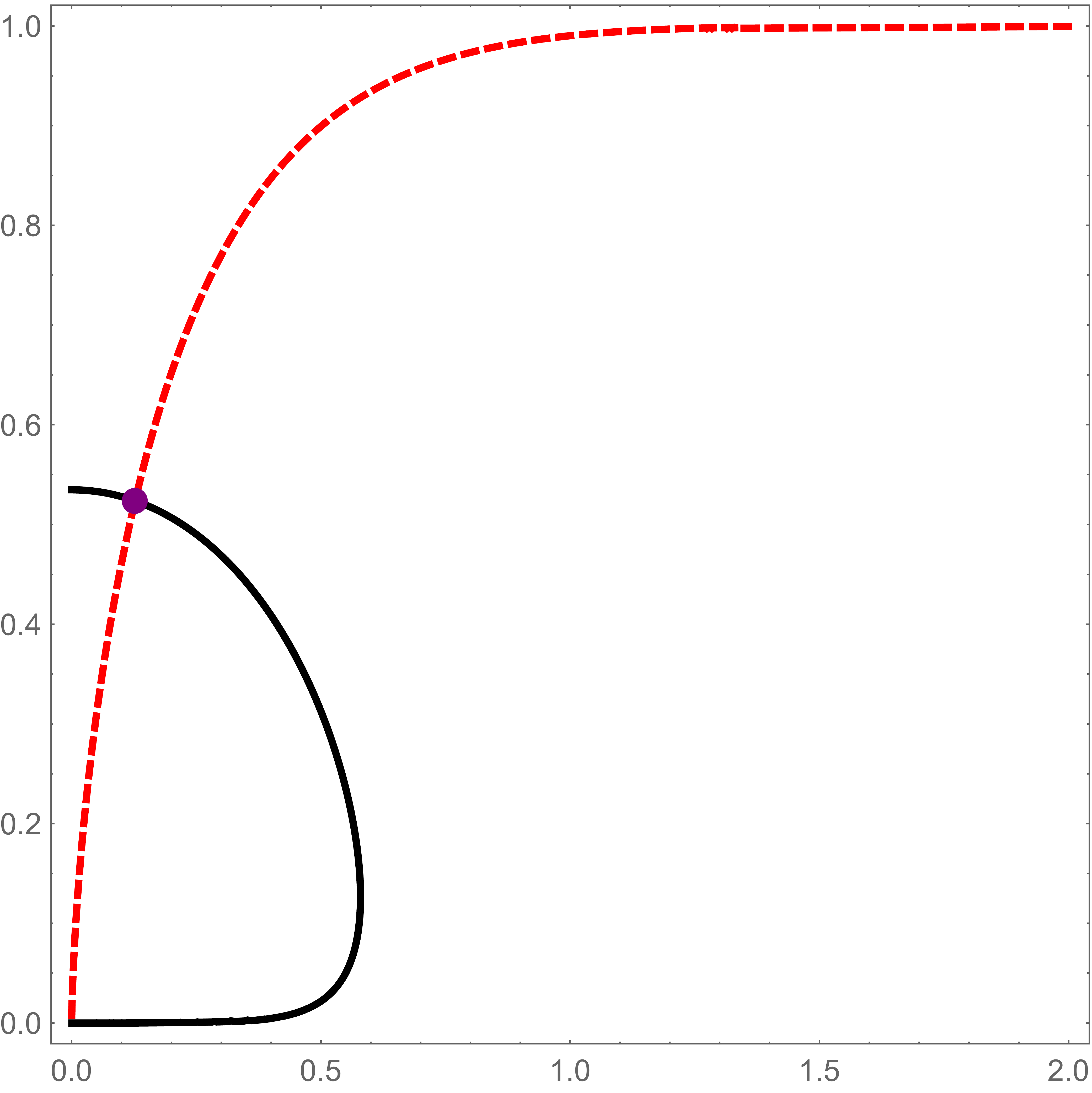}
\includegraphics[height=6cm,width=6cm]{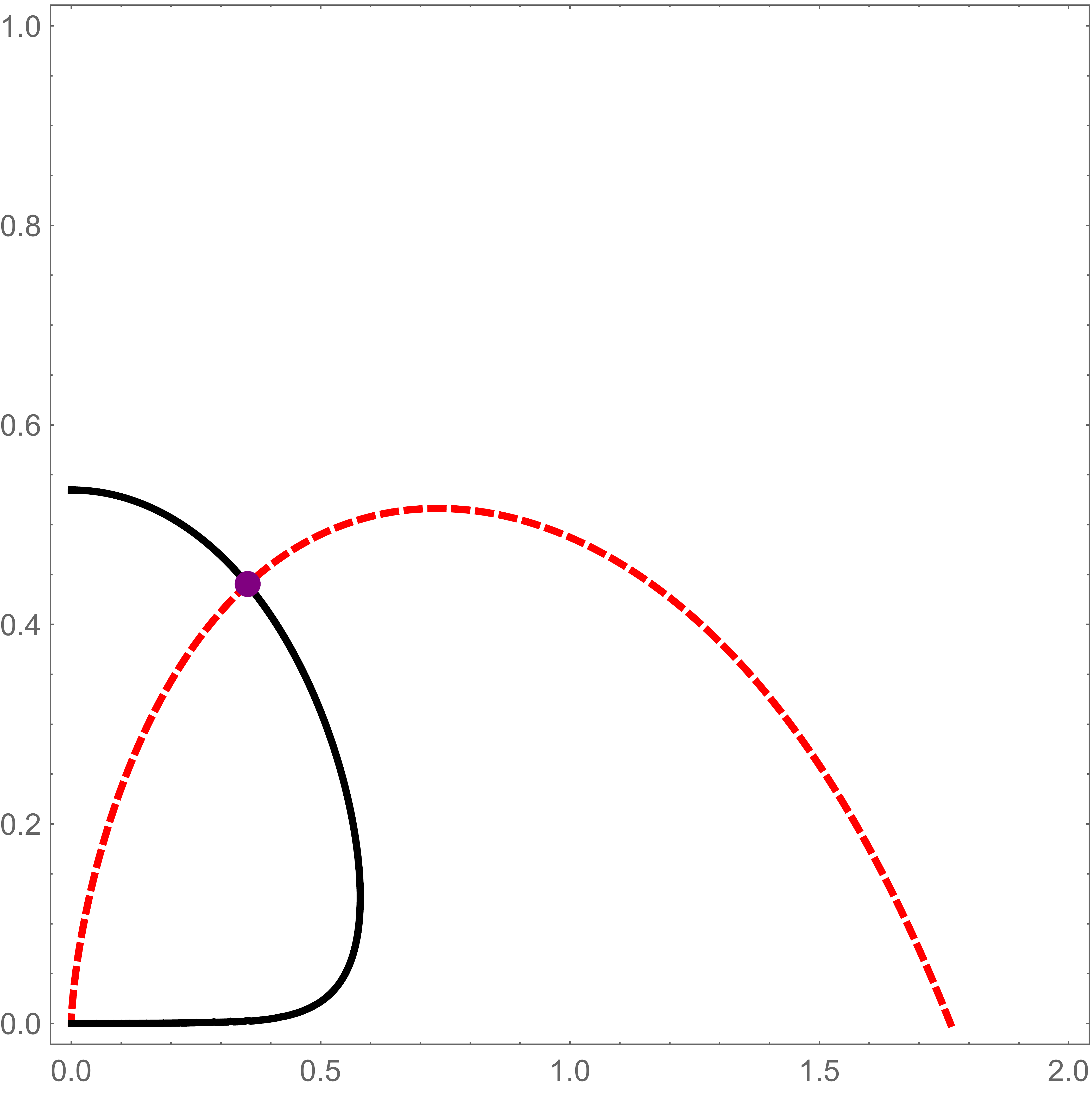}
\caption{The modular curves $\mathcal{C}^-_n$ (dashed lines) and $\mathcal{D}^-_d$ (solid lines) for $n=1$ (left),
$n=3$ (right) and $d$ equal to the golden ratio. The parameters of the corresponding
twisted columns are $\mu \approx 1.90951$, $\mu\approx 2.27181$ and $r=0.127237$, $r=0.353348$,
respectively.}\label{FIG7}
\end{center}
\end{figure}

\begin{lemma}
For given
%every
$n\in \mathbb{N}$ and $d>0$,
there exists a pseudospherical twisted column of magnetic type with wave number
$n$ and aspect ratio $d$.
\end{lemma}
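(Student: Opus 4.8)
By the Remark preceding the statement, a magnetic-type pseudospherical helicoid with parameters $(\mu,r)$, $m=\mu^{-1}$, is a twisted column with wave number $n$ and aspect ratio $d$ precisely when $(r,m)\in\mathcal{C}^-_n\cap\mathcal{D}^-_d$. Hence the plan is to prove that these two modular curves meet. Introduce the shorthand $A(r,m)=\cosh^2(r)\,\Pi(-\sinh^2(r),m)-E(m)$. From Lemma 8 the two relevant invariants are the real-analytic functions on $\mathcal{M}_-=\R^+\times(0,1)$
\[
\mathfrak{n}(r,m)=\frac{\mathfrak{p}_{\mu,r}}{\mathfrak{w}_{\mu,r}}=\frac{\pi\cosh(r)\sinh(r)}{A(r,m)\sqrt{m+\sinh^2(r)}},\qquad
\mathfrak{d}(r,m)=\frac{2\sqrt{m+\sinh^2(r)}\,A(r,m)}{\sqrt{m(1-m)}},
\]
and $\mathcal{C}^-_n=\{\mathfrak{n}=n\}$, $\mathcal{D}^-_d=\{\mathfrak{d}=d\}$ are their level sets. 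The idea is to realize $\mathcal{C}^-_n$ as a connected arc and to show that $\mathfrak{d}$ restricted to it is a continuous surjection onto $(0,\infty)$; the intermediate value theorem then yields a point of $\mathcal{C}^-_n$ where $\mathfrak{d}=d$.

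First I would record the boundary asymptotics of $\mathfrak{n}$ on the four sides of $\mathcal{M}_-$: $\mathfrak{n}\to 0$ as $r\to 0^+$ and as $m\to 1^-$, $\mathfrak{n}\to 2$ as $r\to\infty$, and $\mathfrak{n}\to 2\cosh(r)/(\cosh(r)-1)$ as $m\to 0^+$. Next I would prove the monotonicity $\partial\mathfrak{n}/\partial m<0$ for fixed $r$; combined with the two extreme limits in $m$ (namely $2\cosh(r)/(\cosh(r)-1)>2$ at $m=0$ and $0$ at $m=1$), the implicit function theorem then presents $\mathcal{C}^-_n$ as the graph of a real-analytic function $m=m_n(r)$ over an interval $(0,r_n)$, where $r_n=\mathrm{arccosh}\!\big(n/(n-2)\big)$ for $n\ge 3$ and $r_n=\infty$ for $n\in\{1,2\}$.

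It remains to compute the limits of $\mathfrak{d}$ along this graph at the two ends. Near the corner $(r,m)=(0,0)$ a short expansion gives $A(r,m)\sim\frac{\pi}{4}\big(m+r^2\big)$, so that $\mathfrak{n}\sim 4r/(m+r^2)^{3/2}$; imposing $\mathfrak{n}=n$ forces $m_n(r)\sim(4r/n)^{2/3}$ and hence $\mathfrak{d}\sim\frac{\pi}{2}(4r/n)^{2/3}\to 0$ as $r\to 0^+$. At the opposite end one has $m_n(r)\to 0$ (with $r\to r_n^-$ if $n\ge 3$, or $r\to\infty$ if $n\in\{1,2\}$); there the numerator of $\mathfrak{d}$ stays bounded away from $0$ while the factor $\sqrt{m(1-m)}$ in the denominator tends to $0$, forcing $\mathfrak{d}\to+\infty$. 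By continuity, $\mathfrak{d}\big(r,m_n(r)\big)$ takes every value in $(0,\infty)$, in particular the value $d$, at some $r_0\in(0,r_n)$. The point $(r_0,m_n(r_0))\in\mathcal{C}^-_n\cap\mathcal{D}^-_d$ then gives the desired twisted column.

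The main obstacle will be the asymptotic control of the third-kind integral $\Pi(-\sinh^2(r),m)$ in the degenerate regimes $m\to 0^+$, $m\to 1^-$, and near the corner $(0,0)$, since these limits drive every endpoint computation above; the corner expansion of $A$ is the most delicate, because there $r$ and $m$ degenerate simultaneously. The second technical point is the monotonicity $\partial\mathfrak{n}/\partial m<0$, which requires differentiating the elliptic integrals in the parameter and estimating the result. Finally, to obtain the transversal single-point intersection announced in the introduction, I would compute $\nabla\mathfrak{n}$ and $\nabla\mathfrak{d}$ at the intersection and verify that they are linearly independent, which also prepares the ground for the uniqueness argument of Lemma 12.
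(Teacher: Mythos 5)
Your reduction of $\mathcal{C}^-_n$ to a graph $m=m_n(r)$ is sound as far as it goes: the monotonicity $\partial\mathfrak{n}/\partial m<0$ is in fact easier than you fear, since the integrand of $\Pi(-\sinh^2(r),\cdot)$ is increasing and that of $E(\cdot)$ decreasing in the parameter, so both $A(r,m)$ and $\sqrt{m+\sinh^2(r)}$ increase in $m$; and your corner expansion $A\sim\frac{\pi}{4}(m+r^2)$, hence $\mathfrak{d}\to 0$ at the near end, is correct. The genuine gap is at the far end, precisely in the cases $n\in\{1,2\}$ where you set $r_n=\infty$. You assert that $m_n(r)\to 0$ as $r\to\infty$, with the numerator of $\mathfrak{d}$ bounded away from zero; but for each fixed $m\in(0,1)$ one has $\Pi(-\sinh^2(r),m)\sim\pi/(2\sinh r)$, hence $\mathfrak{n}(r,m)\to 2$, while along the bottom edge $\mathfrak{n}(r,0^+)=2\cosh(r)/(\cosh(r)-1)>2$. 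So for $n=1$ the level curve cannot approach $m=0$: it must drift to $m\to 1^-$ (where $\mathfrak{n}\to 0$), and there $\sqrt{m(1-m)}\to 0$ while the numerator involves $\Pi(-\sinh^2(r),m)\to\infty$, so your ``numerator bounded below, denominator to zero'' argument does not apply; one would need a genuinely different (and coupled, since $r\to\infty$ simultaneously) estimate. For $n=2$ the situation is the borderline $\mathfrak{n}\to 2$, and even deciding whether $m_2(r)$ drifts to $0$ or $1$ requires second-order asymptotics of $\Pi(-\sinh^2(r),m)$ uniform in $m$, which you do not supply. Thus surjectivity of $\mathfrak{d}$ along $\mathcal{C}^-_n$ is unproven exactly for the lowest wave numbers, including $n=1$, which is needed for the embedded examples; for $n\ge 3$ your endpoint analysis is fine.

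The paper sidesteps all of this by elimination rather than level-curve analysis: equating the two equations of the system defining $\mathcal{C}^-_n\cap\mathcal{D}^-_d$ cancels the transcendental quantity $\cosh^2(r)\Pi(-\sinh^2(r),m)-E(m)$ and leaves the elementary relation $\sinh(2r)=nd\sqrt{m(1-m)}/\pi$, i.e. the closed form $r=\varrho_{n,d}(m)$. Substituting back reduces the problem to a single real-analytic function $h_{n,d}$ on $(0,1)$, strictly increasing, with $h_{n,d}(0^+)=-2\pi d/\sqrt{n^2d^2+4\pi^2}<0$ and $h_{n,d}(1^-)=+\infty$; this yields existence \emph{and} uniqueness of the intersection point in one stroke, uniformly in $n$, with no two-variable asymptotics at all. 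If you want to salvage your scheme, the cheapest patch is to perform this same elimination: it replaces the problematic far-end analysis entirely, and it also delivers the single-point intersection (the transversality/uniqueness you defer to a final computation), which is what the paper actually uses toward Theorem 2.
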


\begin{proof}
It suffices to prove that $\mathcal{C}^-_n\cap \mathcal{D}^-_d$ consists of a single point.
From the previous Lemmas it follows that the points of $\mathcal{C}^-_n\cap \mathcal{D}^-_d$
are the solutions of the system
\begin{equation}\label{system1}\begin{split}
& \cosh^2(r)\Pi(-\sinh^2(r),m)=E(m)+\frac{\pi \cosh(r)\sinh(r)}{n\sqrt{m +\sinh^2(r)}},\\
& \cosh^2(r)\Pi(-\sinh^2(r),m)=E(m)+d\frac{\sqrt{m(1-m)}}{2\sqrt{m+\sinh^2(r)}}.
\end{split}
\end{equation}
This implies that $r=\varrho_{n,d}(m)$, where $\varrho_{n,d}:(0,1)\to \R$ is defined by
\begin{equation}\label{r1}
 \varrho_{n,d}(y)=\frac{1}{2}\mathrm{arcsinh}\left(\frac{d\cdot n\sqrt{y(1-y)}}{\pi} \right).
   \end{equation}
Substituting \eqref{r1} into the second equation of \eqref{system1}, we find that $m\in (0,1)$ is
a zero of the function
\[
\begin{split}
 h_{n,d}(y)&=-2E(y)+2\cosh^2(\varrho_{n,d}(y))\Pi(-\sinh^2(\varrho_{n,d}(y)),y)\\
 &\qquad - d\sqrt{\frac{y(1-y)}{y+\sinh^2(\varrho_{n,d}(y))}},
  \end{split}
    \]
which is defined on $(0,1)$.
The latter is a strictly increasing real-analytic function, such that
\[
 \lim\limits_{y\to 1^-}h_{n,d}(y)=+\infty, \quad
  \lim\limits_{y\to 0^+}h_{n,d}(y)=
 % 2-\frac{\pi}{2}
  -\frac{2d\pi}{\sqrt{n^2d^2+4\pi^2}}<0.
   \]
Therefore, for given $n\in \mathbb{N}$ and $d>0$, there exists a unique $m_{n,d}\in (0,1)$,
 such that $h_{n,d}(m_{n,d})=0$. We have thus proved that
$$
 \mathcal{C}^-_n\cap \mathcal{D}^-_d=(\varrho_{n,d}(m_{n,d}),m_{n,d}),
 $$
which yields the required result.
\end{proof}

\begin{figure}
\begin{center}
\includegraphics[height=6cm,width=6cm]{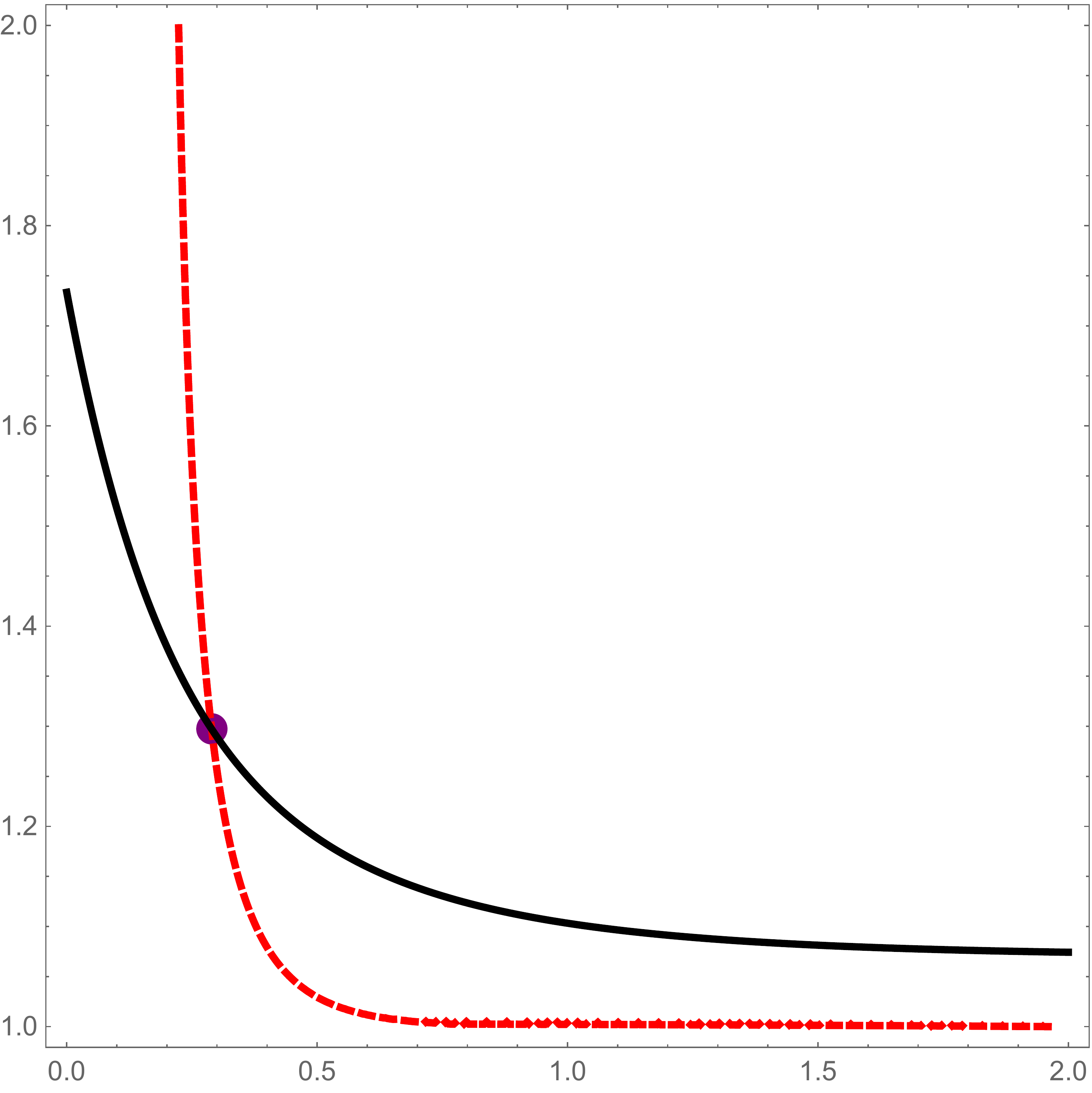}
\includegraphics[height=6cm,width=6cm]{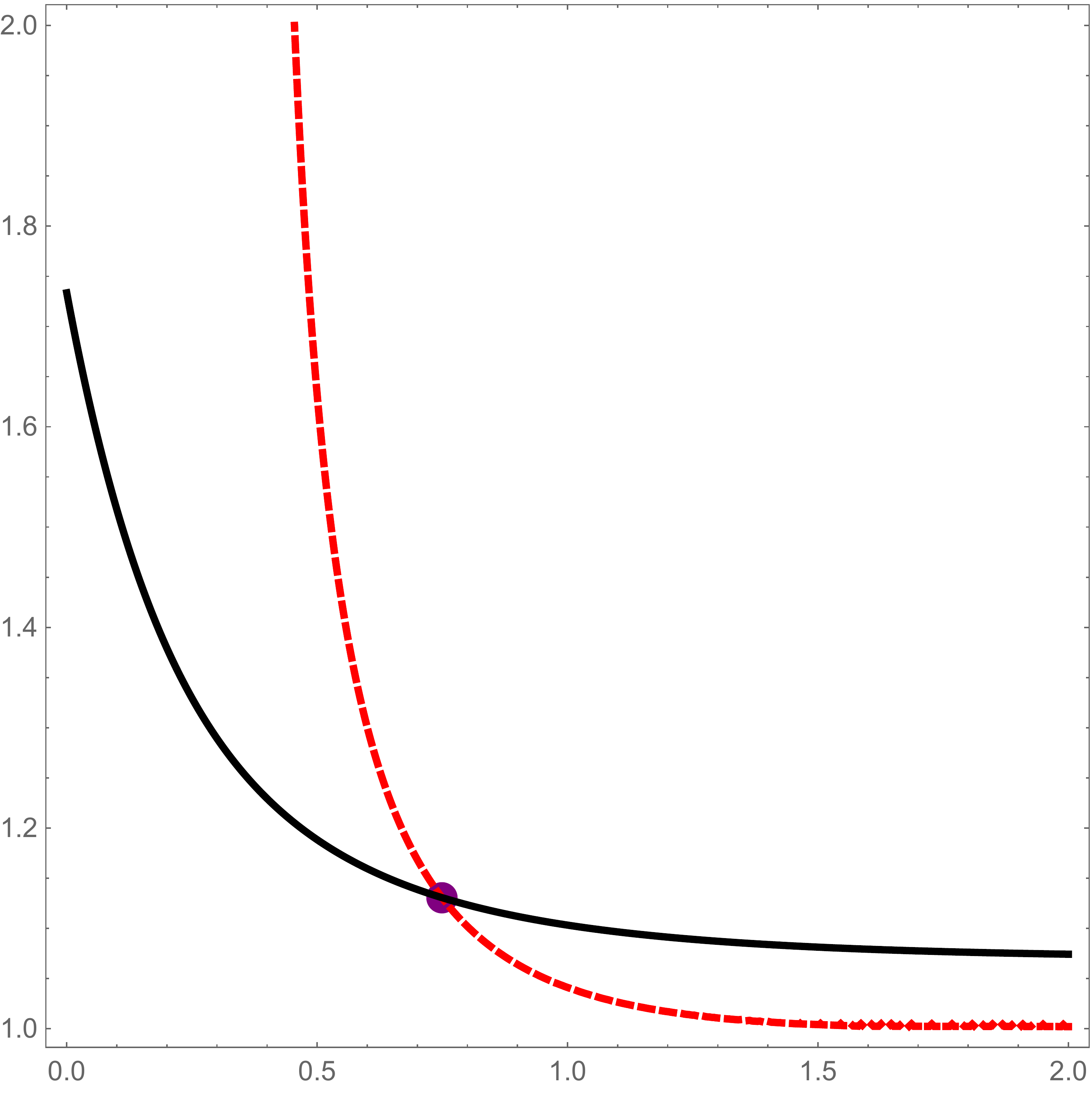}
\caption{The modular curves $\mathcal{C}^+_n$ (dashed lines) and $\mathcal{D}^+_d$ (solid lines)
for $n=1$ (left), $n=3$ (right) and $d$ equal to the golden ratio. The parameters of the corresponding
twisted columns are $\mu \approx 0.7708$, $\mu\approx 0,8844$ and $r=0.289$, $r=0.749$, respectively.}\label{FIG8}
\end{center}
\end{figure}

\begin{lemma}
For given
%every
$n\in \mathbb{N}$ and $d>0$, there exists a pseudospherical twisted column of electric
type with wave number $n$ and aspect ratio $d$.
\end{lemma}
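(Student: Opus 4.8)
The plan is to follow the strategy of Lemma 10 almost verbatim, now feeding in the electric-type data of Lemma 9 in place of the magnetic-type data. By the Remark preceding this lemma, a pseudospherical helicoid of electric type with parameters $(\mu,r)$ is a twisted column of wave number $n$ and aspect ratio $d$ exactly when $(r,m)\in\mathcal{C}^+_n\cap\mathcal{D}^+_d$, with $m=\mu^{-1}>1$. It therefore suffices to prove that $\mathcal{C}^+_n\cap\mathcal{D}^+_d$ reduces to a single point of $\mathcal{M}_+$.

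To set up the defining system, I would write
\[
 P(r,m)=(m+\cosh^2 r-1)K(\tfrac1m)-mE(\tfrac1m)-\cosh^2 r\,\Pi(-\mathrm{csch}^2 r,\tfrac1m),
\]
so that Lemma 9 reads $\mathfrak{w}_{\mu,r}=2P/\sqrt{m(m+\sinh^2 r)}$ and $\mathfrak{d}_{\mu,r}=2\sqrt{m+\sinh^2 r}\,P/(m\cosh r)$. The two conditions $\mathfrak{p}_{\mu,r}=n\mathfrak{w}_{\mu,r}$ and $\mathfrak{d}_{\mu,r}=d$ then form a pair of equations, each of which can be solved for $P$; equating the two resulting expressions eliminates the common elliptic factor and, after the cancellation of a $\cosh r$, collapses to the purely algebraic relation $\sinh r = dn\sqrt{m}/(2\pi)$. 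Thus on the intersection one necessarily has $r=\varrho_{n,d}(m)$, where now $\varrho_{n,d}(y)=\mathrm{arcsinh}\big(dn\sqrt{y}/(2\pi)\big)$ is the analogue defined on $(1,+\infty)$ (note that in the electric case the extra factor $m\cosh r$ in $\mathfrak{d}_{\mu,r}$ kills one power of $\cosh r$, so $\sinh r$ rather than $\sinh 2r$ survives).

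Substituting $r=\varrho_{n,d}(y)$ back into the aspect-ratio equation $\mathfrak{d}_{\mu,r}-d=0$ reduces the whole problem to locating the zeros on $(1,+\infty)$ of a single real-analytic function $h_{n,d}(y)$, exactly analogous to the function $h_{n,d}$ of Lemma 10. I would then show that $h_{n,d}$ is strictly monotone and changes sign on $(1,+\infty)$: concretely, one computes the two one-sided boundary limits, as $y\to 1^+$ (the Dini boundary $\mu\to1^-$) and as $y\to+\infty$ (the degenerate boundary $\mu\to0^+$), verifies that they have opposite signs, and checks the sign of $h_{n,d}'$. The intermediate value theorem together with monotonicity then yields a unique zero $m_{n,d}\in(1,+\infty)$, whence $\mathcal{C}^+_n\cap\mathcal{D}^+_d=\{(\varrho_{n,d}(m_{n,d}),m_{n,d})\}$, which is the desired conclusion.

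The main obstacle, as in the magnetic case, is precisely this monotonicity-and-limits analysis of $h_{n,d}$. Here the characteristic $-\mathrm{csch}^2 r$ of the elliptic integral of the third kind is itself an $r$-dependent, and hence through $\varrho_{n,d}$ an $m$-dependent, quantity, while the parameter $1/m$ now lies in $(0,1)$; controlling the derivatives and the endpoint asymptotics of $K$, $E$ and $\Pi$ under these couplings is the delicate point and is where the genuine analytic work lies. I expect that, once the boundary behavior is pinned down, the sign change will again be forced by an explicit finite negative limit at one end against a divergent (or oppositely signed) limit at the other, so that strict monotonicity of $h_{n,d}$ closes the argument.
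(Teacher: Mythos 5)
Your proposal follows the paper's proof essentially verbatim: eliminating the common elliptic bracket $P$ between the two conditions $\mathfrak{p}_{\mu,r}=n\mathfrak{w}_{\mu,r}$ and $\mathfrak{d}_{\mu,r}=d$ yields exactly the paper's relation $\sinh r = dn\sqrt{m}/(2\pi)$, i.e.\ $r=\varrho_{n,d}(m)$ with $\varrho_{n,d}(y)=\mathrm{arcsinh}\bigl(n\sqrt{y}\,d/(2\pi)\bigr)$, and your back-substitution reducing everything to the unique zero of a strictly monotone real-analytic function $h_{n,d}$ with boundary limits of opposite sign is precisely the paper's argument (the paper substitutes into the pitch equation rather than the aspect-ratio equation, an immaterial difference). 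The monotonicity-and-limits analysis you defer is likewise only asserted, not proved, in the paper --- which states the limits at $y\to 0^{+}$ and $y\to+\infty$ on the extended domain $(0,+\infty)$ rather than at $1^{+}$ --- so your attempt matches both the paper's route and its level of detail.
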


\begin{proof}
%Again, the reasoning is modeled on the one of the previous Lemma.
We argue as in Lemma 10.
The points of $\mathcal{C}^+_n\cap \mathcal{D}^+_d$ are the solutions of the system
\begin{equation}\label{system11}
\begin{split}
&\frac{\pi\sqrt{m}\sinh(2r)}{\sqrt{m+\sinh^2(r)}}+2n\Big[mE\Big(\frac{1}{m}\Big)-(\cosh^2(r)+m-1)K\Big(\frac{1}{m}\Big)\\
&\qquad +\cosh^2(r) \Pi(-\mathrm{csch}^2(r),\frac{1}{m})\Big]=0,\\
&\frac{dm\cosh(r)}{\sqrt{m+\sinh^2(r)}}+2\Big[mE\Big(\frac{1}{m}\Big)-(\cosh^2(r)+m-1)K\Big(\frac{1}{m}\Big)\\
&\qquad +\cosh^2(r)\Pi(-\mathrm{csch}^2(r),\frac{1}{m})\Big]=0.
\end{split}
\end{equation}
The first equation implies that $\Pi(-\mathrm{csch}^2(r),1/m)$ is equal to
\[
\begin{split}
 \qquad &\frac{n\,\mathrm{sech}^2(r)\big(\cosh(2r)+2m-1\big)\big(-2mE(\frac{1}{m})
 +(\cosh(2r)+2m-1)K(\frac{1}{m})\big)} {4n(m+\sinh^2(r))}\\
 &\qquad - \frac{\pi\sqrt{m(m+\sinh^2(r))}\tanh(r)}{n(m+\sinh^2(r))}.
 \end{split}
  \]
Substituting this expression in the second equation of \eqref{system11}, we deduce that
$r=\varrho_{n,d}(m)$, where $\varrho_{n,d}:\R^+\to \R$ is defined by
\begin{equation}\label{r11}
\varrho_{n,d}(y)=\mathrm{arcsinh}\left(\frac{n\sqrt{y}d}{2\pi} \right).
\end{equation}
Substituting \eqref{r11} into the first equation of \eqref{system11}, we find that $m\in (1,+\infty)$
is a zero of the function
\[
\begin{split}
h_{n,d}(y)&=\frac{2d\pi^2\sqrt{d^2n^2y^2+4\pi^2y}}{\sqrt{d^2n^2+4\pi^2}}
+
 %n
\Big(4y\pi^2E\Big(\frac{1}{y}\Big)
-y(d^2n^2+4\pi^2)K\Big(\frac{1}{y}\Big)\\
&\qquad +(d^2n^2y+4\pi^2)\Pi(-\frac{4\pi^2}{d^2n^2y},\frac{1}{y})\Big),
\end{split}
\]
which is defined on $(0,+\infty)$.
This is a strictly increasing real-analytic function such that
$$
 \lim\limits_{y\to 0^+}h_{n,d}(y)=-\infty,\quad
   \lim\limits_{y\to +\infty}h_{n,d}(y)=+\infty.
     $$
Therefore, for given $n\in \mathbb{N}$ and $d>0$, there exists a unique $m_{n,d}\in (1,+\infty)$
such that $h_{n,d}(m_{n,d})=0$. We have thus proved that
\[
 \mathcal{C}^+_n\cap \mathcal{D}^+_d=(\varrho_{n,d}(m_{n,d}),m_{n,d}),
  \]
which yields the required result.
\end{proof}

\begin{figure}
\begin{center}
\includegraphics[height=6cm,width=6cm]{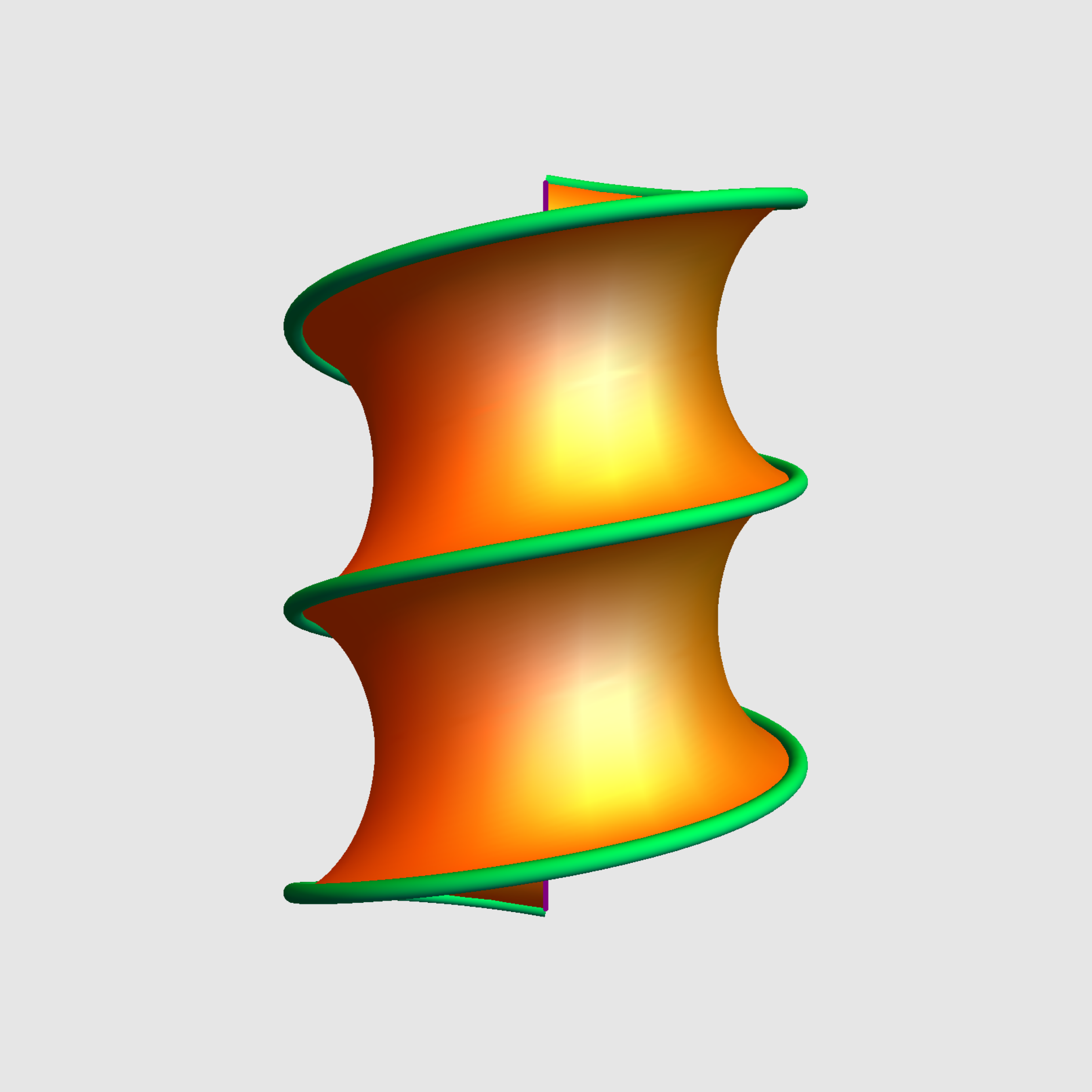}
\includegraphics[height=6cm,width=6cm]{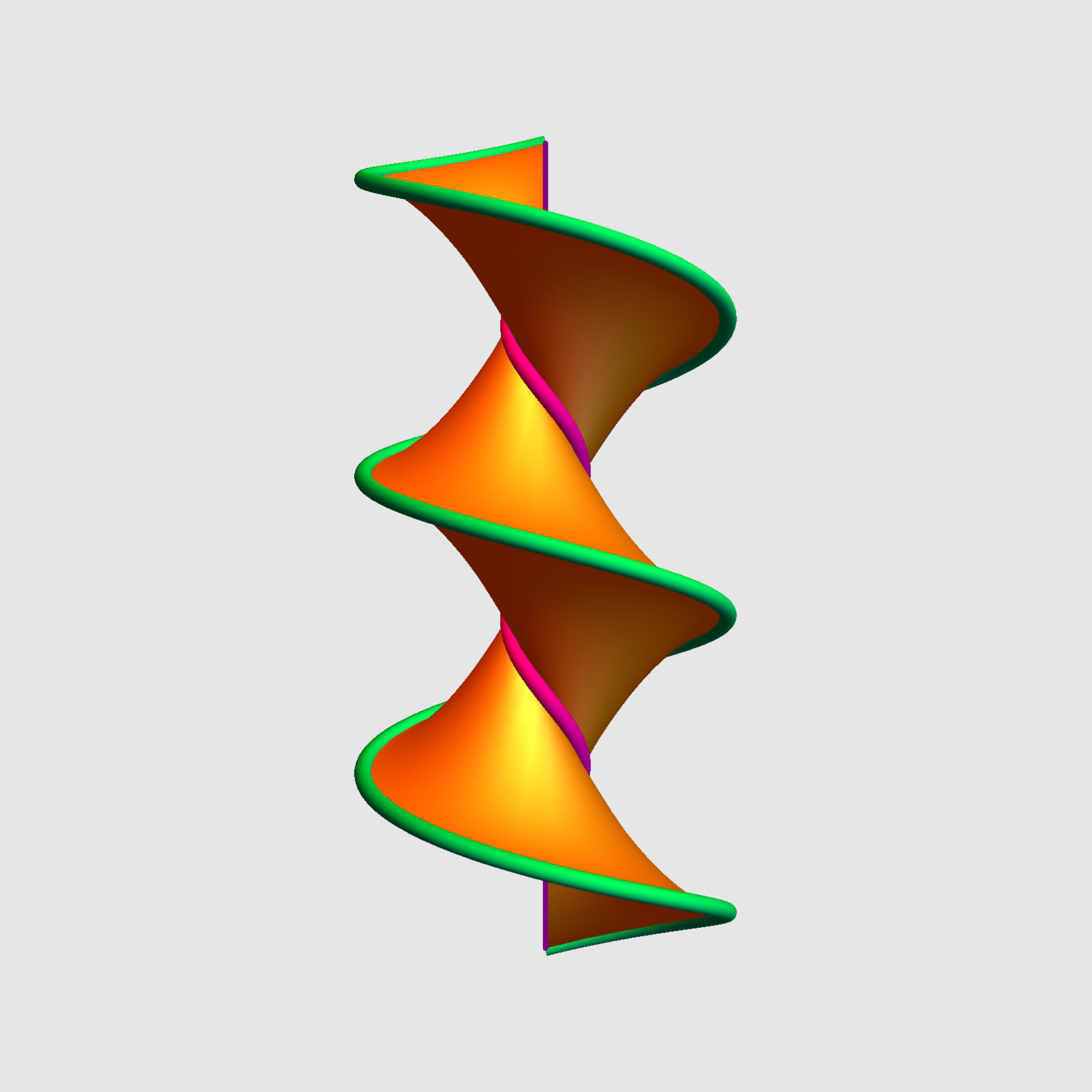}
\caption{Magnetic (left) and electric (right) pseudospherical twisted columns
with $\mathfrak{n}=1$ and $\mathfrak{d}=\phi$, $\phi = \text{golden ratio}$.}\label{FIG9}
\end{center}
\end{figure}

\begin{remark}[Number of singular helices]
In the magnetic-type case, the wave number tells how many singular helices are contained in the
pseudospherical helicoid. In the electric-type case, instead, the number of singular helices
in the pseudospherical helicoid is two times the wave number (see Example \ref{ex1}).
This result should be compared with the local analysis of Brander \cite{Brander} about
the pseudospherical fronts containing circular helices as singular points.
%It follows in
%particular that if the singular locus of a pseudospherical front contains a circular helix ??
\end{remark}

\subsection{End of the proof}\label{ss:conclusion}

To conclude the proof of Theorem \ref{thm:main} we are left with the following.

\begin{lemma}
Let $S$ and $S'$ be two right-handed (or left-handed) pseudospherical twisted columns with invariants $(\epsilon,\mathfrak{n},\mathfrak{d})$ and $(\epsilon ',\mathfrak{n}',\mathfrak{d}')$. If
$(\epsilon,\mathfrak{n},\mathfrak{d})\neq (\epsilon',\mathfrak{n}',\mathfrak{d}')$ then $S$ and $S'$
 cannot be congruent to each other.
 \end{lemma}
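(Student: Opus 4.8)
The plan is to show that the triple $(\epsilon,\mathfrak{n},\mathfrak{d})$ consists of invariants of the underlying twisted column under the full group $\mathbf{E}(3)$ of rigid motions, so that two congruent columns necessarily share all three; the lemma then follows by contraposition. So I would suppose, for contradiction, that there is an $A\in\mathbf{E}(3)$ with $A(S)=S'$, and let $\ell,\ell'$ denote the screw axes of $S$ and $S'$.

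First I would recover the screw axis intrinsically from the surface. By Lemmas 8 and 9 every pseudospherical twisted column carries a nonempty singular set $\Sigma$, obtained by sweeping the ordinary cusps of the profile under the helicoidal group; its connected components are circular helices lying on cylinders coaxial with $\ell$, whose common axis is precisely $\ell$. Since $A$ is an isometry and being an immersed (regular) point is a congruence-invariant property, $A$ carries $\Sigma$ bijectively onto the singular set $\Sigma'$ of $S'$, hence maps the family of singular helices of $S$ onto that of $S'$. As the axis of a circular helix is determined by the helix, and $\ell$ (resp. $\ell'$) is the common axis of these helices, I conclude $A(\ell)=\ell'$. This is the step I expect to be the main obstacle: one must be sure that the screw axis is canonically attached to the surface, which is exactly why the surfaces of revolution ($r=0$) and the Dini helicoids ($\mu=1$) — for which the axis may fail to be unique or the cusp structure degenerates — were excluded, and why one appeals to the nonemptiness and coaxiality of the singular helices established in Lemmas 8 and 9.

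Granting $A(\ell)=\ell'$, the remaining invariants are manifestly preserved because $A$ is a Euclidean isometry carrying $\ell$ to $\ell'$. The wavelength $\mathfrak{w}$ is the minimal translation length of the pure-translation symmetry subgroup $\mathcal{T}$ along the axis; conjugation by $A$ sends $\mathcal{T}$ to the corresponding subgroup of $S'$ and preserves translation lengths, so $\mathfrak{w}=\mathfrak{w}'$. Likewise the pitch $\mathfrak{p}$, being the axial displacement produced by one full revolution of the screw-motion symmetry, is an axial length and is preserved, whence $\mathfrak{n}=\mathfrak{p}/\mathfrak{w}=\mathfrak{p}'/\mathfrak{w}'=\mathfrak{n}'$. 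The inner and outer radii $\mathfrak{r}^{-},\mathfrak{r}^{+}$ are the extremal values of the distance function $d(\,\cdot\,,\ell)$ on the surface, and since $A$ preserves distances to the axis it preserves both; hence the aspect ratio is unchanged, $\mathfrak{d}=\mathfrak{d}'$. Lastly, $A$ preserves the number of connected components of the singular set, which by Lemmas 8 and 9 (cf. the Remark on the number of singular helices) equals $\mathfrak{h}\,\mathfrak{n}$ — namely $\mathfrak{n}$ in the magnetic and $2\mathfrak{n}$ in the electric case; as $\mathfrak{n}$ is already preserved, so is $\mathfrak{h}$, and therefore the parity $\epsilon=(-1)^{\mathfrak{h}}$. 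Equivalently, $\epsilon=-1$ characterizes the magnetic type and $\epsilon=+1$ the electric type, and no congruence can interchange the two.

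Putting these together gives $(\epsilon,\mathfrak{n},\mathfrak{d})=(\epsilon',\mathfrak{n}',\mathfrak{d}')$, contradicting the hypothesis that the two triples differ; hence no such congruence $A$ can exist, which is exactly the assertion of the lemma. I would remark that the assumption of equal helicity serves only to keep the comparison within one handedness class: an orientation-reversing congruence reverses $\eta$, so once $S$ and $S'$ are assumed to have the same helicity any congruence between them is automatically orientation-preserving, but this plays no role beyond ensuring the statement concerns the reduced triple $(\epsilon,\mathfrak{n},\mathfrak{d})$.
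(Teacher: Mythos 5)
Your proposal is correct, and in substance it is the paper's argument, but reorganized as a direct invariance proof rather than the paper's three-case argument by contradiction. The paper's own proof runs: if $\epsilon\neq\epsilon'$ the fundamental domains of the planar profiles have different numbers of cusps; if $\epsilon=\epsilon'$ but $\mathfrak{n}\neq\mathfrak{n}'$ the singular loci have different numbers of connected components; and if $\epsilon=\epsilon'$, $\mathfrak{n}=\mathfrak{n}'$ but $\mathfrak{d}\neq\mathfrak{d}'$, then congruent columns ``must have the same pitch and the same inner and outer radii,'' whence $\mathfrak{w}=\mathfrak{w}'$ and then $\mathfrak{d}=\mathfrak{d}'$, a contradiction. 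That last quoted assertion is precisely the step the paper leaves implicit and which you supply: recovering the screw axis intrinsically as the common axis of the singular helices (nonempty, of positive radius and nonzero pitch by Lemmas 8 and 9, since $\Gamma\cap\ell=\emptyset$ and $r>0$), so that any congruence $A$ satisfies $A(\ell)=\ell'$ and therefore preserves $\mathfrak{p}$, $\mathfrak{w}$, and $\mathfrak{r}^{\pm}$; this buys you a cleaner logical order in which $\mathfrak{n}$ and $\mathfrak{d}$ are preserved outright. Your treatment of the parity also differs from the paper's: it compares cusp counts on fundamental domains of the planar profiles (after normalizing $S=S'$), while you count components of the singular set ($\mathfrak{h}\mathfrak{n}$ helices, per the remark on singular helices) and divide by the already-preserved $\mathfrak{n}$; both are valid, and yours avoids having to argue that the planar profile and its fundamental domain are canonically attached to the surface. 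One small point worth making explicit in your version: the claim that $\mathfrak{w}$ is the minimal length of a pure axial translation preserving $S$ uses that such a translation fixes $\ell$ and the half-plane $\mathcal{H}_{\ell}$, hence preserves $\Gamma=S\cap\mathcal{H}_{\ell}$ and so lies in $\mathcal{T}$ (here one checks $S\cap\mathcal{H}_{\ell}=\Gamma$ using the $\mathcal{T}$-invariance of $\Gamma$ and $\mathfrak{p}=\mathfrak{n}\mathfrak{w}$); with that noted, your argument is complete and, if anything, more self-contained than the paper's.
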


\begin{proof}
First, we show that if $\epsilon\neq \epsilon'$, then $S$ and $S'$ cannot be congruent to each other.
Suppose, on the contrary, that $S$ and $S'$ are congruent. By possibly acting with a rigid motion,
we may assume $S=S'$.
Thus, if $\Gamma_*$ is a fundamental domain of the planar profile of $S$, then $\Gamma_*$ must be
a fundamental domain of the planar profile of $S'$. On the other hand, if $\epsilon\neq \epsilon'$, the number
of cusps of the fundamental domains are different, which is a contradiction.
Next, suppose $\epsilon=\epsilon'$. If $\mathfrak{n}\neq \mathfrak{n}'$ and $\epsilon=\epsilon'$, then
the number of connected components of the singular loci of $S$ and $S'$ are different. Thus $S$ and $S'$
cannot be congruent.
We now prove that if $\epsilon=\epsilon'$ and $\mathfrak{n}=\mathfrak{n}'$, but
$\mathfrak{d}\neq \mathfrak{d}'$, then $S$ and $S'$ are not congruent.
Suppose not, then they must have the same pitch and the same inner and outer radii.
If $\mathfrak{p}=\mathfrak{p}'$ and $\mathfrak{n}=\mathfrak{n}'$, then $\mathfrak{w}=\mathfrak{w}'$.
Since they have the same wave numbers, parities, inner and outer radii, they must have also
the same aspect ratios, which is a contradiction.
\end{proof}

\begin{ex}\label{ex1}
We now examine the pseudospherical twisted columns of the golden-mean series
(that is, $\mathfrak{d}$ is equal to the golden ratio). The numerical evaluation of the parameters $(\mu,r)$
and the visualization have been done with the software \textsl{Mathematica}.
The following is the table of the approximate values of the parameters $(\mu,r)$ of the twisted columns
(magnetic type) with phenomenological invariants $(\epsilon,\mathfrak{n},\mathfrak{d})$
$=$ $(-1,n,\phi)$, $n=1,\dots, 6$,
\[
\begin{tabular}{|c|c|c|c|c|c|c|c}
  \hline
  % after \\: \hline or \cline{col1-col2} \cline{col3-col4} ...
 $n$ & 1 & 2 & 3 & 4 & 5 & 6 \\
  $\mu$ & 1.90951 & 2.03576 & 2.27181 & 2.65802 & 3.25281 & 4.13094 \\
  $r$ & 0.127237 & 0.247275 & 0.353348 & 0.439982 & 0.504218 & 0.546398 \\
  \hline
\end{tabular}
\]
The following is the table of the approximate values of the parameters $(\mu,r)$ of the twisted columns
(electric type) with phenomenological invariants $(\epsilon,\mathfrak{n},\mathfrak{d})$ $=$
$(1,n,\phi)$, $n=1,\dots, 6$,
\[
\begin{tabular}{|c|c|c|c|c|c|c|c}
  \hline
  % after \\: \hline or \cline{col1-col2} \cline{col3-col4} ...
 $n$ & 1 & 2 & 3 & 4 & 5 & 6 \\
  $\mu$ & 0.770862& 0.849115 & 0.884453 & 0.902344 & 0.912336 & 0.918378\\
  $r$ &  0.289255& 0.533287 & 0.749346 & 0.939799 &  1.1074 & 1.25549\\
  \hline
\end{tabular}
\]

Figure \ref{FIG9} reproduces pseudospherical twisted columns with invariants $(-1,1,\phi)$ and $(1,1,\phi)$,
respectively. The magnetic-type one has one singular helix while the electric-type one has two singular helices.
Figure \ref{FIG10} reproduces pseudospherical twisted columns with invariants $(-1,2,\phi)$ and $(1,2,\phi)$,
respectively.
The magnetic-type one has two singular helices while the electric-type one has four singular helices.
Figure \ref{FIG11} depicts pseudospherical twisted columns with invariants $(-1,3,\phi)$ and $(1,3,\phi)$,
respectively.
The magnetic-type one has three singular helices, while the electric-type one has six singular helices.
For the magnetic-type twisted columns we have chosen the right-handed orientation, while for the
electric-type ones we have chosen the left-handed orientation.
\end{ex}

\begin{figure}
\begin{center}
\includegraphics[height=6cm,width=6cm]{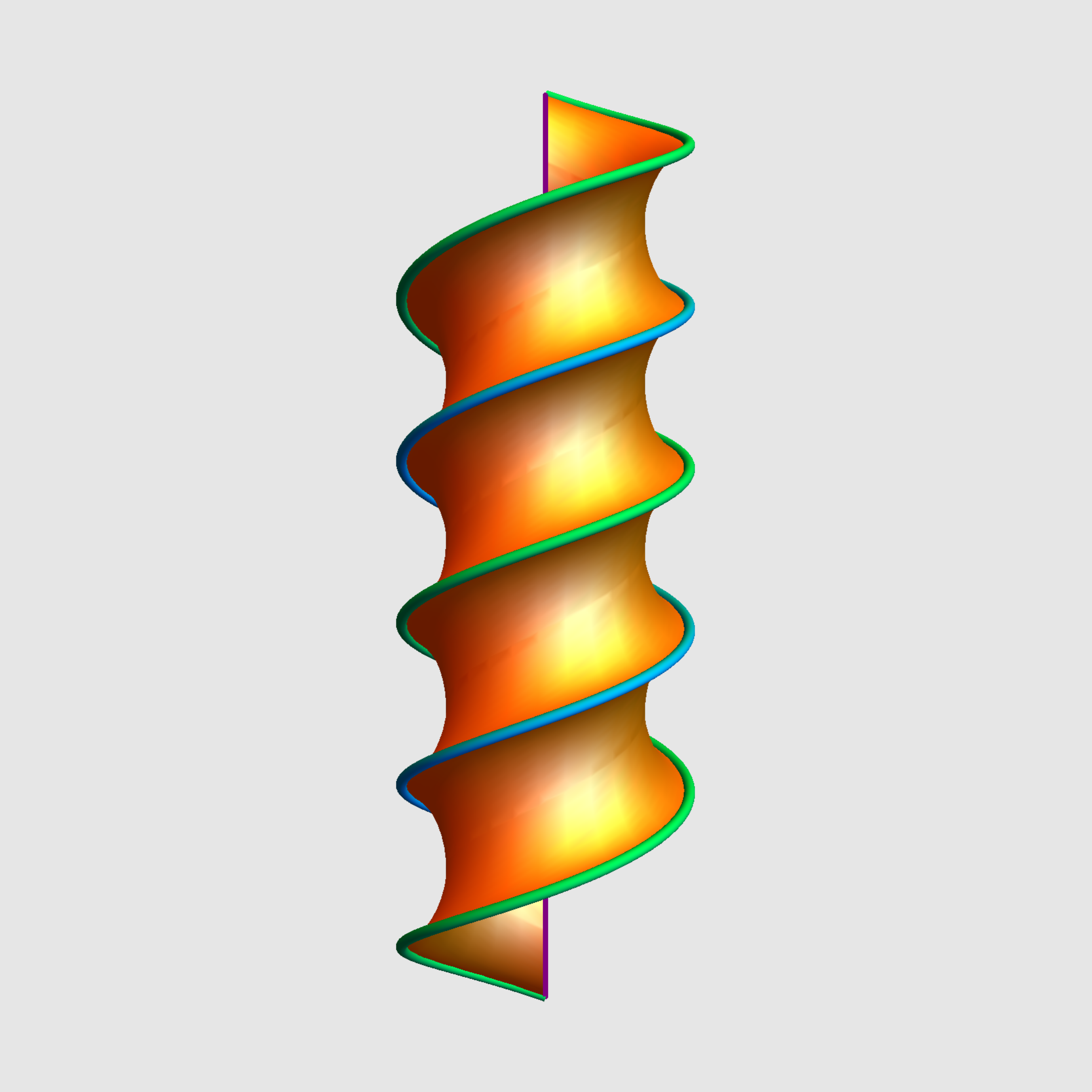}
\includegraphics[height=6cm,width=6cm]{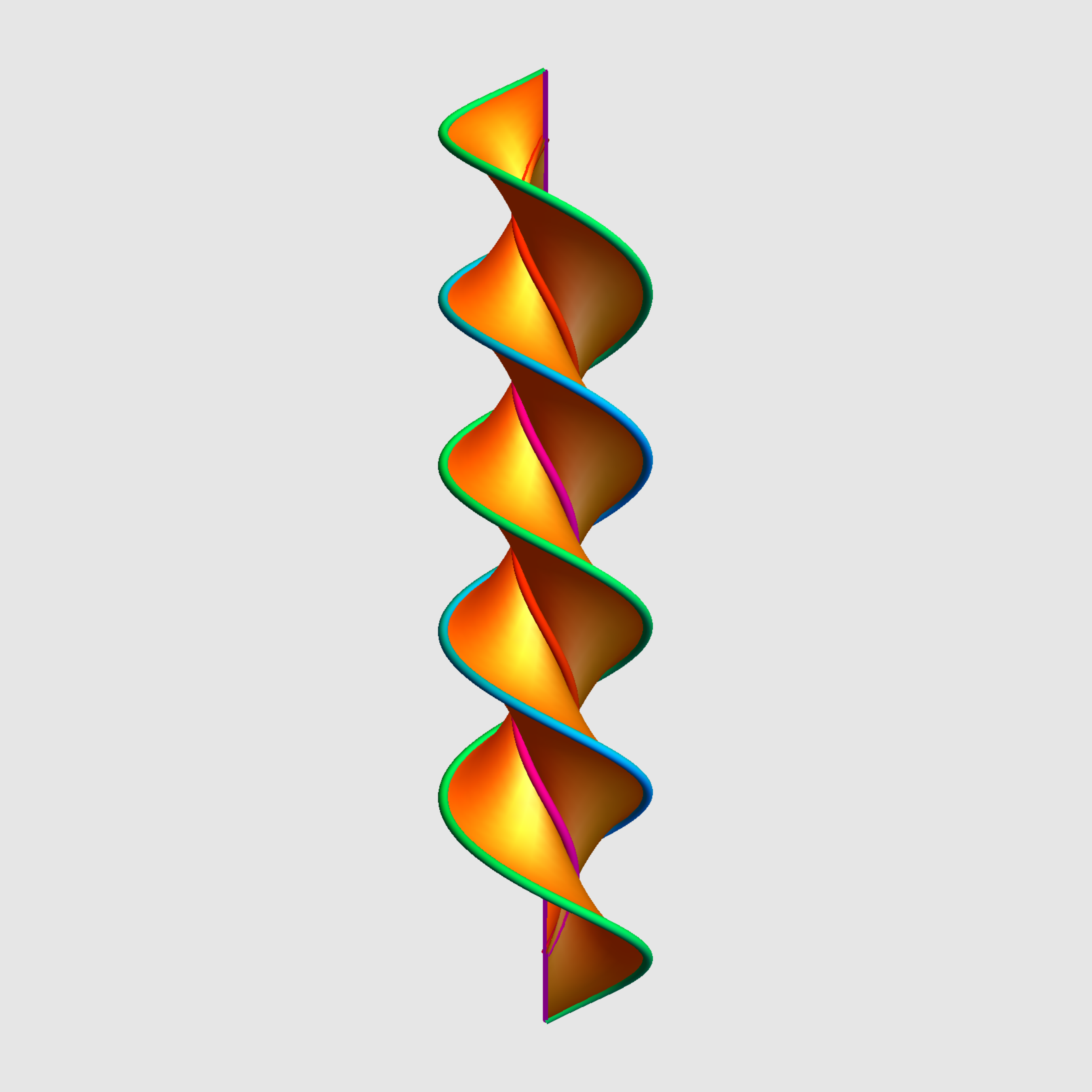}
\caption{Magnetic (left) and electric (right) pseudospherical twisted columns with $\mathfrak{n}=2$
and $\mathfrak{d}=\phi$, $\phi = \text{golden ratio}$.}\label{FIG10}
\end{center}
\end{figure}

\begin{figure}
\begin{center}
\includegraphics[height=6cm,width=6cm]{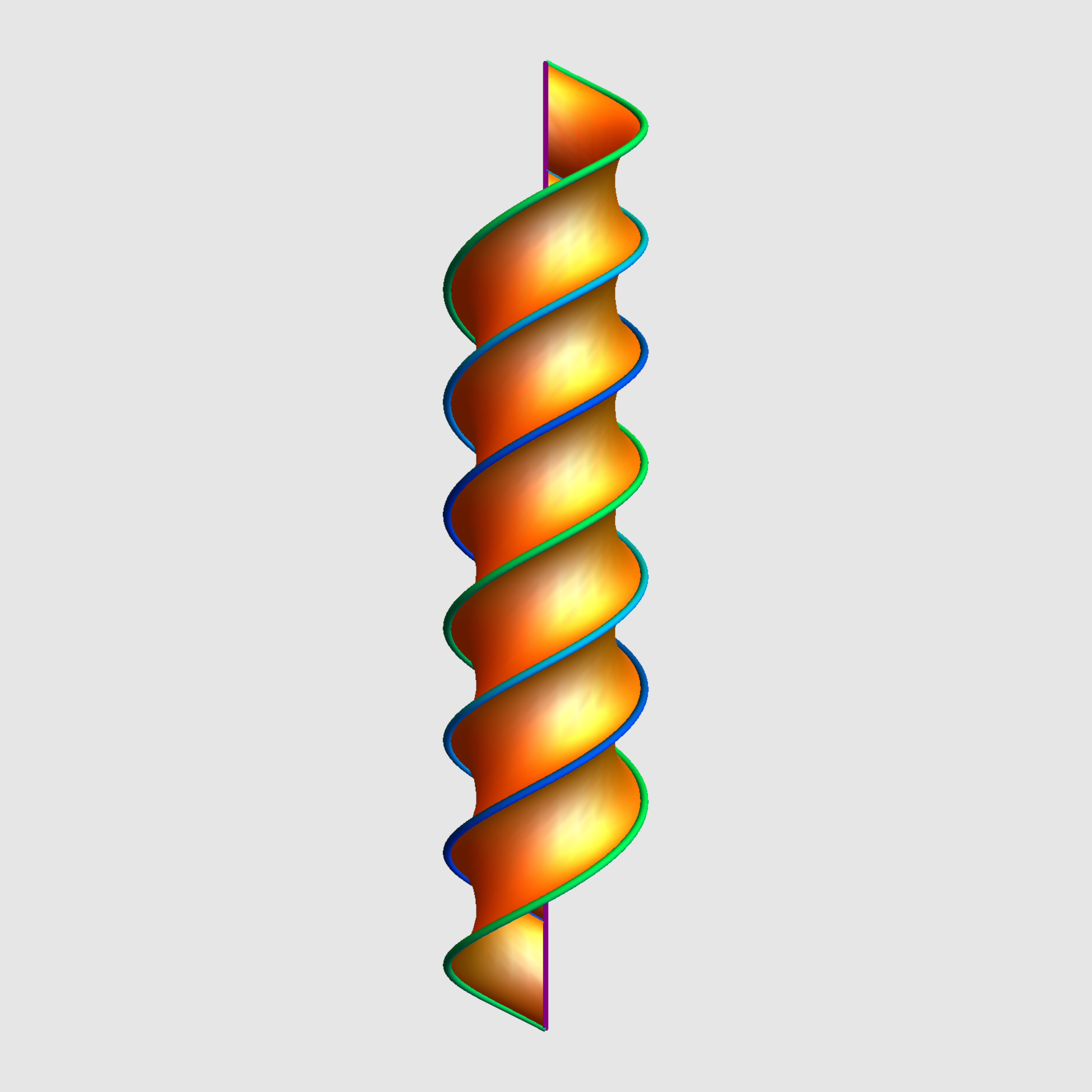}
\includegraphics[height=6cm,width=6cm]{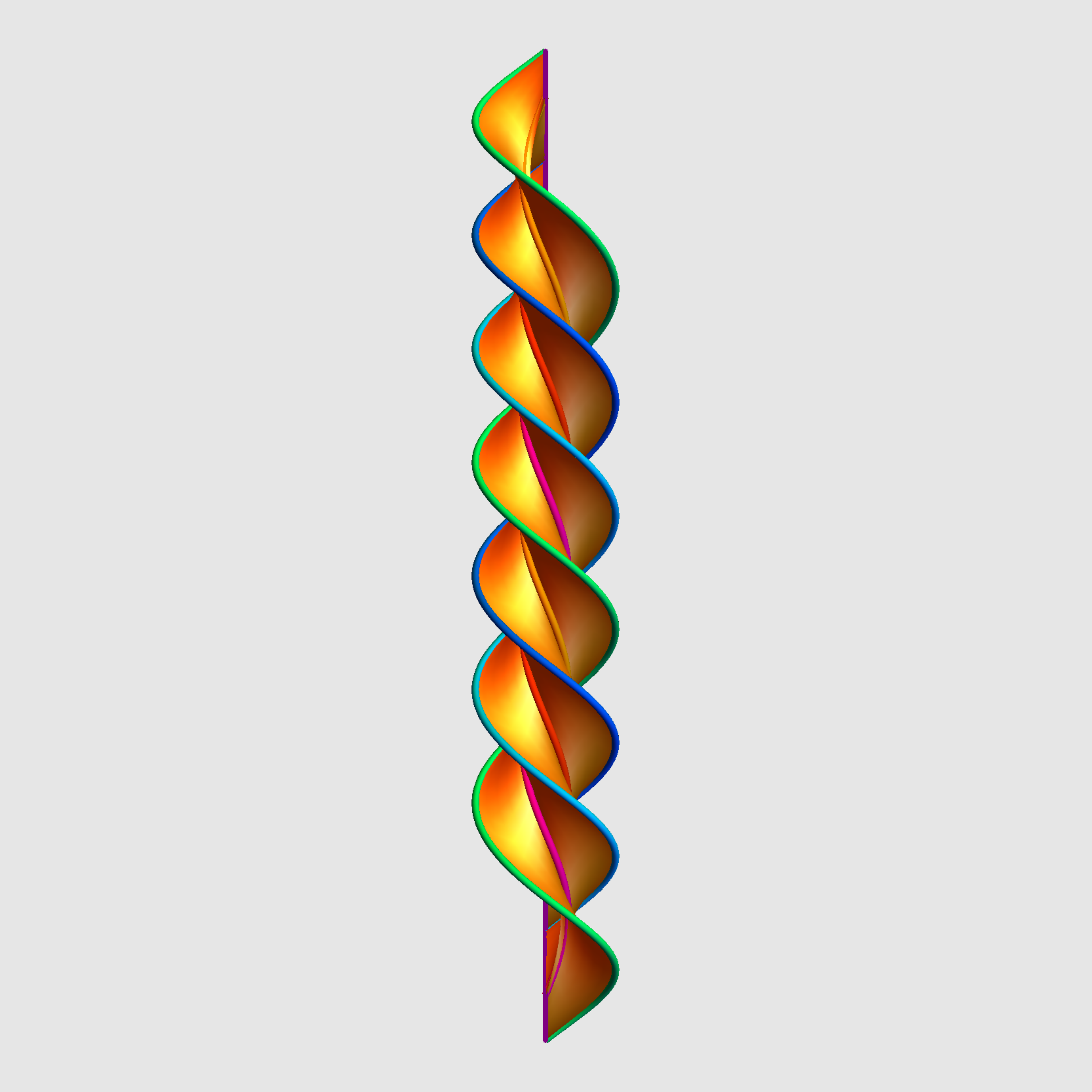}
\caption{Magnetic (left) and electric (right) pseudospherical twisted columns with $\mathfrak{n}=3$
and $\mathfrak{d} =\phi$, $\phi = \text{golden ratio}$.}\label{FIG11}
\end{center}
\end{figure}

%\appendix

%\section{Jacobian elliptic functions}

\bibliographystyle{amsalpha}

\end{document}